\documentclass[reqno,11pt,a4paper]{amsart}
%Miikan:
\usepackage{graphicx}
\newtheorem{theorem}{Theorem}
\newtheorem{lemma}[theorem]{Lemma}
\newtheorem*{lemma*}{Lemma}
\newtheorem{proposition}[theorem]{Proposition}

\usepackage[utf8]{inputenc}
\usepackage[english]{babel}
\usepackage{latexsym}
\usepackage{amssymb}
\usepackage{amsmath}

\addtolength{\textwidth}{2cm} \addtolength{\hoffset}{-1cm}  
%\addtolength{\textheight}{1cm} %\addtolength{\voffset}{-1cm}

%Woitekin:
%\usepackage[centertags]{amsmath}
%\usepackage{amsmath}
%\usepackage[dutch,british]{babel}
%\usepackage{amsfonts}
%\usepackage{amssymb}
%\usepackage{amsthm}
%\usepackage{newlfont}
%\usepackage{color}
%\usepackage{mathrsfs}
%\usepackage{srcltx}
%\usepackage{subfig}
% \usepackage{bbm}
%\usepackage{stmaryrd}
%\usepackage{mathrsfs}
%%\usepackage{pstricks,pst-node}
%\usepackage{palatino}  %palatcm - a missing package
%\usepackage{fancyhdr}
%\usepackage{graphicx}
%\usepackage{fancybox}
%\usepackage{multibox}
%\usepackage{geometry}
 %\usepackage{psfrag}
 %\usepackage{multirow}

% THEOREM-LIKE ENVIRONMENTS -----------------------------------------

%\theoremstyle{plain}
 % \newtheorem{theorem}{Theorem}[section]
%  \newtheorem{corollary}[theorem]{Corollary}
 % \newtheorem{proposition}[theorem]{Proposition}
%  \newtheorem{lemma}[theorem]{Lemma}
  \newtheorem{remark}[theorem]{Remark}

\theoremstyle{remark}

%\numberwithin{equation}{section}

% \MATHOPERATOR -----------------------------------------------------

\renewcommand{\Im}{\mathrm{Im}\,}

\newcommand\otimesal{\mathop{\hbox{\raise 1.6 ex
  \hbox{$\scriptscriptstyle\mathrm{al}$}
\kern -0.92 em \hbox{$\otimes$}}}}
\newcommand\oplusal{\mathop{\hbox{\raise 1.6 ex
  \hbox{$\scriptscriptstyle\mathrm{al}$}
\kern -0.92 em \hbox{$\oplus$}}}}
\newcommand\Gammal{\hbox{\raise 1.7 ex
\hbox{$\scriptscriptstyle\mathrm{al}$}\kern -0.50 em $\Gamma$}}

% GREEK - 2 letters ------------------------------------------------

\let\al=\alpha \let\be=\beta  \let\ep=\epsilon

  \let\ga=\gamma 
 \let\la=\lambda

 \let\Ga=\Gamma \let\La=\Lambda

% \MATHCAL - \ca ----------------------------------------------------

\newcommand{\caA}{{\mathcal A}}
\newcommand{\caB}{{\mathcal B}}
\newcommand{\caC}{{\mathcal C}}
\newcommand{\caD}{{\mathcal D}}
\newcommand{\caE}{{\mathcal E}}
\newcommand{\caF}{{\mathcal F}}
\newcommand{\caG}{{\mathcal G}}
\newcommand{\caH}{{\mathcal H}}

\newcommand{\caK}{{\mathcal K}}
\newcommand{\caL}{{\mathcal L}}

\newcommand{\caO}{{\mathcal O}}

\newcommand{\caR}{{\mathcal R}}
\newcommand{\caS}{{\mathcal S}}

\newcommand{\caV}{{\mathcal V}}
\newcommand{\caW}{{\mathcal W}}

% \Mathscr - \scr -------------------------------------------------

\newcommand{\scrG}{{\mathscr G}}

% \MATHBB - \bb -----------------------------------------------------

\newcommand{\bbC}{{\mathbb C}}

\newcommand{\bbE}{{\mathbb E}}

\newcommand{\bbP}{{\mathbb P}}

\newcommand{\bbR}{{\mathbb R}}

\newcommand{\bbT}{{\mathbb T}}

\newcommand{\bbZ}{{\mathbb Z}}

\newcommand{\opunit}{\text{1}\kern-0.22em\text{l}}

% \MATHFRAK - \fr ---------------------------------------------------

%\newcommand{\frq}{{\mathfrak q}}

\newcommand{\frz}{{\mathfrak z}}

% \BOLDSYMBOL - \bs -------------------------------------------------

\newcommand{\bss}{{\boldsymbol s}}

\newcommand{\bsu}{{\boldsymbol u}}

% ABBREVIATION ------------------------------------------------------

% MISCELLANEOUS -----------------------------------------------------

\newcommand{\non}{\nonumber}

% ROMAN FONTS------------------------------------------------------------

\newcommand{\beq}{ \begin{equation} }
\newcommand{\eeq}{ \end{equation} }
\newcommand{\bet}{ \begin{theorem} }
\newcommand{\eet}{ \end{theorem} }

\newcommand{\baq}{\begin{eqnarray}}
\newcommand{\eaq}{\end{eqnarray}}

\newcommand{\norm}{ \|}

 \newcounter{smallarabics}
\newenvironment{arabicenumerate}
{\begin{list}{{\normalfont\textrm{\arabic{smallarabics})}}}
  {\usecounter{smallarabics}\setlength{\itemindent}{0cm}
  \setlength{\leftmargin}{5ex}\setlength{\labelwidth}{4ex}
  \setlength{\topsep}{0.75\parsep}\setlength{\partopsep}{0ex}
   \setlength{\itemsep}{0ex}}}
{\end{list}}

\newcounter{smallroman}

\newcommand{\ben}{\begin{arabicenumerate}}
\newcommand{\een}{\end{arabicenumerate}}

\newcommand{\dist}{\mathrm{dist}}

%{{b}_{\mathrm{s} }}
%{{b}_{\mathrm{L} }}
%{{b}_{\mathrm{q} }} 
%{{b}_{\mathrm{a} }} 

\newcommand{\eqlaw}{\stackrel{d}{=}}

\newcommand{{\banone}}{\scrG} 
\newcommand{{\gamzero}}{\ga_0} 
\newcommand{{\tengam}}{10\gamzero} 
\newcommand{{\fifteengam}}{15\gamzero} 
\newcommand{{\twentygam}}{20\gamzero} 
\newcommand{{\normba}}{\norm^{}_{\banone}}

\newcommand{\hf}{{_1\over^2}}

\title{Renormalization Group and Stochastic PDE's}

\author[A. Kupiainen]{Antti Kupiainen}%$^{1,2,3}$}
\address{University of Helsinki, Department of Mathematics and Statistics,
         P.O. Box 68 , FIN-00014 University of Helsinki, Finland}
\email{antti.kupiainen@helsinki.fi}
\thanks{Supported by Academy of Finland}
\date{\today}

\begin{document}
\maketitle

\begin{abstract}
We develop a Renormalization Group (RG) approach to the study of existence and uniqueness of solutions to stochastic partial differential equations driven by space-time white noise. As an example we prove well-posedness and independence of regularization for the $\phi^4$ model in three dimensions recently studied by Hairer. Our method is "Wilsonian": the RG allows to construct effective equations on successive space time scales. Renormalization is needed to control the parameters in these equations. In particular no theory of multiplication of distributions enters our approach.
 \end{abstract}

%\begin{center}
%\large{ \bf{Dynamics of $\phi^4_3$} } \\
%\vspace{15pt} \normalsize

%{\bf   A. Kupiainen\footnote{
%email: {\tt    antti.kupiainen@helsinki.fi  }}  }\\
%\vspace{10pt} 
%{\it   Department of Mathematics \\
% University of Helsinki \\ 
%P.O. Box 68, FIN-00014,  Finland 
%} \\

%\end{center}

%\vspace{20pt} \footnotesize \noindent {\bf Abstract: }  
\section{Introduction}
Nonlinear parabolic PDE's driven by a space time decorrelated noise are ubiquitous in physics.
Examples are thermal noise in fluid flow, random deposition in surface growth and stochastic dynamics for spin systems and field theories. These equations are of the form
\beq
 \partial_t u=\Delta u+F(u)+\Xi %, \ \ \ \varphi(0)=\varphi_0
  \label{eq: upde}
 %\non
\eeq
where $u(t,x)$ is defined on $\La\subset\bbR^d$, $F(u)$ is a function of $u$ and possibly  its derivatives which can also be non-local  and $\Xi$ is white noise on $\bbR\times \La$,
formally
\beq
 \bbE\ \Xi(t',x')\Xi(t,x)=\delta(t'-t)\delta(x'-x).
  \label{eq: white}
 %\non
\eeq
Usually in these problems one is interested in the behavior of solutions in large time and/or long distances in space. In particular one is interested in stationary states
and their scaling properties. These can be studied with regularized versions of the equations where the
noise is replaced by a mollified version that is smooth in small scales. Often one expects the large scale
behavior is insensitive to such regularization. 

From the mathematical point of view and sometimes also from the physical
one it is of interest to inquire the short time short distance properties i.e. the well-posedness of the equations without regularizations. Then one is encountering the problem that the solutions are expected to  have
very weak regularity, they are distributions, and it is not clear how to set up the solution theory for
the nonlinear equations in distribution spaces. 

Recently this problem was addressed by Martin Hairer \cite{hairer} who set up a solution theory for a class
of such equations, including the KPZ equation in one spatial dimension and the nonlinear heat equation with cubic nonlinearity in three spatial dimensions. The latter case was also addressed by Catellier and Chouk \cite{CC} based on the theory of paracontrolled distributions developed in \cite{GIP}.  The class of equations discussed in these works are
subcritical in the sense that the nonlinearity vanishes in small scales in the scaling that preserves the
linear and noise terms in the equation. In physics terminology these equations are {\it superrenormalizable}. This means the following.  Let $\Xi_\ep$ be a mollified noise with
short scale cutoff $\ep$. One can write a formal series solution to the mollified version of eq. \eqref{eq: upde} by starting with the solution $\eta_\ep(t)=\eta_\ep(t,\cdot)$ of the linear ($F=0$) equation and iterating: 
\beq
 u_\ep(t)=\eta_\ep(t)+\int_0^te^{(t-s)\Delta}F(\eta_\ep(s))ds+\dots
% \non
\eeq
Typically, the random fields (apart from
$\eta$) occurring in this expansion have
no limits as $\ep\to 0$: even when tested by smooth functions  their variances blow up. These divergencies are familiar from quantum field theory (QFT). Indeed, the correlation functions of $u_\ep$ have expressions in terms of Feynman diagrams
and as in QFT the divergencies can be cancelled in this formal expansion by a adding to $F$
extra  $\epsilon$-dependent terms, so-called {\it counter terms}. In QFT there is a well
defined algorithm for doing this and in the superrenormalizable case rendering the first few terms in the
expansion finite cures the divergences in the whole expansion. Hairer's work can be seen as reformulating this
perturbative renormalization theory as a rigorous solution theory for the subcritical equations.
It should be stressed that \cite{hairer} goes further by treating also rough non-random forces.

In QFT there is another approach to renormalization pioneered by K. Wilson in the 60's \cite{wilson}.
In Wilson's approach adapted to the SPDE one would not try to solve  equation \eqref{eq: upde}, call
it $\caE$,  directly
but rather go scale by scale starting from the scale $\ep$ and deriving {\it effective} equations $\caE_n$ 
for
larger scales $2^n\ep:=\ep_n$, $n=1,2,\dots$. Going from scale $\ep_n$ to $\ep_{n+1}$ is a problem
with $\caO(1)$ cutoff when transformed to dimensionless variables. This problem can be studied by a standard Banach fixed point method. The possible singularities of the original problem are present in the large $n$ behavior of   the corresponding effective equation. One views $n\to\caE_n$ as a dynamical system and
attempts to find an initial condition at $n=0$ i.e. modify $\caE$ so that if we fix the scale $\ep_n=\ep'$ and then let  $\ep\to 0$ (and as a consequence $n\to\infty$) the
effective equation at scale $\ep'$ has a limit. It turns out that controlling this limit for the effective equations allows one then to control the solution to the original equation \eqref{eq: upde}.

In this paper we carry out Wilson's renormalization group analysis to the cubic nonlinear heat equation
in three dimensions. This equation is a good test case since its renormalization is non trivial in the sense that a simple Wick ordering of the nonlinearity is not sufficient. Our analysis is robust in the sense that it
works for other subcritical cases like the KPZ equation. We prove almost sure local well-posedness for the mild (integral equation)  version of  \eqref{eq: upde} thereby recovering the results in \cite{hairer}. Our renormalization group method
is a combination of the one developed in \cite{bk} for parabolic PDE's and the one in \cite{bgk}
used for KAM theory.  A similar scale decomposition appears also in \cite{Unter}.

The content of the paper is as follows. In section 2 we define the model and state the result. The
RG formalism is set up in a heuristic fashion in Sections 3 and 4. Sections 5 and 6 discuss the leading perturbative solution and set up the fixed point problem for the remainder. Section 7 states the estimates
for the perturbative noise contributions and in Section 8 the functional spaces for RG are
defined and the fixed point problem solved.  The main result is proved in Section 9. Finally in Sections 10 and 11 estimates for the covariances of the various noise contributions are proved.

\section{The $\varphi_3^4$ model}
Let $\Xi(t,x)$ be space time white noise on $x\in\bbT^3$ i.e.  $\Xi=\dot\beta$ with $\beta(t,x)$ Brownian in time and white noise in space.
Given a realization of the noise $\Xi$ we want to make sense and solve the equation
\beq
 \partial_t\varphi=\Delta\varphi-\varphi^3-r\varphi+\Xi, \ \ \ \varphi(0)=\varphi_0 \label{eq: phi4pde}
% \non
\eeq
on some time interval $[0,\tau]$ and show $\tau>0$ almost surely. 

Due to the nonlinearity the equation \eqref{eq: phi4pde} is not well defined. We need to
define it through regularization. To do this we first formally write  it
in its integral equation form
\beq
\varphi=G(-\varphi^3-r\varphi+\Xi ) +e^{t\Delta}\varphi_0  \label{eq:inteq }
\eeq
where
\beq
(Gf)(t)=\int_0^te^{(t-s)\Delta}f(s)ds.   %\label{eq: }
\non
\eeq
(for  $f=\Xi$ this stands for  $
(G\xi)(t)=\int_0^te^{(t-s)\Delta}d\beta(s) $ ).  Next, introduce a regularization parameter $\ep>0$ and define 
\beq
  (G_\epsilon f)(t)=\int_0^t(1-\chi((t-s)/\ep^2))e^{(t-s)\Delta}f(s)ds .%=\chi_\epsilon\star G %
  \label{eq: Geps}
\eeq
where %$\chi_\epsilon(x)=\epsilon^{-3} \chi(x/\epsilon)$ and
$\chi\geq 0$ is a smooth bump, $\chi(t)=1$ for $t\in[0,1]$ and $\chi(t)=0$ for $t\in[2,\infty)$.
The  regularization of  \eqref{eq:inteq } with $\varphi_0=0$ is then defined to be 
\beq
\varphi=G_\epsilon(-\varphi^3-r_\epsilon\varphi+\Xi ) .%(1-\chi(t/\ep^2))
%+e^{t\Delta}\varphi_\ep 
 \label{eq: regpde }
%\non
\eeq
%where $\varphi_\ep$ is some regularization of the initial data. 
We look for $r_\epsilon$ such that  \eqref{eq: regpde } has a unique solution  $\varphi^{(\ep)}$
which converges as $\epsilon\to 0$ to a non trivial limit. Note that since only $t-s\geq \ep^2$
contribute in \eqref{eq: Geps} $G_\epsilon\Xi$ is a.s. smooth. %Also, for $t<\epsilon^2$ %there is no nonlinearity or noise:
%$\varphi(t)
%\equiv 0$. %=e^{t\Delta}\varphi_0$.
 
 Our main result is

\begin{theorem}\label{main result}  There exits $r_\ep$ s.t. the following holds. For almost all realizations of the white noise $\Xi$ there exists  $t(\Xi)>0$  such that the equation \eqref{eq: regpde } % with $\varphi_0=0$ or $\varphi_0=\eta_0$ 
has for all $\ep>0$ a unique smooth solution $\varphi^{(\ep)}(t,x)$,
$t\in [0,t(\Xi)]$ and there exists $\varphi\in \caD'( [0,t(\Xi)]\times \bbT^3)$ such that
$\varphi^{(\ep)}\to\varphi$ in  $\caD'( [0,t(\Xi)]\times \bbT^3)$. The limit $\varphi$ is independent
of the regularization $\chi$. 
\end{theorem}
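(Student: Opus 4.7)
The plan is to implement the Wilsonian program sketched in the introduction. First, decompose $G_\ep$ as a sum $\sum_n G^{(n)}$ of kernels supported on space-time scales $\ep_n := 2^n\ep$, iterated until $\ep_n$ is of order one. The solution is sought in the form $\varphi = \eta + \Phi + v$, where $\eta = G_\ep\Xi$ is the stochastic convolution, $\Phi$ is a finite sum of iterated stochastic convolutions of Wick-ordered polynomials in $\eta$ chosen to absorb the two divergent Feynman sub-graphs of $\varphi_3^4$ (the mass divergence $\sim\ep^{-1}$ cured by Wick-ordering the cube, and the logarithmic ``sunset''), and $v$ is an unknown remainder. The covariance bounds announced for Sections 10--11 guarantee that after these subtractions $\eta$ and the components of $\Phi$ converge, as $\ep\to 0$, to well-defined random distributions in suitable negative Hölder-Besov spaces, with norms that are almost surely finite on any fixed time interval.

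Substituting this ansatz into \eqref{eq: regpde } and matching terms forces the counterterm to have the shape $r_\ep = a\ep^{-1} + b\log\ep^{-1} + \text{finite}$, and leaves the remainder satisfying a fixed-point equation
\[
v = G_\ep \caN(v;\eta,\Phi) + w_\ep,
\]
where $w_\ep$ depends only on the already-renormalized random fields and is therefore uniformly bounded in $\ep$, while $\caN$ is a polynomial of degree at most three in $v$ whose coefficients are distributional products involving $\eta$, $:\!\eta^2\!:$ and components of $\Phi$. Read naively these products are ill-defined, and this is precisely where the Wilsonian machinery replaces a distributional multiplication theory.

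Next, introduce Banach spaces $\caB_n$ adapted to scale $\ep_n$, rescaled to dimensionless variables so that the ultraviolet cutoff is of order one. An element of $\caB_n$ records the large-scale part of $v$ together with a small set of effective parameters corresponding to the relevant and marginal operators (essentially a mass-like coefficient and the cubic coupling). Define a one-step RG map $\caR_n:\caB_n\to\caB_{n+1}$ by: (i) inserting the contribution of scales in $[\ep_n,\ep_{n+1}]$ via $G^{(n)}$ and re-expanding $\caN$ multiplicatively; (ii) absorbing any new Wick-ordered noise polynomial into an update of $\Phi$ and $w_\ep$, and any ultraviolet-divergent constant into a shift of the mass parameter; (iii) solving an $\caO(1)$ Banach fixed-point problem on the shell $[\ep_n,\ep_{n+1}]$; and (iv) rescaling by a factor of two. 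Subcriticality of $\varphi_3^4$ (the cubic nonlinearity is RG-irrelevant in three dimensions) makes the linearization of $\caR_n$ contractive on the remainder, while choosing the counterterm $r_\ep$ correctly keeps the mass direction bounded under iteration. This is the direct analogue of the parabolic RG of \cite{bk} combined with the bookkeeping of \cite{bgk}.

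Assembling the iterates produces $\varphi^{(\ep)} = \eta+\Phi+v$ on a time interval $[0,t(\Xi)]$ whose length is controlled by the almost surely finite renormalized noise norms, giving the claimed $t(\Xi)>0$. The limit $\ep\to 0$ follows from continuity of the RG iteration in its initial data together with the convergence of the renormalized noise fields from Sections 10--11; since the $\chi$-dependence of $r_\ep$ lives only in the explicit divergent constants while the renormalized limits do not depend on $\chi$, the resulting $\varphi\in\caD'([0,t(\Xi)]\times\bbT^3)$ is independent of the regularizer. The principal obstacle I anticipate is step (iii): designing the norms on $\caB_n$ so that the polynomial coefficients of $\caN$ (which involve an increasing catalogue of stochastic objects and grow in $n$) are dominated by the gains from RG rescaling \emph{uniformly} in the scale index, and so that the contraction constant is independent of $n$ up to $\ep_N=\caO(1)$. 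Everything else in the scheme is, in principle, standard Banach-space fixed point analysis once the right hierarchy of norms is identified.
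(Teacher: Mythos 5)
Your overall philosophy (scale-by-scale effective description, counterterm of the shape $a\ep^{-1}+b\log\ep^{-1}+\mathrm{finite}$, contraction for the part beyond second order) matches the paper, but the core of your scheme has a genuine gap. You superimpose a global ansatz $\varphi=\eta+\Phi+v$ with $\Phi$ a fixed finite collection of Wick/iterated stochastic convolutions and $v$ a remainder in negative H\"older--Besov spaces, and you then hope that a shell-by-shell RG map acting on ``the large-scale part of $v$ together with a small set of effective parameters'' will give meaning to the products $:\!\eta^2\!:v$, $\eta\,\Phi\,v$, etc.\ appearing in $\caN$. But you never supply the mechanism by which these products become well defined, and this is precisely the difficulty the whole construction must resolve (it is why, along the global-ansatz route, one needs regularity structures or paracontrolled calculus in $d=3$). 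The paper avoids the problem by \emph{not} decomposing the solution globally at all: it flows the effective \emph{equation} itself, i.e.\ the nonlinearity $v^{(N)}_n(\phi)$ as an analytic function on a ball $B_n$ of \emph{smooth} fields $\phi\in\Phi_n$ (a $C^{2,4}$-type space), with values in the noise space $\caV_n$. At every step the noise enters the argument only through the infinitely smoothing kernel $\Gamma$ (and $G_1$), so the one-step fixed point \eqref{eq: vn+1new1} is a genuinely $\caO(1)$ problem in which only ``smooth function times $\caV_n$-element'' multiplications occur (Lemma \ref{lem: gammamap}(c)); the solution is reconstructed only at the end through the composition maps $f^{(N)}_n$ and the localization $h_{n-m}$, with the existence time coming from the probabilistic event $\caA_m$ and Borel--Cantelli. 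Your step (iii), which you yourself flag as the principal obstacle, is exactly the place where your version of the scheme is missing this idea and therefore does not close as stated.

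Two further concrete inaccuracies. First, the relevant/marginal data that must be tracked are not ``a mass-like coefficient and the cubic coupling'' as numerical parameters: at second order the marginal directions are \emph{random, space-time dependent} coefficients ($U_n(0)=\la^{2n}\omega_n$, the local field $\frz_n$ and the bilocal kernel $z_n$ in \eqref{eq:DU_n(0) }), and the whole point of the spaces $\caV_n$ and the bounds \eqref{eq: amevent1}--\eqref{eq: amevent2} is to control these field-valued directions uniformly in $n\le N$; a finite-dimensional parameter flow would not capture them. Second, in the paper's power counting the cubic term is not ``RG-irrelevant'': $\phi^k$ scales by $\la^{(k-5)/2}$, so $\phi^3$ is relevant as an operator, and it is only the explicit coupling $\la^n$ that renders $\la^n\phi^3$ marginal (Remark \ref{rem: relevant}); what contracts is the remainder of order $\la^{3n}$, i.e.\ $\nu_n$ in \eqref{eq: wfp}, after the first- and second-order parts $u_n$ and $U_n$ have been extracted and renormalized. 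Subcriticality gives you smallness of the nonlinearity at the initial (smallest) scale, not irrelevance along the iteration towards larger scales, and the quantitative bookkeeping of these orders in $\la^n$ is what makes the contraction uniform in the scale index --- the very point your sketch defers.
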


\begin{remark}\label{rem: renorm} 
We will find that the renormalization parameter is given by
\beq
r_\epsilon=r+m_1\epsilon^{-1}+m_2\log\epsilon +m_3  \label{eq: reps}
\eeq
where the constants $m_1$ and $m_3$ depend on $\chi$ whereas the  $m_2$ is universal
i.e. independent on  $\chi$.
They of course agree with the mass renormalization needed to make sense  of the
formal stationary measure of  \eqref{eq: phi4pde}
\beq
\mu(d\phi)=e^{-\frac{1}{4}\int_{\bbT^3}\phi(x)^4 }%+\hf r\phi(x)^2)}
\nu(d\phi)  \label{eq: statmeas}
\eeq  
where $\nu$ is Gaussian measure with covariance $(-\Delta+r)^{-1}$ \cite{G, EO, GJ,  F, FO}. 
\end{remark}

\begin{remark}\label{rem: initial cond} 

This
 result  can be extended   a large class of initial conditions, deterministic or random.
%For deterministic ones one regularizes $e^{t\Delta}\varphi_0$ in   \eqref{eq:inteq }
%and treats it as a mean of
 As an example we consider the random case where $\phi_0=\eta_0$ where $\eta_0$ is the gaussian random
field on $\bbT^3$ with covariance $-\hf\Delta^{-1}$, independent of $\Xi$ (this is the stationary state of the linear equation). Then
Theorem \ref{main result} holds a.s. in the initial condition and $\Xi$,
see Remark \ref{rem: other initial}.
\end{remark}

\begin{remark}\label{rem: cutoff} One can as well introduce the short scale cutoff only to the spatial dependence of the noise by replacing $\Xi$ with  $\Xi_\ep:=\rho_\ep\star \Xi$ where $\rho_\ep(x)=\ep^{-3}\rho(x/\ep)$ with $\rho$ smooth nonnegative compactly supported bump integrating to one. Then the regularized equation is
\beq
\varphi=G(-\varphi^3-r_\epsilon\varphi+\Xi_\ep ) .%(1-\chi(t/\ep^2))
%+e^{t\Delta}\varphi_\ep 
 \label{eq: regpdespace }
%\non
\eeq
See Remark \ref{rem: cutoff1} for more discussion. 
This cutoff  has the advantage that \eqref{eq: regpdespace } represents a regular (Stochastic) PDE. 
\end{remark}

\section{Effective equation}

%For simplicity we start by the case $\varphi_0=0$ i.e. c
Consider the cutoff problem
\beq
\varphi=G_\epsilon (V(\varphi)+\Xi) \label{eq:epseq }
\eeq
for $\varphi(t,x)$ on $(t,x)\in [0,\tau]\times \bbT^3$ with 
$$V(\varphi)(t,x)=-\varphi^3(t,x)-r_\epsilon\varphi(t,x).$$

Let us attempt increasing the  cutoff $\epsilon$ to $\epsilon'>\epsilon$ 
by   solving the equation \eqref{eq:epseq } for scales between $\epsilon$ and $\epsilon'$.  
To do this split %Let $\la<1$ and split
\beq
G_\epsilon= G_{\epsilon'}+\Gamma_{\epsilon,\ep'}
%G_{\epsilon/\lambda}+\Gamma_{\epsilon,\la}
 %\label{eq: }
 \non
\eeq
with
\beq
(\Gamma_{\epsilon,\ep'}  f)(t)=\int_0^t(\chi((t-s)/{\ep'}^2)-\chi((t-s)/\ep^2))e^{(t-s)\Delta}f(s)ds. %\label{eq: }
 \non
\eeq
Thus $\Gamma_{\epsilon,\ep'}$ involves temporal scales between  $\epsilon^2$ and ${\epsilon'}^2$ (and due to the heat kernel spatial scales  between $\epsilon$ and $\epsilon'$).  Next, write
\beq
\varphi=\varphi'+Z \label{eq:phi'z }
 %\non
\eeq
and determine $Z=Z(\varphi')$ as a function of $\varphi'$ by solving the small scale equation
\beq
Z= \Gamma_{\epsilon,\ep'} (V(\varphi'+Z)+\Xi). \label{eq: Zeq}
\eeq
Eq.  \eqref{eq:epseq } will then hold provided
 $\varphi'$ is a solution to the "renormalized" equation
\beq
\varphi'= G_{\epsilon'}(V'(\varphi')+\Xi)\label{eq:reneq }
\eeq
where
\beq
V'(\varphi')= V(\varphi'+Z(\varphi')).%\label{eq: }
 \non
\eeq
Eq. \eqref{eq:reneq } is of the same form as  \eqref{eq:epseq } except that the cutoff has increased and $V$ is replaced by $V'$. Combining \eqref{eq: Zeq} and  \eqref{eq:reneq }
we see that the new $V'$ can be obtained by solving a fixed point equation
\beq
%V'(\phi')= V(\varphi'+\Gamma_{\epsilon,\ep'} (V'(\varphi')+\Xi)).%\label{eq: }
V'= V(\cdot+\Gamma_{\epsilon,\ep'} (V'+\Xi)).\label{eq:fixsdpoint }
% \non
\eeq
Finally, the solution of \eqref{eq:epseq } is gotten from \eqref{eq:phi'z } as
\beq
\varphi=\varphi'+\Gamma_{\epsilon,\ep'} (V'(\varphi')+\Xi) \label{eq:F'def }
 %\non
\eeq

Our aim is  to study the flow of the {\it effective equation $V'$ at scale} $\ep'$ as $\ep'$ increases from $\ep$ to $\tau^\hf$ where $[0,\tau]$ is the
time interval where we try to solve the original equation. It will be convenient to do this step by step. 
We fix a number $\la<1$ (taken to be small in the proof to kill numerical constants) and take the cutoff scale
\beq
\epsilon=\la^{N}.\label{eq:epsilonn }
\eeq 
The corresponding $V$ is denoted as
\beq
V^{(N)}(\varphi)=-\varphi^3-r_{\la^N}\varphi
.\label{eq:VN }
% \non
\eeq

Let $V^{(N)}_n$ be the solution of \eqref{eq:fixsdpoint } with $\ep'=\la^n$. We will construct these functions iteratively in $n$ i.e. derive the effective equation on scale $\la^{n-1}$
from that of $\la^n$:
\beq
V^{(N)}_{n-1}= V^{(N)}_n(\cdot+\Gamma_{\la^n,\la^{n-1}} (V^{(N)}_{n-1}+\Xi)).\label{eq:fixsdpointn }
% \non
\eeq
%$Z$ is the contribution to the solution of \eqref{eq:epseq } from spatial scales between $\ep$ and $\ep/\la$ and temporal scales  between $\ep^2$ and $\ep^2/\la^2$. The large scale field $\varphi'$ satisfies then the renormalized equation \eqref{eq:reneq } and we want to study the flow of $V'$ as $\la$ decreases.
The solution of \eqref{eq:epseq } is then also constructed iteratively: let
 \beq
F^{(N)}_N(\varphi):=\varphi%\label{eq: }
\non
\eeq 
and define
 \beq
F^{(N)}_{n-1}=F^{(N)}_n(\cdot+\Gamma_{\la^n,\la^{n-1}} (V^{(N)}_{n-1}+\Xi)) .\label{eq: Fnite}
\eeq 
Then the solution of \eqref{eq:epseq } is
 \beq
\varphi=F^{(N)}_n(\varphi_n) .%\label{eq: }
\non
\eeq 
where $\varphi_n$ solves
\beq
\varphi_n= G_{\la^n}(V^{(N)}_n(\varphi_n)+\Xi).\label{eq:phineq }
\eeq
Finally we want to control the limit $N\to\infty$ where the regularization is removed i.e.
construct the limits $V_n=\lim_{N\to\infty}V^{(N)}_n$ and $F_n=\lim_{N\to\infty}F^{(N)}_n$
which are then shown to describe the solution of \eqref{eq: phi4pde} on time interval
$[0,\la^{2n}]$. How long this time interval will be i.e. how small $n$ we can reach in the
iteration depends on the realization of the noise. In informal terms, let $A_m$ be the event that
these limits exist for $n\geq m$. We will show
 \beq
\bbP(A_m)\geq 1-\caO(\la^{Rm}).\label{eq:probaest1 }
\eeq
with large $R$ i.e. the set of noise s.t. the equation \eqref{eq: phi4pde} is well posed on time interval
$[0,\tau]$ has probability $1-\caO(\tau^{R})$. %For a precise statement see Theorem \ref{}.

\section{Renormalization group}

The equation \eqref{eq:fixsdpointn } deals with scales between $\la^n$ and $\la^{n-1}$.
 Instead of letting the scale of the equations vary it will be be more convenient to rescale everything to fixed scale (of order unity) after which we need to iterate a $\caO(1)$-scale
problem. This is the "Wilsonian" approach to Renormalization Group.
 
Let us define the space time scaling $s_\mu$ by
\beq
 (s_\mu f)(t,x)=\mu^\hf f(\mu^2t,\mu x).%\label{eq: }
 \non
\eeq
The Green function of the heat equation and the space time white noise transform
in a simple way under this scaling: 
\beq
s_\mu Gs_\mu^{-1} =\mu^2G,\ \ \ s_\mu G_\ep s_\mu^{-1} =\mu^2G_{\ep/\mu}, \ \ \ s_\mu \Xi \eqlaw \mu^{-2}\Xi_\mu %\label{eq: }
 \non
\eeq
as one can easily verify by a simple changes of variables. Here $\Xi_\mu$ is space time white noise on $\bbR\times \mu^{-1}\bbT^3$. Set 
\beq
\phi_{n}=s_{\la^n}\varphi_n \label{eq: phindef}
%\label{eq: }
\eeq
where $\varphi_n$ solves \eqref{eq:phineq }.
Note that $\phi_{n}(t,x)$ is defined on $x\in   \bbT_n$ with
\beq
\bbT_n:= \la^{-n}\bbT^3%\Lambda_n:=[0,\tau_n]\times\bbT_n^d,\ \ \tau_n:=\tau/\epsilon_n^2,\ \ \ \bbT_n=\epsilon_n^{-1}\bbT
\label{eq: Lambdan}
\eeq
Since $\varphi_n$ was interpreted as a field involving spatial scales on $[\la^n,1]$ 
 $\phi_n$ may be thought   as a field involving scales on $[1,\la^{-n}]$.  
 
In these new variables eq. \eqref{eq:phineq } becomes
\beq
 \label{eq:neweq1 }
\phi_n= G_1 (v^{(N)}_n(\phi_n)+\xi_n)%\non
\eeq
provided we define 
\beq
v^{(N)}_n:=\la^{2n}s_{\la^{n}}\circ V^{(N)}_n\circ  s_{\la^n}^{-1}%\label{eq: }
 \non
\eeq
and
\beq
\xi_n:=\la^{2n}s_{\la^n}\Xi.
%\label{eq: }
 \non
\eeq
$\xi_n$ is distributed as a space time white noise on $\bbR\times\bbT_n$. Defining
\beq
f^{(N)}_n:=s_{\la^{n}}\circ 
F^{(N)}_n\circ s_{\la^{n}}^{-1}%\label{eq: }
 \non
\eeq
the solution of \eqref{eq:epseq }  is given by
 \beq
%s_{\ep_n}
\varphi=s_{\la^{n}}^{-1}f^{(N)}_n(\phi_n) .\label{eq: finalsolution}
%\non
\eeq 
%Letting $\tau=\la^{2N}$ we then have $\tau_N=1$ and
%\beq
%\varphi=f_N(\phi_N) .\label{eq: finalsolution}
%\eeq 

The iterative equations  \eqref{eq:fixsdpointn } and \eqref{eq: Fnite} have their
unit scale counterparts: 
recalling \eqref{eq:epsilonn }, denoting $s:=s_\la$ and using $s_{\la^{n+1}}=s\circ s_{\la^n}$ equation \eqref{eq:fixsdpointn } becomes
\beq
v^{(N)}_{n-1}(\phi)=\la^{-2}s^{-1}v^{(N)}_n(s(\phi+\Gamma_{}(v^{(N)}_{n-1}(\phi)+\xi_{n-1})))% :=\caF_n(v^{(N)}_n,v^{(N)}_{n+1})
 \label{eq: vn+1new}
%\label{eq: }
\eeq
where 
\beq
(\Gamma  f)(t)=\int_0^t(\chi(t-s)-\chi((t-s)/\la^2))e^{(t-s)\Delta}f(s)ds.
\label{eq:Gala }
\eeq
Note the integrand is supported on  the interval $[\la^2,2]$.
%\beq
%\chi_\la(s)=\chi(s)-\chi(s/\la^2)\label{eq:chila }
%\eeq is a smooth function supported in the interval $[\la^2,2]$.

Eq.  \eqref{eq: Fnite} in turn becomes
\beq
f^{(N)}_{n-1}(\phi)=s^{-1}
f^{(N)}_n(s(\phi+\Gamma_{}(v^{(N)}_{n-1}(\phi)+\xi_{n-1})))  \label{eq: fn+1new}.
%\label{eq: }
\eeq
%Note that eq. \eqref{eq: vn+1new} is an equation for $v
Our task then is to solve the equation  \eqref{eq: vn+1new}  for $v^{(N)}_{n-1}$ to obtain the RG map
\beq
v^{(N)}_{n-1}=\caR_nv^{(N)}_n \label{eq: Rn}
%\label{eq: }
\eeq
and then 
iterate this    and \eqref{eq: fn+1new}
starting with
\baq
 v^{(N)}_N(\phi)&=&-\la^N\phi^3-\la^{2N}r_{\la^N}  \phi=-\la^N\phi^3-(\la^{N}m_1+\la^{2N}(m_2\log\la^N+m_3))\phi
 \label{eq: first v}\\
  f^{(N)}_N(\phi)&=&\phi\label{eq: first f}
\eaq
If the noise is in the set $A_m$ (to be defined) we then show the functions $v^{(N)}_{n}$ and  $f^{(N)}_{n}$ 
have limits as $N\to\infty$ and $n\geq m$ and allow to construct the solution to our original equation
on time interval $[0,\la^{2m}]$.

Equations  \eqref{eq: vn+1new}, \eqref{eq: fn+1new} and  \eqref{eq:neweq1 } involve the operators $\Gamma$ and  $G_1$ respectively. These operators are  infinitely smoothing and their kernels have fast decay in space time.
  In particular  the noise $\zeta=\Gamma\xi_n$ entering equations  \eqref{eq: vn+1new} and \eqref{eq: fn+1new} has a smooth covariance which is
short range in time
\beq
\bbE \zeta(t,x)\zeta(s,y)= 0\ \ \ {if} \ \ |t-s|>2\la^{-2},\label{eq: F1}
\eeq
and it has gaussian decay in space. Hence the fixed point problem  \eqref{eq: vn+1new} turns out to be quite easy.

As usual in RG studies one needs to keep track of the leading "relevant" terms of
$v^{(N)}_n$ which are revealed by a first and second order perturbative study of   \eqref{eq: vn+1new} to which we turn now.
%At each step of iteration the spatial volume is reduced by $\la$ and after $N=\log\epsilon/\log\la$ steps we reach the a torus of unit volume.

\section{Linearized renormalization group}

We will now study the fixed point equation \eqref{eq: vn+1new} to first order in $v$. %Let us study the linearization of the RG map \eqref{eq: Rn} i.e.
Define the map
\beq
(\caL_nv)(\phi):=\la^{-2}s^{-1}v(s(\phi+\Gamma\xi_{n-1}))
\label{eq:linearrg}
\eeq
Then  \eqref{eq: vn+1new} can be written as
\beq
v^{(N)}_{n-1}(\phi)=(\caL_nv^{(N)}_n)(\phi+\Gamma_{}v^{(N)}_{n-1}(\phi))% :=\caF_n(v^{(N)}_n,v^{(N)}_{n+1})
 \label{eq: vn+1new1}
%\label{eq: }
\eeq
and we see that $\caL_n=D\caR_n(0)$, the derivative of the RG map. The linear flow $u_{n-1}=\caL_nu_n$  from scale $N$ to scale $n$
is easy to solve by just replacing $\la$ by $\la^{N-n}$:
\beq
u_n=(\caL_{n+1}\caL_{n+2}\dots \caL_{N} u_N)(\phi)=\la^{-2(N-n)}s^{n-N}u_N(s^{N-n}(\phi+\eta^{(N)}_n))
\non
\eeq
where 
\beq
\eta^{(N)}_{n}=\Gamma^N_n\xi_{n}.\label{eq:etandefinition }
%\non
\eeq
Here  $\xi_{n}$ is space time white noise on $\bbT_n=  \la^{-n}\bbT^3$ 
and $\Gamma^N_n$ is given by \eqref{eq:Gala } with $\la $ replaced by $\la^{N-n} $ i.e. its integral kernel is
\beq
 \Gamma^N_n(t,s,x,y)=\chi_{N-n}(t-s)%\int_0^t
 H_n(t-s,x-y) %e^{(t-s)\Delta}\xi_n(s)ds
 \label{eq: gamman-ndef}
%\non
\eeq 
where we denoted 
$$
H_n(t,x-y)=e^{t\Delta}(x,y)
$$  
the heat kernel on $\bbT_n$ and
\beq
\chi_{N-n}(s):=\chi(s)-\chi(\la^{-2(N-n)}s)\label{eq: gammaN-n}
%\non
\eeq 
is a smooth indicator of the interval $ [\la^{2(N-n)},2]$.

The linearized flow is especially simple for a local $u$ as the one we start with \eqref{eq: first v}.
For $u_N=\phi^k$ we get  
$$u_n=\la^{n(k-5)/2}(\phi+\eta^{(N)}_n)^k$$
and so we have "eigenfunctions"
\beq
\caL_n(\phi+\eta^{(N)}_n)^k=\la^{(k-5)/2}(\phi+\eta^{(N)}_{n-1})^k .%\label{eq: }
\non
\eeq
For $k<5$ these are "relevant", for $k>5$ they are "irrelevant" and for $k=5$ "marginal".

%Let $G_n(t,x-y)=e^{t\Delta}(x,y)$ be the heat kernel on $\bbT_n$. 
The covariance of  $\eta^{(N)}_n$ is readily obtained from \eqref{eq: gamman-ndef} (let $t'\geq t$):
\baq
 \bbE \eta^{(N)}_{n}(t',x') \eta^{(N)}_{n}(t,x)&=&%G_n(t'-t)\int_0^tG_n(2s)(\chi(s)-\chi(\la^{-2n}s))^2dsG_n(t'-t)
%e^{(t'-t)\Delta}
\int_0^t%e^{2s\Delta}
H_n(t'-t+2s,x'-x)\chi_{N-n}(t'-t+s)\chi_{N-n}(s)ds\non\\
&:=&C^{(N)}_n(t',t, x',x)
\label{eq:Cndef }
\eaq
 In particular we have
\beq
\bbE \eta^{(N)}_{n}(t,x)^2=\int_0^tH_n(2s,0)\chi_{N-n}(s)^2ds.%=\int_0^t(4\pi s)^{-3/2}(\chi(s)-\chi(\la^{-2n}s))^2ds%=\la^{-n}\rho(\frac{2}{3}(4\pi)^{3/2}\la^{-n} +\caO(1)%
\label{eq:etasq }
\eeq
This integral diverges as $N-n\to\infty$ and is the source of the first renormalization constant
 in \eqref{eq: first v}. 
 
 We need to study the
$N$ and $\chi$ dependence of the solution to \eqref{eq: regpde } . Since these dependencies are very similar we deal with them together. Thus let
$\Gamma'^{(N)}_{n}$ the operator \eqref{eq: gamman-ndef}  where the lower cutoff 
in \eqref{eq: gammaN-n} is modified to another bump $\chi'$:
\beq
\chi'_{N-n}(s)=\chi(s)-\chi'(\la^{-2(N-n)}s).%\int_0^t
\label{eq: gammaN-ndef}
%\non
\eeq 
 Varying $\chi'$ allows to study cutoff dependence of our scheme. Taking  $\chi'(s)=\chi(\la^{-2} s)$ in turn implies $\Gamma'^{(N)}_{n}=\Gamma^{(N+1)}_{n}$ and this allows us to study the  $N$ dependence and the convergence as $N\to \infty$. The following lemma controls these dependences for 
\eqref{eq:etasq }:

\begin{lemma}\label{lem: heatkernel} Let 
\beq
\rho:= \int_0^\infty(8\pi s)^{-3/2}(1-\chi(s)^2)ds.\label{eq: achidef}
\eeq
Then, %for $t>\la^{2(N-n)}$
\beq
\bbE \eta^{(N)}_{n}(t,x)^2=\la^{-(N-n)}\rho+\delta_n^{(N)}(t)%\caO(\la^{N-n}/\sqrt t))+\caO(\la^{3n})
%\label{eq: }
\non
\eeq
with
\beq
|\delta_n^{(N)}(t)|\leq C(1+t^{-\hf})
\label{eq: deltaNnbound}
\eeq
and 
\beq
|\delta_n^{(N)}(t)-\delta_n'^{(N)}(t)|\leq C(t^{-\hf}1_{[0,2\la^{2(N-n)}]}(t)+e^{-c\la^{-2N}})\|\chi-\chi'\|_\infty
\label{eq: deltaNnN'bound}
\eeq
\end{lemma}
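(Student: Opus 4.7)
The plan is to replace the torus heat kernel $H_n(2s,0)$ by its continuum counterpart $(8\pi s)^{-3/2}$ and extract the divergent piece via the rescaling $u=\la^{-2(N-n)}s$. By the method of images on $\bbT_n=\la^{-n}\bbT^3$ one has
$$H_n(2s,0)=(8\pi s)^{-3/2}+E_n(s),\qquad |E_n(s)|\le C s^{-3/2}\,e^{-c\la^{-2n}/s},$$
so on the support $s\in[\la^{2(N-n)},2]$ of $\chi_{N-n}^2$ the image correction is superpolynomially small in $\la^{-2n}$. After substitution $u=\la^{-2(N-n)}s$ the continuum integral reads
$$\int_0^t(8\pi s)^{-3/2}\chi_{N-n}(s)^2\,ds=\la^{-(N-n)}\int_0^{\la^{-2(N-n)}t}(8\pi u)^{-3/2}\bigl(\chi(\la^{2(N-n)}u)-\chi(u)\bigr)^2du,$$
which exhibits the divergent prefactor $\la^{-(N-n)}$ explicitly.

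For the first bound \eqref{eq: deltaNnbound}, observe that $\chi(\la^{2(N-n)}u)\equiv 1$ for $u\le\la^{-2(N-n)}$, so on the bulk of the $u$-range the integrand reduces to the fixed function that defines $\rho$, while the boundary piece at $u\asymp\la^{-2(N-n)}$ contributes $O(\la^{N-n})$ which yields $O(1)$ after the $\la^{-(N-n)}$ prefactor. For $t<2$ the upper limit truncates the integral, leaving a missing tail of size $\int_{\la^{-2(N-n)}t}^{2\la^{-2(N-n)}}u^{-3/2}du\lesssim\la^{N-n}t^{-1/2}$, which produces the announced $t^{-1/2}$ loss. The extreme case $t<\la^{2(N-n)}$ is trivial: the original integral vanishes and the bound $\la^{-(N-n)}\rho\le C t^{-1/2}$ follows from $\la^{-(N-n)}=(\la^{2(N-n)})^{-1/2}\le t^{-1/2}$.

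For the comparison \eqref{eq: deltaNnN'bound}, I decompose
$$\delta_n^{(N)}(t)-\delta_n'^{(N)}(t)=\int_0^t H_n(2s,0)\bigl(\chi_{N-n}(s)^2-\chi'_{N-n}(s)^2\bigr)ds-\la^{-(N-n)}(\rho-\rho').$$
Because $\chi_{N-n}-\chi'_{N-n}=\chi'(\la^{-2(N-n)}\cdot)-\chi(\la^{-2(N-n)}\cdot)$ is supported in the narrow window $s\in[\la^{2(N-n)},2\la^{2(N-n)}]$ and pointwise $\le\|\chi-\chi'\|_\infty$, for $t\le 2\la^{2(N-n)}$ a direct size estimate using $H_n(2s,0)\le Cs^{-3/2}$ gives a bound $\lesssim t^{-1/2}\|\chi-\chi'\|_\infty$. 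For $t>2\la^{2(N-n)}$ the window is completely captured; after rescaling, the leading $\la^{-(N-n)}$ piece matches $\la^{-(N-n)}(\rho-\rho')$ on the nose, so only the image error $E_n$ survives. On this window $\la^{-2n}/s\ge\la^{-2N}/2$, yielding the claimed remainder $Ce^{-c\la^{-2N}}\|\chi-\chi'\|_\infty$.

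The main obstacle is guaranteeing the exact cancellation of the divergent $\la^{-(N-n)}$ contributions in the second bound: the integral term and $\la^{-(N-n)}(\rho-\rho')$ must agree up to the exponentially small image error. This cancellation rests on the algebraic observation that, under $u=\la^{-2(N-n)}s$, the upper cutoff $\chi(\la^{2(N-n)}u)$ is identically $1$ on the entire support $[1,2]$ of $\chi-\chi'$, so the full divergent coefficient is captured exactly by the integral representation of $\rho-\rho'$; the rest is routine size estimates.
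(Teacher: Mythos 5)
Your proposal follows the same route as the paper's own proof: split $H_n(2s,0)$ into the continuum kernel $(8\pi s)^{-3/2}$ plus an image error whose integrated contribution is $O(e^{-c\la^{-2n}})$, rescale $u=\la^{-2(N-n)}s$ to exhibit the $\la^{-(N-n)}$ divergence, bound the truncation tails by $t^{-1/2}$, and for the cutoff comparison exploit that $\chi'_{N-n}-\chi_{N-n}$ is supported in the window $[\la^{2(N-n)},2\la^{2(N-n)}]$ together with an exact cancellation of the divergent parts (the paper gets that cancellation through the global-substitution identity $\beta'(\infty,\ep)=\ep^{-1}\rho'-\rho$, you through a bulk/boundary-layer split; these are equivalent). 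All of your size estimates are correct, including the small-$t$ cases and the $e^{-c\la^{-2N}}$ bound for the image error on the window.

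One point does not literally close, and it sits exactly at the step you flag as the crux. On the bulk region $u\le\la^{-2(N-n)}$ the rescaled integrand is $(8\pi u)^{-3/2}\bigl(1-\chi(u)\bigr)^2$, not $(8\pi u)^{-3/2}\bigl(1-\chi(u)^2\bigr)$; for a smooth bump these differ by $2(8\pi u)^{-3/2}\chi(u)\bigl(1-\chi(u)\bigr)$, which is nonzero on $[1,2]$. So the divergent coefficient your computation actually produces is $\int_0^\infty(8\pi s)^{-3/2}(1-\chi(s))^2ds$ rather than the $\rho$ of \eqref{eq: achidef}, and with the printed $\rho$ the claimed cancellation ``on the nose'' leaves an uncancelled term $2\la^{-(N-n)}\int(8\pi u)^{-3/2}(\chi'-\chi)(1-\chi-\chi')du$ of size $\la^{-(N-n)}\|\chi-\chi'\|_\infty$, which is not admissible for $t>2\la^{2(N-n)}$. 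The paper's own proof commits the mirror-image slip: it silently writes $\chi_{N-n}(s)^2$ as $\chi(s)^2-\chi(\la^{-2(N-n)}s)^2$, which is precisely what makes the printed constant come out and the substitution identity exact. Since $\rho$ only ever enters through the $\chi$-dependent renormalization constant $m_1$, nothing downstream depends on its exact value; but as written your claim that the bulk integrand ``is the fixed function that defines $\rho$'' is not literally true, so you should either restate the lemma with $\rho$ defined via $(1-\chi)^2$ (which is what the exact variance gives) or adopt the paper's difference-of-squares convention explicitly. Also, in the regime $t\le 2\la^{2(N-n)}$ remember that the subtracted term $\la^{-(N-n)}(\rho-\rho')$ must be absorbed by $t^{-1/2}\|\chi-\chi'\|_\infty$ as well; this is immediate from $\la^{-(N-n)}\le\sqrt2\,t^{-1/2}$ there, but it belongs in the estimate.
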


\noindent The Lemma is proved in Section \ref{se:kernel estimates}.
We will fix in \eqref{eq: first v}  the first renormalization constant
\beq
a=-3\rho
\label{eq: 1srgconstant}
%\non
\eeq
Defining
\beq
\rho_k=\la^{-k}\rho,
\label{eq: 1srgconstantk}
%\non
\eeq
the first order solution to our problem is
\beq
u^{(N)}_n:=-\la^n((\phi+\eta^{(N)}_n)^3+3\rho_{N-n}(\phi+\eta^{(N)}_n)).
\label{eq: undef}
\eeq

\begin{remark}\label{rem: relevant} 
Note that since we have the factor $\la^n$ in $u^{(N)}_n$ the counting of what terms 
are relevant, marginal or irrelevant  depends on the order in $\la^n$. Thus $\la^n(\phi+\eta^{(N)}_n)^k$
is  relevant for $k<3$, marginal  for $k=3$ and irrelevant   for $k>3$.
Similarly, $\la^{2n}(\phi+\eta^{(N)}_n)^k$ is marginal for $k=1$ which is the source of the
renormalisation constant  $b$ in \eqref{eq: reps}. The terms of order $\la^{3n}$
are all irrelevant. For a precise statement, see Proposition \ref{prop: linrgnorm}. 
\end{remark}

\begin{remark}\label{rem: other initial} Consider the random initial condition discussed in Remark \ref{rem: initial cond}. We realize it in terms of the white noise on $(-\infty,0]\times \bbT_n$. In a regularized form we  replace $e^{t\Delta}\varphi_0  $ in\eqref{eq:inteq } by
\beq
\int_{-\infty}^0(1-\chi((t-s)/\ep^2))e^{(t-s)\Delta}\Xi(s)ds. \label{eq: regpdeini }
%\non
\eeq
This initial condition  can be absorbed to $\eta^{(N)}_n$. Indeed, the
covariance \eqref{eq:Cndef } is just replaced by the stationary one
\beq
 \bbE \eta^{(N)}_{n}(t',x') \eta^{(N)}_{n}(t,x)= \int_{-\infty}^t%e^{2s\Delta}
H_n(t'-t+2s,x'-x)\chi_{N-n}(t'-t+s)\chi_{N-n}(s)ds
\label{eq:Cndefstat }
\eeq
and in particular  $\delta^{(N)}_{n}=0$ which makes the analysis in Section
\ref{se:noise} actually less messy.
\end{remark}

% \begin{remark}\label{rem: Wick} 
 %The expression \eqref{eq: undef} is close to a  "normal ordered" one. Let
%\beq
%\tilde u^{(N)}_n:=\la^n((\phi+\eta^{(N)}_n)^3-3 (\phi+\eta^{(N)}_n)\bbE(\eta^{(N)}_n)^2)
%\label{eq: tildeundef1}
%\eeq
%and 
 %\beq
%\delta_n(t):=\la^n(\bbE(\eta^{(N)}_n(t,x))^2-3\rho_{N-n}).\label{eq: deltan}\eeq
%Then 
%\beq
%u^{(N)}_n=\tilde u^{(N)}_n+3\delta_n(\phi+\eta^{(N)}_n).
%\label{eq: oepsilon}
%\eeq
%In virtue of Lemma \ref{lem: heatkernel} the second term will give a small contribution.
%\end{remark}

\section{Second order calculation}

We will solve equation  \eqref{eq: vn+1new} by a fixed point argument in a suitable space of
$v^{(N)}_n$. Before going to that we need to spell out explicitly the leading relevant (in the RG sense)
terms. By Remark \ref{rem: relevant}  this requires looking at the second order terms in $v^{(N)}_n$.
To avoid too heavy notation we will drop the superscript ${}^{(N)}$ in $v^{(N)}_n$, $\eta^{(N)}_n$ and other expressions unless needed for clarity. %we specifically want to discuss the $N$ dependence.

Let us  first separate in  \eqref{eq: vn+1new} the linear part:
\beq
v_{n-1}=\caL_nv_n +\caG_n(v_n,v_{n-1})
\label{eq:newfp }
%\non
\eeq
where we defined 
\beq
\caG_n(v,\bar v)(\phi)=(\caL_nv)(\phi+\Gamma_{}\bar v(\phi))-(\caL_nv)(\phi)
\label{eq: Gdef1}
%\label{eq: }
\eeq
Recalling the first order expression $u_n$ in \eqref{eq: undef}
we write
\beq
v_n=u_n+w_n\non
%\label{eq: }
\eeq
so that $w_n$ satisfies
\beq
w_{n-1}=\caL_nw_n +\caG_n(u_n+w_n,u_{n-1}+w_{n-1}), 
\label{eq:newfp }
%\non
\eeq
with the initial condition
\beq
w_N(\phi)=-\la^{2N}(m_2\log\la^N+m_3)\phi.
\label{eq:newfpic }
%\non
\eeq
The reader should think about $u_n$ as $\caO(\la^{n})$ and $w_n$ as $\caO(\la^{2n})$.

Next, we separate from  the $\caG_n%(u^{(N)}_n,u^{(N)}_{n-1})
$-term in \eqref{eq:newfp } the $\caO(\la^{2n})$ contribution. Since $\caL_nu_n=u_{n-1}$ we have
\beq
\caG_n(u_n,u_{n-1})(\phi)=u_{n-1}(\phi+\Gamma_{}u_{n-1}(\phi))-u_{n-1}(\phi)
\label{eq: Gdef1}
%\label{eq: }
\eeq
which to $\caO(\la^{n})$ equals $Du_{n-1} \Gamma u_{n-1}$ where
%\beq
%\tilde\caG_n(v,\tilde v):=\caG_n(v,\tilde v)-D(\caL_nv)\Ga\tilde v
%\label{eq: Gdef1}
%\label{eq: }
%\eeq
%\beq
%\caG_n(u^{(N)}_n,u^{(N)}_{n-1})=%\epsilon_n\epsilon_{n+1}
%D(\caL_nu^{(N)}_n)\Gamma u^{(N)}_{n-1}+\tilde\caG_n(u^{(N)}_n,u^{(N)}_{n-1})%\caO(\la^{3n})
%. \label{eq: 2ndorder}
%\non
%\eeq
%Since $\caL_nu_n=u_{n-1}$ we get
%$$Du^{}_n=3\la^n((\phi+\eta^{}_n)^2-\la^{-n}a_\chi%\bbE\eta_n^2).
%$$ 
%The first term in \eqref{eq: 2ndorder} equals
%$$
%\la^{-2}s^{-1}Du^{}_n(s(\cdot+\Gamma\xi_{n-1}))\la^{\hf}\Gamma u^{}_{n-1}
%$$
%and since
%$$
%\la^{-2}s^{-1}Du_n(s(\cdot+\Gamma\xi_{n-1}))=\caL_nDu_n=\la^{-\hf} Du_{n-1}
%$$
%\beq
%\tilde\caG_n%(u_n,u_{n-1}) 
% =\caG_n(u_n,u_{n-1})-Du_{n-1} \Gamma u_{n-1}.%+\caO(\la^{3n}).%\label{eq: }\non\eeq
\beq
Du_{n-1}=-3\la^{n-1}((\phi+\eta_{n-1})^2-\rho_{N-n+1}).%\bbE\eta_n^2)%\label{eq: }
\non
\eeq
%$\tilde\caG_n$  is  thus $\caO(\la^{3n})$.  Summarizing:
Hence
\beq
w_{n-1}=\caL_nw_n+Du_{n-1} \Gamma u_{n-1}+\caF_n(w_{n-1})% ,\ \ \ \nu^{}_{N}=v_N%
\label{eq:weua }
%\non
\eeq
where
\beq
\caF_n(w_{n-1})=\caG_n(u_n+w_n,u_{n-1}+w_{n-1})-Du_{n-1} \Gamma u_{n-1}%\caG(u_n,u_{n-1}))%+\tilde\caG_n.%(u_n,u_{n-1}).   
\label{eq:Fnequa }
%\non
\eeq
$\caF_n$ is $\caO(\la^{3n})$ and will turn out to  be irrelevant under the RG (i.e. it will contract in a suitable norm under the linear RG map $\caL_n$). 

It is useful to solve  \eqref{eq:weua } without the $\caF_n$ term: let $U_n$ satisfy
\beq
U_{n-1}=\caL_nU_n+Du_{n-1} \Gamma u_{n-1},\ \ \ U_{N}=-\la^{2N}(m_2\log\la^N+m_3)\phi
\label{eq:Uiteration and ic }
%\non
\eeq
The solution is 
\beq
 U_n=Du_{n} \Gamma^N_nu_{n}-\la^{2n}(m_2\log\la^N+m_3)(\phi+\eta_n)\label{eq: v1n}
\eeq
where $\Gamma^N_n$ is defined in  \eqref{eq: gammaN-n}.
%\beq
% \Gamma^N_n(t,s)=e^{(t-s)\Delta}\chi_{N-n}(t-s)%(\chi(t-s)-\chi(\la^{-2n}(t-s)))
 %\label{eq:Gammalandef }
%\non
%\eeq
This is seen by just doing one step RG with $\la$ replaced by $\la^{N-n}$. 
Now write
\beq
w_n= U_n+\nu_n\label{eq:vndeco }
%\non
\eeq
so that $\nu_{n-1}$ satisfies
the equation
\beq
\nu_{n-1}= \caL_n\nu_{n} +\caF_n(U_{n-1}+ \nu_{n-1}),\ \ \ \nu_N=0\label{eq: wfp}
\eeq

Eq. \eqref{eq: wfp} is a fixed point equation that we will solve by contraction in a suitable space.
%$\nu_{n}$ turns out to be irrelevant (contracting) under the linear RG $ \caL_n$ so we need to show the nonlinear terms will be small perturbations.   We shall rewrite  \eqref{eq: v1n} using $\log\la^N=\log\la^n+\log\la^
%{N-n}$  as
%\beq
% U_n=(Du_{n} \Gamma^N_n u_{n}-b\la^{2n}\log\la^
%{N-n}(\phi+\eta_n))+b\la^{2n}\log\la^{n}(\phi+\eta_n).
% \label{eq: Undefin}
%\eeq
%In the next Section we will show that $U^{(N)}_n$ is be a well behaved random field as $N\to\infty$ provided we
%choose the renormalisation constant $b$ suitably. 

\section{Noise estimates}

The time of existence for the solution depends on the size of the noise. In this Section we state
probabilistic estimates for this size. 

The noise enters the fixed point equation \eqref{eq: wfp}  in
the form $\Gamma\xi_{n-1}$ that enters the definition of $\caL_n$ in \eqref{eq:linearrg} and  in the polynomials $u_n$ \eqref{eq: undef} and $U_n$ \eqref{eq: v1n} of the random field
$\eta_n$ \eqref{eq:etandefinition }.  According to Remark \ref{rem: relevant} in the second order term $U_n$  only   the constant in $\phi$ term should be relevant under the linear RG and the linear in  $\phi$ term should be marginal (neutral), the rest being irrelevant (contracting).
We'll see this indeed is the case and accordingly write 
\beq
U_n(\phi)=U_n(0)+DU_n(0)\phi+V_n(\phi) 
\label{eq: U_n}
\eeq
where explicitly
\beq
U_n(0)=3\la^{2n}(\eta_n^2-\rho_{N-n})\Gamma^N_n(\eta_n^3-3\rho_{N-n}\eta_n)
-\la^{2n}(m_2\log\la^N+m_3)\eta_n:=\la^{2n}\omega_n
 \label{eq:U_n(0) }
%\non
\eeq
and
\beq
(DU_n(0)\phi)(t,x)=\la^{2n}\frz_n(t,x)\phi(t,x)+\la^{2n}\int z_n(t,x,s,y)\phi(s,y)dsdy
 \label{eq:DU_n(0) }
%\non
\eeq
where
\beq
\frz_n=6\eta_n\Gamma^N_n(\eta_n^3-3\rho_{N-n}\eta_n)
 \label{eq:zeta_n }
%\non
\eeq
and
\beq
z_n=
9
(\eta_n^2-\rho_{N-n})\Gamma^N_n(\eta_n^2-\rho_{N-n})-m_2\log\la^N-m_3
 \label{eq: z_n}
%\non
\eeq
(here $\eta_n^2-\rho_{N-n}$ is viewed as a  multiplication operator).

The random fields whose size we need to constrain probabilistically are then 
\beq
 \eta_n, \ \ \eta_n^2-\rho_{N-n},\ \ \eta_n^3-3\rho_{N-n}\eta_n,\ \ \omega_n\ \ \frz_n,\ \ z_n
 \label{eq: noisefields}
%\non
\eeq
They belong to the Wiener chaos of white noise of bounded order and their size and regularity are controlled by studying their covariances. 
 For finite cutoff parameter $N$ these noise fields are a.s. smooth but in the limit $N\to\infty$ they become distribution valued. 
These fields enter in the RG iteration  \eqref{eq: vn+1new1} in the
combination  $\Gamma_{}v_{n-1}$ i.e. they are always acted upon by the operator $\Gamma$ which is infinitely smoothing. Therefore we estimate their size in suitable (negative index) Sobolev type norms which we now define. In addition to the fields \eqref{eq: noisefields} we also need to constrain the
Gaussian field $\Ga\xi_n$.

Let $K_1$ be the operator $(-\partial_t^2+1)^{-1}$  on $L^2(\bbR)$ i.e. it has the  integral kernel
\beq
 K_1(t,s)=\hf e^{-|t-t'|}
 \label{eq: K_1def}
%\non
\eeq
%i.e. $K_1$ %=(-\partial_t^2+1)^{-1}$ 
% is the Green function of $-\partial_t^2+1$ with boundary conditions
%$f(0)=f'(0)$. 
Let $K_2=(-\Delta+1)^{-2}$ on $L^2(\bbT_n)$ which has a continuous kernel $ K_2(x,y)=
K_2(x-y)$ satisfying
\beq
 K_2(x)\leq Ce^{-|x|}.\label{eq: K2bounded}
%\non
\eeq
Set
\beq
 K:=K_1K_2. %(-\partial_t^2+1)^{-1}%(-\Delta+1)^{-1}\label{eq: Kdef}
 %(-\Delta+1)^{-2}:=K_1K_2.
 \label{eq: Kdef}
%\non
\eeq
%Note for later use that $K$ is a positive operator on $L^2(\bbR_+\times\bbT_n)$ and it has  a continuous positive   integral kernel
%$K(t-s,x-y)$ satisfying
%\beq
 %K(t,x)\leq Ce^{-|t|-|x|}.\label{eq: Kbounded}
%\non
%\eeq
% i.e. $K$ is the operator on $L^2(\bbR)$ with integral kernel $\hf e^{-|t-t'|}$.
%Let $I_n:=[0,\la^{-2n}]$ and 
%\beq
% \label{eq: lambdandef}
%\Lambda_n:=I_n\times \bbT_n%\non
%\eeq
 Define $\caV_n$ to be be the completion of $C_0^\infty(\bbR_+\times\bbT_n)$ with the norm
\beq
\|v\|_{\caV_n}=\sup_{i}
\|Kv\|_%\infty %
{L^2(c_i)}
\eeq
%where on the RHS $v\in L^2(\Lambda_n)$ is naturally viewed as an element of  $L^2(\bbR\times\bbT_n^3)$ (i.e. $1_{I_n}v$).
where $c_i$ is the unit cube centered at $i\in \bbZ\times(\bbZ^3\cap\bbT_n)$.
To deal with the bi-local field $z_n$ in \eqref{eq: z_n} we define for $z(t,x,s,y)$ in $C_0^\infty(\bbR_+\times\bbT_n\times \bbR_+\times\bbT_n)$ 
 %)\in L^2(\La_n\times\La_n)$
\beq
\|z\|_{\caV_n}=%\sup_{t,s,x,y}|(K\otimes Kz)(t,x,s,y)|e^{-|t-s|-|x-y|}%
\sup_{i}\sum_j\|K\otimes Kz\|_{L^2(c_i\times c_j)}
\eeq

Now we can specify the admissible set of noise. Let $\ga>0$  and  define events $\caA_m$, $m>0$  in the probability space of the space time white noise $\Xi$
as follows. Let $\zeta^{(N)}_n$ denote any one of the fields \eqref{eq: noisefields}. We want to constrain the size of  $\zeta^{(N)}_n$  on the time interval $[0,\tau_{n-m}]$ where we will denote
$\tau_{n-m}=\la^{-2(n-m)}$. To do this choose a smooth bump $h$ on $\bbR$ with $h(t)=1$ for $t\leq -\la^2$ and $h(t)=0$ for $t\geq-\hf\la^2$ and set 
$h_{k}(t)=h(t-\tau_k)$ so that  $h_{k}(t)=1$ for $t\leq\tau_{k}-\la^2$ and $h_{k}(t)=0$ for $t\geq\tau_{k}-\hf\la^2$ (the reason for these strange choices will become clear in Section \ref{se:fpp}). The first condition on $\caA_m$ is that for all $N\geq n\geq m$
the following hold: 
\beq
%\caA_m =\{\Xi:  \|\rho^{(N)}_n\|_{\caV_n}, \|\Gamma \xi_n\|_{\Phi_n}\leq \la^{-\ga n}\ \ \forall n\geq m, N\geq n\}
\|h_{n-m}\zeta^{(N)}_n\|_{\caV_n}\leq\la^{-\ga n}% \,\, \,\, \,\, \,\, \,\, \,\, \,\,(\caA_m(1))%\ \ \forall n\geq m, N\geq n
%\non\\
\label{eq: amevent1}
%\non
\eeq

We need also to control the
$N$ and $\chi$ dependence of the  noise fields $\zeta^{(N)}_n$ . Recall that we can study both by varying the lower cutoff in the operator $\Gamma^{(N)}_{n}$ in  \eqref{eq: gamman-ndef}. We denote by  $\zeta'^{(N)}_n$ any of the resulting noise fields.  Our second condition on  $\caA_m$ is that for all $N\geq n\geq m$  and all cutoff functions $\chi, \chi'$ with bounded $C^1$ norm
\beq
\|h_{n-m}(\zeta'^{(N)}_n-\zeta^{(N)}_n)\|_{\caV_n}\ \leq  \la^{\ga(N-n)}\la^{-\ga n}. %\|\chi-\chi'\|_\infty.
\label{eq: amevent2}
%\non
\eeq 
The final condition concerns the fields $\Ga\xi_n$ entering the RG iteration \eqref{eq: vn+1new}. Note that these fields are $N$ independent and smooth and we impose on them a smoothness condition given in \eqref{eq: Gammaxibound}.
We have:
\begin{proposition}\label{prop: mainproba} There exist renormalization constants $m_2$ and $m_3$  %universal (i.e. independent on the cutoff$\chi$) 
 such that for some $\ga>0$ almost surely $\caA_m$ holds for some $m<\infty$.
\end{proposition}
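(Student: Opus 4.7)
The plan is a ``divergent moment plus hypercontractivity plus Borel--Cantelli'' argument adapted to the nonlocal norms on $\caV_n$. The starting point is that each noise field $\zeta^{(N)}_n$ in \eqref{eq: noisefields} is a polynomial in the Gaussian field $\eta_n$, so it lies in a Wiener chaos of degree at most three. By Nelson's hypercontractivity, for any $p\in[2,\infty)$ and any $X$ in the inhomogeneous $k$-th Wiener chaos one has $\|X\|_{L^p(\bbP)}\leq (p-1)^{k/2}\|X\|_{L^2(\bbP)}$, so it suffices to control second moments.

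Before estimating moments I would fix $m_2$ and $m_3$ by requiring that the divergent deterministic parts of $\bbE z_n$ and $\bbE\omega_n$ cancel uniformly in $N$. The divergence of $9\,\bbE\bigl[(\eta_n^2-\rho_{N-n})\Gamma^N_n(\eta_n^2-\rho_{N-n})\bigr]$ is of the form $m_2\log\la^N$ plus a $\chi$-dependent finite remainder, the logarithmic coefficient being universal because the short-distance blow-up of $C^{(N)}_n$ is dictated purely by the heat kernel and not by the bump $\chi$; this fixes $m_2$ universally and $m_3$ modulo $\chi$. After this subtraction, the renormalised $z_n$ is a genuine element of the second chaos with bounded covariance, and similarly for the correction to $\omega_n$.

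For each Wiener-chaos piece $\zeta^{(N)}_n$ I would bound $\bbE\|K h_{n-m}\zeta^{(N)}_n\|_{L^2(c_i)}^2$ by writing it, via the Wick expansion, as a finite sum of contraction diagrams built from the kernel $C^{(N)}_n$ of \eqref{eq:Cndef} and the smoothing kernels $K$ and $\Gamma^N_n$. Using the parabolic estimate on $C^{(N)}_n$ (Lemma \ref{lem: heatkernel} and its relatives deferred to Sections 10--11), with a controlled short-distance singularity and Gaussian spatial decay, plus the exponential $L^1$ decay of $K_1$ and $K_2$, one obtains a bound of the form $C\la^{-\kappa n}$ uniformly in $N\geq n\geq m$ and in the cube $c_i$; the bilocal norm on $z_n$ is controlled through the extra sum over $c_j$, which converges by the same spatial decay. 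Hypercontractivity raises this to $L^{2p}$ for any $p$, and a union bound over the $\caO(\la^{-4n})$ cubes $c_i$ meeting $[0,\tau_{n-m}]\times\bbT_n$ followed by Chebyshev yields
\beq
\bbP\bigl(\|h_{n-m}\zeta^{(N)}_n\|_{\caV_n}>\la^{-\ga n}\bigr)\leq C_p\,\la^{(2\ga p-\kappa')n}
\non
\eeq
for some $\kappa'$ independent of $p$. The difference bound \eqref{eq: amevent2} follows from the same argument applied to $\zeta'^{(N)}_n-\zeta^{(N)}_n$: this difference involves kernels supported on times $|t-s|\lesssim\la^{2(N-n)}$, and power counting produces the gain $\la^{\ga(N-n)}$. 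The smoothness bound on $\Ga\xi_n$ promised in \eqref{eq: Gammaxibound} is a standard Kolmogorov-type estimate on a bounded-scale stationary Gaussian field.

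Choosing $p$ so that $2\ga p-\kappa'<-1$, summing over $N\geq n\geq m$ and over a countable dense family of test bumps $\chi'$ of bounded $C^1$ radius, and invoking Borel--Cantelli yields $\bbP(\caA_m^c)\leq C\la^{\delta m}$ for some $\delta>0$; since $\caA_m\subseteq\caA_{m'}$ for $m\leq m'$ this gives $\bbP(\bigcup_m\caA_m)=1$. The genuinely delicate step, and the one I expect to be the main obstacle, is the combinatorial-analytic bookkeeping of the Wick diagrams for $z_n$ and $\omega_n$: one must extract precisely the $\caO(\log\la^N)$ and $\caO(1)$ divergences to define $m_2$ and $m_3$, and then show that the remainder not only converges but satisfies the parabolic $L^2$ bounds uniformly in $N$ and over the ultraviolet scales of contractions inside the diagrams.
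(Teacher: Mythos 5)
Your overall architecture is the same as the paper's: reduce everything to second-moment (covariance) bounds via hypercontractivity for bounded-order Wiener chaos, estimate the covariances of the fields in \eqref{eq: noisefields} by Wick contractions together with $L^p$ bounds on the kernels (Lemma \ref{lem: greg}), fix $m_2,m_3$ to cancel the logarithmic divergence, and finish with Chebyshev, a union bound over unit cubes and Borel--Cantelli; this is exactly Proposition \ref{pro: proba1} plus the first part of Section 11. However, the two steps you defer are the actual substance, and one of them, as you propose it, would fail. First, the renormalization: after normal ordering, the divergence sits in the deterministic bilocal kernel $18(C^{(N)}_n)^2\Gamma^N_n$, and what must be proved is not that some number of size $m_2\log\la^N+m_3$ can be subtracted from an expectation, but that this kernel equals $(\beta_2\log\la^{N-n}+\beta_3)$ times the \emph{identity} plus an operator bounded uniformly in $N$ on the relevant spaces (Proposition \ref{pro: renormalization}); only then does the counterterm act as a mass term and do the first terms of $\tilde\omega_n,\tilde z_n$ obey the covariance bounds. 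The paper obtains this from the fluctuation--dissipation identity \eqref{eq: fldiss}, $\partial_{t'}C=-\hf\Gamma+A$, which turns $C^2\Gamma$ into a total time derivative of $C^3$ plus regular pieces, isolating the divergence in the equal-time kernel $12\,C(t,t,x)^3$ whose Fourier transform has a universal log coefficient. You flag this as the main obstacle but give no mechanism; power counting alone does not show the divergent part is proportional to a delta kernel, so this is a genuine missing idea rather than bookkeeping.

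Second, the uniformity in the cutoff demanded by \eqref{eq: amevent2}: the event requires the bound for \emph{all} $\chi,\chi'$ in a $C^1$-ball, an infinite-dimensional index set. A union bound over a countable dense family of bumps does not suffice, because passing from the dense family to the whole ball needs the map $\chi'\mapsto\zeta'^{(N)}_n$ to be almost surely continuous in $\caV_n$ uniformly on the ball; that is itself a nontrivial continuity statement for a chaos-valued field indexed by an infinite-dimensional parameter (white noise pairs only with kernels controlled in positive-regularity norms, and the kernels here contain the heat-kernel singularity), and it is precisely what the paper's Kolmogorov-type chaining supplies: expand $\chi$ in a Fourier basis with coefficients decaying like $r/m$, interpolate coordinate by coordinate, and chain over dyadic values using the difference bound \eqref{eq: twopoint1}. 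Two smaller points: with $\la<1$ your Borel--Cantelli condition should be $2\ga p-\kappa'>0$ (indeed large enough to beat the $\caO(\la^{-5n})$ cube count, with the sum over $N\ge n$ controlled through the $\la^{\ga(N-n)}$ gain), not $2\ga p-\kappa'<-1$; and you should not concede a growth $\la^{-\kappa n}$ in the second moments, since downstream (Proposition \ref{prop: solution of fp}) $\ga$ must be taken small, which forces $\kappa<2\ga$ to be small too --- the paper in fact proves the weighted second moments are bounded uniformly in $n$ and $N$.
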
 

On  $\caA_m$ we will  control the RG iteration for scales $n\geq m$.
 This will enable us to solve the equation \eqref{eq: phi4pde} on the time interval
$[0,\la^{2m}]$.

\section{Fixed point problem}\label{se:fpp}

 %$\Xi=\dot\beta$ with $\beta(t,x)$ Brownian in time and white noise in space. 
We will now fix the noise $\Xi\in \caA_m$ for some $m> 0$ and set up a suitable space of functions $\nu_n(\phi)$  for the  fixed point
problems \eqref{eq: wfp} for $n\geq m$. 

Since the noise contributions take values in $\caV_n$ we let $\nu_n$ take values there as well.
 The noise enters in \eqref{eq: vn+1new} in the argument of $v^{}_n$ in the
combination  $\Gamma_{}(v_{n-1}+\xi_{n-1})$. Since $\Gamma_{}$ is infinitely smoothing this means that we
may take the domain of  $\nu_n(\phi)$  to consist of suitably smooth functions $\phi$. Since the noise contributions become distributions in the limit $N\to\infty$ and enter multiplicatively with $\phi$ e.g. in \eqref{eq: undef} we need to
match the smoothness condition for $\phi$ with that of the noise. Finally,
since \eqref{eq:neweq1 } implies $\phi_n\equiv 0$ on $[0,1]$ we let the $\phi$
be defined on $[1,\tau_{n-m}]$.%Since the problem is homogenous in space and time the norms will be of $L^\infty$-type in long space time distances.

With these motivations we take the domain $\Phi_n$ of  $v_n$ to consist of $\phi:[1,\tau_{n-m}]\times\bbT_n\to\bbC$ which are $C^2$ in
$t$ and $C^4$ in $x$ with  $ \partial^i_t\phi(1,x)=0$ for $0\leq i\leq 2$ and all $x\in \bbT_n$.
We equip $ \Phi_n$
 with
the sup norm  
$$\|\phi\|_{\Phi_n}:=\sum_{i\leq 2, |\al|\leq 4}\|\partial_t^i\partial_x^\al\phi\|_\infty.
$$ 

We will now set up the RG map \eqref{eq: vn+1new1} in a suitable
space of $v_n$, $v_{n-1}$ defined on   $\Phi_n$  and $\Phi_{n-1}$respectively. 

First, note that
for $\phi\in\Phi_{n-1}$, $s\phi(t,x)=\la^\hf\phi(\la^2t,\la x)$ is defined on $[\la^{-2},\tau_{n-m}]\times \bbT_{n}$. We extend
it to $[1,\tau_{n-m}]\times \bbT_{n}$ by setting $s\phi(t,x)=0$ for $1\leq t\leq \la^{-2}$. Next,
$\Gamma\xi_{n-1}$ vanishes (a.s.) on $[0,\la^2]$ and thus $s\Gamma\xi_{n-1}$ vanishes on
$[0,1]$. 
We can now state the final condition for the set $\caA_m$: for all $n> m$ we demand
\beq
 \|s\Gamma\xi_{n-1}\|_{\Phi_{n}}\leq \la^{-\ga n}.
 \label{eq: Gammaxibound}
%\non
\eeq 
Let $B_n\subset\Phi_n $ be the open ball centered at origin of radius $r_n=\la^{-2\ga n}$ and  $\caW_n(B_n)$ be the space of analytic functions from  $B_n$ to $\caV_n$ equipped with the supremum norm which we denote by
$\|\cdot\|_{B_n}$ (see \cite{bgk} for a summary of basic facts on analytic functions on Banach spaces). We will solve the fixed point problem  \eqref{eq: vn+1new1} in this space. We collect some elementary properties of these norms in
the following   lemma, proven in Section \ref{se:kernel estimates}:

\begin{lemma}\label{lem: gammamap}  

(a) $s\Gamma %$ is a bounded operator from $
: \caV_{n-1}\to \Phi_n$ and  $h_{n-1-m}\Gamma:\caV_{n-1}\to\caV_{n-1}$
are bounded operators with norms bounded by $C(\la)$. Moreover $s\Gamma h_{n-1-m} v= s\Gamma v$ as elements of $\Phi_n$.
\vskip 2mm
\noindent (b) $s: \Phi_{n-1}\to \Phi_n$ and $s^{-1}: \caV_{n}\to \caV_{n-1}$ are bounded with
$$ %\|s\Gamma\|\leq C(\la),\ \ \ 
\|s\|\leq \la^{\hf},\ \ \ \|s^{-1}\|\leq C\la^{-\hf}.$$ %uniformly in $n$.
%Moreover, if $\psi\in C^\infty(\bbR)$ with $\psi(t)=1$ for $t\geq 1$ then $s\Gamma\psi v= s\Gamma v$
%on $[1,\tau_{n-m}]$.
%$G_1  $ is a bounded operator from $\caV_m$  to $\Phi_m$ with norm $$\|G_1\|\leq C(\la).$$

\vskip 2mm
\noindent (c) Let $\phi %\in\Phi_n$ with $ \partial^i_t\phi|_{t=\tau_{n-m}}=0$ for $i\leq 2$
\in C^{2,4}(\bbR\times\bbT_n)$ and  $v\in\caV_n$. Then $\phi v\in\caV_n$ and $\|\phi v\|_{\caV_n}\leq C\|\phi\|_{C^{2,4}}\|v\|_{\caV_n}$. 
\vskip 2mm
%\noindent (c) Let  $v\in \caV_{n-1}$. Then $s\Gamma h_{n-1-m} v= s\Gamma v$. We also have $\|h_{n-1-m}\Gamma v\|_{ \caV_{n-1}}\leq C(\la)\| v\|_{ \caV_{n-1}}$.
\end{lemma}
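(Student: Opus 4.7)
\noindent\emph{Proof plan.} The three parts have different flavors: (a) exploits the infinite smoothing of $\Gamma$ and the sharp temporal support of its kernel; (b) is essentially a scaling computation, though the cross-scale bound for $s^{-1}$ needs care; (c) is an integration-by-parts estimate.

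For (a), recall that the kernel $\Gamma(t,s,x,y) = (\chi(t-s)-\chi((t-s)/\la^2))H_n(t-s,x-y)$ is smooth, supported in $t-s\in[\la^2,2]$, and has Gaussian decay in $x-y$. The idea is to represent $v = K^{-1}(Kv)$, where $K^{-1}=(1-\partial_t^2)(1-\Delta)^2$, and integrate by parts in the inner variables, writing
\[
(\Gamma v)(T,X) = \int \tilde\Gamma(T,s,X,y)(Kv)(s,y)\,ds\,dy,
\]
with $\tilde\Gamma = K^{-1}_{(s,y)}\Gamma$ inheriting the compact temporal support and Gaussian spatial decay of $\Gamma$. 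Derivatives $\partial_T^i\partial_X^\al$ for $i\le 2$, $|\al|\le 4$ can be pulled under the integral and absorbed by the heat kernel, and Cauchy--Schwarz split on unit cubes in $(s,y)$ yields $|\partial_T^i\partial_X^\al(\Gamma v)(T,X)|\le C(\la)\|v\|_{\caV_{n-1}}$ uniformly in $(T,X)$. Rescaling by $\la^{1/2+2i+|\al|}\le 1$ then gives the $\Phi_n$ bound for $s\Gamma v$. For $h_{n-1-m}\Gamma:\caV_{n-1}\to\caV_{n-1}$, since $\|\Gamma v\|_\infty\le C(\la)\|v\|_{\caV_{n-1}}$ and $K$ is an $L^1$-convolution, the required $L^2(c_i)$ bound is immediate. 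The identity $s\Gamma h_{n-1-m} v = s\Gamma v$ is a support argument: for $t\in[1,\tau_{n-m}]$ we have $\la^2 t \le \tau_{n-1-m}$, and the $s$-integration in $(\Gamma v)(\la^2 t,\la x)$ is restricted to $s\in[\la^2 t-2,\la^2 t-\la^2]\subset[0,\tau_{n-1-m}-\la^2]$, where $h_{n-1-m}\equiv 1$.

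For (b), the bound on $s$ is immediate from the chain rule: $(\partial_t^i\partial_x^\al s\phi)(t,x) = \la^{1/2+2i+|\al|}(\partial_t^i\partial_x^\al\phi)(\la^2 t,\la x)$, with each $\la^{2i+|\al|}\le 1$; the boundary condition $\partial_t^i\phi(1,\cdot)=0$ for $i\le 2$ ensures that the zero-extension of $s\phi$ on $[1,\la^{-2}]$ is $C^{2,4}$. For $s^{-1}$, the obstacle is that $K$ is not covariant under $s$. Changing variables $t'=\la^2\tau$, $y=\la z$ gives
\[
(Ks^{-1}v)(t,x) = \la^{9/2}\int K_1(t-\la^2\tau)K_2(x-\la z)v(\tau,z)\,d\tau\,dz,
\]
after which one substitutes $v = K^{-1}(Kv)$ and integrates by parts; the resolvent identities $K_1-K_1''=\delta$ and $(1-\Delta)^2 K_2=\delta$ absorb the $\la^{-2}$ factors produced by $\partial_\tau$ and $\Delta_z$. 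Covering the resulting integration region by unit cubes of $\bbT_n$ and accounting for the volume factor yields $\|Ks^{-1}v\|_{L^2(c_i)}\le C\la^{-1/2}\|v\|_{\caV_n}$.

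For (c), substitute $v = K^{-1}(Kv)$ and integrate by parts in $(s,y)$ to write
\[
K(\phi v)(t,x) = \int \tilde K_\phi(t,s,x,y)(Kv)(s,y)\,ds\,dy,
\]
where $\tilde K_\phi$ is a finite sum of terms of the form $D^\beta K(t-s,x-y)\cdot D^{\gamma}\phi(s,y)$ with $|\beta|,|\gamma|$ no larger than the orders of $K^{-1}$ (two in time, four in space). Each such kernel is bounded in amplitude by $C\|\phi\|_{C^{2,4}}$ and retains the exponential decay of $K$, so Cauchy--Schwarz on unit cubes in $(s,y)$ together with the convergent geometric sum over cubes yields the product estimate. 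The main obstacle will be the $s^{-1}$ bound in (b): the rescaled convolutions $K_1(\la^2\cdot)$ and $K_2(\la\cdot)$ spread over regions of size $\la^{-2}$ and $\la^{-3}$, overlapping many unit cubes of $\bbT_n$, so confirming that all $\la$-powers collapse to exactly $\la^{-1/2}$ (rather than a worse negative power) requires precise bookkeeping based on the resolvent identities for $K_1,K_2$.
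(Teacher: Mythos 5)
Your proposal is correct and follows essentially the same route as the paper: parts (a) and (c) are the paper's integration by parts against $K^{-1}=(-\partial_t^2+1)(-\Delta+1)^2$ with the derivatives dumped on the smooth, temporally supported, exponentially decaying kernels and then Cauchy--Schwarz summed over unit cubes, and part (b) is the paper's scaling argument. The ``precise bookkeeping'' you flag for $s^{-1}$ is exactly what the paper does by commuting $s^{-1}$ through $K^{-1}$ and using $K_1(-\la^4\partial_t^2+1)=\la^4+(1-\la^4)K_1$ and $K_2(-\la^2\Delta+1)^2=\la^4+2\la^2(1-\la^2)(-\Delta+1)^{-1}+(1-\la^2)^2K_2$ (note the rescaled derivatives produce \emph{positive} powers of $\la$, which absorb the $\delta$-term rescalings, as you anticipate), so that the only surviving $\la$-dependence is the factor $\la^{-1/2}$ from $\la^{1/2}s^{-1}$, bounded via the volume count $\la^5\cdot\la^{-5}$ over unit cubes, confirming the power $\la^{-1/2}$ you predicted.
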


 The linear RG \eqref{eq:linearrg} is controlled by 

\begin{proposition}\label{prop: linrgnorm} Given $\la<1$, $\ga>0$ there exists $n(\ga,\la) $ s.t. for $n\geq n(\ga,\la) $ $\caL_n$ maps $\caW_{n}(B_{n})$ into $\caW_{n-1}(\la^{-\hf}B_{n-1})$ %$\caW_n(\la^\hf r+C\la^{-\ga n})$  
with norm $\|\caL_n\|\leq C\la^{-\frac{5}{2}}$.
\end{proposition}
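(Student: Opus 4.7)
The plan is to verify the three ingredients of the statement separately: (i) that the affine map $T_n\phi := s(\phi+\Gamma\xi_{n-1})$ sends the ball $\la^{-\hf}B_{n-1}$ into the ball $B_n$ on which $v$ is analytic, (ii) that composition with $s^{-1}$ and multiplication by $\la^{-2}$ give the claimed norm bound, and (iii) that analyticity is preserved by the composition. Points (ii) and (iii) are essentially formal consequences of Lemma \ref{lem: gammamap}(b) and the fact that $T_n$ is affine while $v$ is analytic, so the substance is in (i).

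For (i), fix $\phi\in\la^{-\hf}B_{n-1}$, i.e.\ $\|\phi\|_{\Phi_{n-1}}<\la^{-\hf}\la^{-2\ga(n-1)}$. By Lemma \ref{lem: gammamap}(b), $\|s\phi\|_{\Phi_n}\leq \la^{\hf}\|\phi\|_{\Phi_{n-1}}<\la^{-2\ga(n-1)}=\la^{2\ga}\la^{-2\ga n}$. By hypothesis the noise lies in $\caA_m$, so the bound \eqref{eq: Gammaxibound} gives $\|s\Gamma\xi_{n-1}\|_{\Phi_n}\leq \la^{-\ga n}$. Adding,
\[
\|T_n\phi\|_{\Phi_n}\;\leq\; \la^{-2\ga n}\bigl(\la^{2\ga}+\la^{\ga n}\bigr).
\]
Since $\la<1$ and $\ga>0$, the factor $\la^{2\ga}+\la^{\ga n}$ is strictly less than $1$ once $n\geq n(\ga,\la)$, which fixes the threshold in the statement. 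Hence $T_n\phi\in B_n$.

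Putting the pieces together: for such $\phi$, $v\circ T_n$ takes values in $\caV_n$ with $\|v(T_n\phi)\|_{\caV_n}\leq \|v\|_{B_n}$. Applying $\la^{-2}s^{-1}$ and using Lemma \ref{lem: gammamap}(b) yields
\[
\|(\caL_n v)(\phi)\|_{\caV_{n-1}}\;=\;\la^{-2}\|s^{-1}v(T_n\phi)\|_{\caV_{n-1}}\;\leq\; C\la^{-\tfrac{5}{2}}\|v\|_{B_n},
\]
uniformly in $\phi\in \la^{-\hf}B_{n-1}$, which gives the operator norm bound. For analyticity on $\la^{-\hf}B_{n-1}$, note that $T_n:\Phi_{n-1}\to\Phi_n$ is continuous affine (with fixed "shift" $s\Gamma\xi_{n-1}$), $v:B_n\to\caV_n$ is analytic by assumption, and $\la^{-2}s^{-1}:\caV_n\to\caV_{n-1}$ is a bounded linear map; the composition of an analytic map with continuous affine and bounded linear maps between Banach spaces is again analytic, so $\caL_n v\in\caW_{n-1}(\la^{-\hf}B_{n-1})$.

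The only nontrivial point is (i); this is why one needs the quantitative bound \eqref{eq: Gammaxibound} on $\caA_m$ together with the mild loss $\la^{\hf}$ in the scaling of $s$, so that the enlargement from $\la^{-\hf}B_{n-1}$ by the noise translation still fits inside $B_n$. Everything else is routine.
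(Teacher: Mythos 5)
Your proof is correct and follows essentially the same route as the paper: verify that $s(\phi+\Gamma\xi_{n-1})$ lands in $B_n$ using Lemma \ref{lem: gammamap}(b) and the noise bound \eqref{eq: Gammaxibound} (for $n\geq n(\ga,\la)$), then get the factor $C\la^{-\frac{5}{2}}$ from $\la^{-2}\|s^{-1}\|\leq C\la^{-\frac{5}{2}}$. The only difference is that you spell out the analyticity-of-composition step, which the paper leaves implicit.
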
 

\begin{proof} 
Let $v\in\caW_{n}(B_{n})$ and $\phi\in \la^{-\hf}B_{n-1}$. By \eqref{eq: Gammaxibound} and Lemma \ref{lem: gammamap} (a,b) 
%$$
%\|s\Gamma\xi_{n-1}\|_{\Phi_n}\leq C(\la)\la^{-\ga n}
%$$ 
%and so by 
%Recall the definition of $\caL_n$ in eq. \eqref{eq:linearrg}.
%\beq
%(\caL_nv)(\phi):=\la^{-2}s^{-1}v(s(\phi+\Gamma\xi_{n-1}))
%\non%
%\label{eq:linearrg11}
%\eeq
%where $(s\phi)(t,x)=\la^\hf\phi(\la^2t,\la x)$. We have $s:\Phi_{n-1}\to\Phi_n$ with
%$\|s\phi\|_{\Phi_n}\leq \la^\hf\|\phi\|_{\Phi_{n-1}}$ since $\la< 1$. 
%In \eqref{eq:linearrg}
%$(\Gamma\xi_{n-1})(t)$ enters there with $t\in[0,\la^{-2(n-1-m)}]$. From \eqref{eq:Gala } we then
%infer that $(\Gamma\xi_{n-1})(t)$ involves $\xi_{n-1}$ on the interval $[0,\la^{-2(n-1-m)}-\la^2]$.
%Hence we may replace $\Gamma\xi_{n-1}$ by $\Gamma h_{n-1-m}\xi_{n-1}$ and then apply Lemma \ref{lem: gammamap} (a)  to conclude that
%on $\caA_m$ we have 
%$$
% \|\Gamma\xi_{n-1}\|_{\Phi_{n-1}}\leq C(\la)\la^{-\ga (n-1)}.
% $$ 
% Lemma \ref{lem: gammamap} %(a) we then conclude%$\|s\phi\|_{\Phi_n}\leq \la^\hf\|\phi\|_{\Phi_{n-1}}$ so 
\beq
\|s(\phi+\Gamma\xi_{n-1})\|_{\Phi_n}\leq \la^{-2\ga (n-1)}+\la^{-\ga n}\leq \la^{-2\ga n}\non
%\label{eq:lnanal}
\eeq
for $n\geq n(\ga,\la) $. Hence $v(s(\phi+\Gamma\xi_{n-1}))$ is defined and analytic in
$\phi\in \la^{-\hf}B_{n-1}$ i.e.
 $\caL_n$ maps $\caW_{n}(B_{n})$ into $\caW_{n-1}(\la^{-\hf}B_{n-1})$
and by Lemma \ref{lem: gammamap}(b) 
\beq
\|\caL_nv\|_{\la^{-\hf}B_{n-1}}\leq C\la^{-\frac{5}{2}}\|v\|_{B_{n}}
\non
%\label{eq:lnanal}
\eeq
\end{proof}

As a corollary of Lemma \ref{lem: gammamap}(c) and \eqref{eq: amevent1} we obtain for $n\geq m$ and $N\geq n$ (recall \eqref{eq: undef} and \eqref{eq:U_n(0) }):
\beq
\|h_{n-m}u^{(N)}_n\|_{RB_n}\leq CR^3 \la^{(1-6\ga)n}%(\|\phi\|_{\Phi_n}+\la^{-\ga n})^3
\label{eq:unormbound }
\eeq
and
\beq
\|h_{n-m}(U^{(N)}_n(0)+DU^{(N)}_n(0)\phi)\|_{RB_n}\leq CR \la^{(2-3\ga)n}%(\|\phi\|_{\Phi_n}+\la^{-\ga n})^5%\log\la^n
\label{eq:Unormbound}
\eeq
for all $R\geq 1$ (since they are polynomials in $\phi$ with coefficients the noise fields $\zeta_n$).% In fact, we have

%\begin{lemma}\label{le: Vnbound} There exist $\la_0>0$, $\ga_0>0$ so that for $\la<\la_0$, $\ga<\ga_0$  and $m>m(\ga,\la) $
% if 
% $\ \Xi\in A_m\ $
%  then for all $N\geq n\geq m$ 
%\beq
 %\|%h_{n-m}
%h_{n-m}U_{n}^{(N)}\|_{B_{n}}\leq \la^{(2-11\ga)n}%A\la^{\ga(N-n)}\la^{(2-10\ga)n}. 
%\label{eq: VnNbound}
%\non
%\eeq
%\end{lemma}

Next we will rewrite the fixed point equation  \eqref{eq: vn+1new1} in a localized form. 
%Suppose $v^{(N)}_{n-1}\in $ and $v^{(N)}_n$ satisfy  \eqref{eq: vn+1new1}. 
Define $\tilde v^{(N)}_n=h_{n-m}v^{(N)}_n$ so that  
\beq
\tilde v^{(N)}_{n-1}(\phi)=h_{n-1-m}(\caL_n v^{(N)}_n)(\phi+\Gamma_{}\tilde v^{(N)}_{n-1}(\phi))% :=\caF_n(v^{(N)}_n,v^{(N)}_{n+1})
 \label{eq: vn+1new1tildeee}
%\label{eq: }
\eeq
where we used Lemma \ref{lem: gammamap}(a) in the argument. 
By \eqref{eq:linearrg} 
\beq
h_{n-m-1}\caL_n=\caL_nh_{n-m-1}(\la^2\cdot)
 \label{eq: h cal commu}
%\label{eq: }
\eeq
and $h_{n-1-m}(\la^2\cdot)$ is supported on  $[0,\tau_{n-m}-\hf]$. Since
 $h_{n-m}=1$ on $[0,\tau_{n-m}-\la^2]$ we get
 \beq
h_{n-m-1}(\la^2t)=h_{n-m-1}(\la^2t)h_{n-m}(t)
 \label{eq: h la commu}
%\label{eq: }
\eeq
so that \eqref{eq: vn+1new1tildeee} can be written as
 \beq
 \tilde v^{(N)}_{n-1}(\phi)=h_{n-1-m}(\caL_n \tilde v^{(N)}_n)(\phi+\Gamma_{} \tilde v^{(N)}_{n-1}(\phi)).
\non
%\label{eq: }
\eeq
Hence the $\nu$ fixed point problem \eqref{eq: wfp} becomes
\beq
\tilde\nu_{n-1}= h_{n-1-m}(\caL_n\tilde\nu_{n} +\tilde\caF_n(\tilde U_{n-1}+ \tilde\nu_{n-1})),\ \ \ \tilde \nu_N=0\label{eq: wfptilde}
\eeq
where $\tilde\caF_n$ is as in \eqref{eq:Fnequa } i.e.
\beq
\tilde\caF_n(w)=\caG_n(\tilde u_n+\tilde w_n,\tilde u_{n-1}+w)-D\tilde u_{n-1} \Gamma \tilde u_{n-1}%\caG(u_n,u_{n-1}))%+\tilde\caG_n.%(u_n,u_{n-1}).   
\label{eq:Fnequatilde }
%\non
\eeq

%\beq
%\tilde v^{(N)}_{n-1}(\phi)=h_{n-1-m}(\caL_nv^{(N)}_n)(\phi+\Gamma_{}\tilde v^{(N)}_{n-1}(\phi))% :=\caF_n(v^{(N)}_n,v^{(N)}_{n+1})
% \label{eq: vn+1new1tilde}
%\non%\label{eq: }
%\%eeq
%By \eqref{eq:linearrg} $\caL_n\tilde v^{(N)}_n=h_{n-m}(t/\la^2)\caL_n v^{(N)}_n$ and since$h_{n-m}(t/\la^2)=1$ on  $[0,\tau_{n-m-1}-\la^2]$  which contains the support$[0,\tau_{n-m-1}-\hf]$  of $h_{n-1-m}(t)$ we get
 Thus with only a slight abuse of notation we will drop the tildes and $h$ factors in the norms in the following:

\begin{proposition}\label{prop: solution of fp}  There exist $\la_0>0$, $\ga_0>0$ so that for $\la<\la_0$, $\ga<\ga_0$  and $m>m(\ga,\la) $
 if 
 $\ \Xi\in A_m\ $
  then  then for all $N\geq n-1\geq m$ 
 the equation  \eqref{eq: wfptilde} has a  unique solution $\nu_{n-1}^{(N)}\in \caW_{n}(B_{n-1})$. 
 These solutions satisfy
 \beq
 \|\nu_{n}^{(N)}\|_{B_{n}}\leq \la^{(3-\frac{1}{4})n}\label{eq: solball}
%\non
\eeq
and $\nu_n^{(N)}$ converge in $\caW_n(B_n)$ to a limit $\nu_n\in\caW_n(B_n)$ as $N\to\infty$.
 $\nu_n$ is independent on the small scale cutoff: $\nu_n=\nu'_n$.
\end{proposition}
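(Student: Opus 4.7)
The plan is to construct the $\nu_n^{(N)}$ by downward induction on $n$, starting from $\nu_N^{(N)} = 0$, using a Banach fixed point argument at each step. Assuming inductively that $\nu_n^{(N)} \in \caW_n(B_n)$ with $\|\nu_n^{(N)}\|_{B_n} \leq \la^{(3-1/4)n}$, I would seek $\nu_{n-1}^{(N)}$ as the unique fixed point, in the ball $\mathcal B$ of radius $\la^{(3-1/4)(n-1)}$ inside $\caW_{n-1}(B_{n-1})$, of the map
$$
T:\mu \;\mapsto\; h_{n-1-m}\bigl(\caL_n\nu_n^{(N)} + \caF_n(U_{n-1}+\mu)\bigr).
$$

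To verify $T(\mathcal B)\subset\mathcal B$, Proposition \ref{prop: linrgnorm} bounds the linear piece by $\|\caL_n\nu_n^{(N)}\|_{B_{n-1}} \leq C\la^{-5/2}\la^{(3-1/4)n} = C\la^{1/4}\la^{(3-1/4)(n-1)}$, a clean $\la^{1/4}$ gain per RG step. For the nonlinear piece, Taylor-expand $\caG_n(u_n+w_n,\,u_{n-1}+U_{n-1}+\mu)$ in the perturbation $\Gamma(u_{n-1}+U_{n-1}+\mu)$. The first-order Taylor term contains $Du_{n-1}\,\Gamma u_{n-1}$, which is exactly the subtraction in \eqref{eq:Fnequatilde }, so the remaining contributions are (i) the mixed first-order-in-$\Gamma$ terms such as $Du_{n-1}\,\Gamma U_{n-1}$, of size $\la^n\cdot\la^{2n}=\la^{3n}$; (ii) the second- and higher-order Taylor terms, likewise $\caO(\la^{3n})$; and (iii) pieces involving $D\caL_n w_n$, controlled by the inductive bound on $\nu_n^{(N)}$ and the explicit formula \eqref{eq: v1n} for $U_n$. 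Combining \eqref{eq:unormbound }, \eqref{eq:Unormbound}, and Lemma \ref{lem: gammamap}(c), each is $\caO(\la^{3n-c\ga n})$ in the $\caW_{n-1}(B_{n-1})$ norm, beating $\la^{(3-1/4)(n-1)}$ for $\ga,\la$ small. The contraction property of $T$ is analogous: differentiating $\caF_n$ in $\mu$ still leaves at least one factor of $\Gamma u_{n-1},\Gamma U_{n-1}$ or $\Gamma\mu$ in every surviving term, yielding a Lipschitz constant $\caO(\la^{n-c\ga n})\ll 1$. Banach's theorem then delivers $\nu_{n-1}^{(N)}$ satisfying \eqref{eq: solball}, and the induction proceeds down to $n=m$.

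For the limit $N\to\infty$ and cutoff independence, set $\delta_n := \nu_n^{(N')} - \nu_n^{(N)}$, respectively $\delta_n := \nu_n'^{(N)} - \nu_n^{(N)}$ for two cutoff functions $\chi,\chi'$. Subtracting the two fixed point equations yields a linear recursion
$$
\delta_{n-1} \;=\; h_{n-1-m}\bigl(\caL_n\delta_n \;+\; L_n\delta_{n-1} \;+\; S_n\bigr),
$$
where $L_n$ has operator norm $\caO(\la^{n-c\ga n})$ (from the Lipschitz bound on $\caF_n$ above) and $S_n$ aggregates the differences of the noise fields in \eqref{eq: noisefields}. By the second admissibility estimate \eqref{eq: amevent2}, $\|S_n\|_{B_n} \leq C\la^{\ga(N-n)}\la^{(3-c\ga)n}$. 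Iterating from $\delta_N = 0$, solving the small linear term $L_n\delta_{n-1}$ by absorbing it to the left, and using the per-step $\la^{1/4}$ gain from $\caL_n$, one obtains $\|\delta_n\|_{B_n} \leq C\la^{\ga(N-n)}\la^{(3-1/4)n}$. This tends to zero as $N\to\infty$, so $\nu_n^{(N)}$ is Cauchy in $\caW_n(B_n)$; the same estimate with $\chi$ vs.\ $\chi'$ gives $\nu_n = \nu_n'$.

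The main obstacle is the bookkeeping of competing powers of $\la$: the target norm $\la^{(3-1/4)n}$ must simultaneously absorb the $\la^{-5/2}$ cost per $\caL_n$ application, the $\la^{-2\ga n}$ loss from the ball radius $r_n=\la^{-2\ga n}$ entering Proposition \ref{prop: linrgnorm}, and the $\la^{-\ga n}$ losses from the noise bounds \eqref{eq: amevent1}, while the intrinsic third-order-in-$\la^n$ smallness of $\caF_n$ (after the explicit second-order subtraction $Du_{n-1}\,\Gamma u_{n-1}$) must supply the margin. The slightly suboptimal exponent $3-\frac{1}{4}$ in \eqref{eq: solball} rather than $3$ is precisely this slack, chosen so that the geometric series of losses along the RG trajectory converges for $\la,\ga$ sufficiently small.
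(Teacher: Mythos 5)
Your overall architecture---downward induction in $n$, a Banach fixed point for $\nu_{n-1}$ in a ball of radius $\la^{(3-\frac{1}{4})(n-1)}$, the $\la^{\frac{1}{4}}$ gain per step from Proposition \ref{prop: linrgnorm}, Cauchy-type bounds on the Taylor remainder of $\caG_n$, and a linear difference recursion for the $N\to\infty$ limit---is the same as the paper's. But there is a genuine gap in how you control the second-order input $U_{n-1}$. The bound \eqref{eq:Unormbound} covers only the constant and linear-in-$\phi$ parts $U_n(0)+DU_n(0)\phi$, i.e.\ the probabilistically controlled fields $\omega_n,\frz_n,z_n$ of \eqref{eq: noisefields}; the remainder $V_n$ of \eqref{eq: U_n} is not covered, and it cannot simply be read off the explicit formula \eqref{eq: v1n} using Lemma \ref{lem: gammamap}(c), as you propose. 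The quadratic-and-higher-in-$\phi$ terms there, e.g.\ $\la^{2n}(\eta_n^2-\rho_{N-n})\,\Gamma^N_n(\phi^2\eta_n)$, involve the \emph{multi-scale} kernel $\Gamma^N_n$, which is not uniformly smoothing in $N$ (its kernel is only in $L^p$, $p<5/3$); these are products of distribution-valued fields with non-smooth functions that are neither among the fields constrained in \eqref{eq: noisefields} nor reachable with Lemma \ref{lem: gammamap}, whose smoothing statements concern only the unit-scale $\Gamma$. Since $\|U_n\|_{B_n}$ enters the estimate of $\caG_n(w_n,u_{n-1}+w)$ through $\|w_n\|\leq\|U_n\|+\|\nu_n\|$, your claim that all nonlinear contributions are $\caO(\la^{3n-c\ga n})$ is unsupported at exactly this point, and it is the main technical content of the proposition beyond the routine fixed-point setup.

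The paper closes this gap with a separate induction for $V_n$: from \eqref{eq:Uiteration and ic } one gets $V_{n-1}=\caL_nV_n-(\caL_nV_n)(0)-D(\caL_nV_n)(0)\phi+\tilde V_{n-1}$, where the driving term $\tilde U_{n-1}=Du_{n-1}\Gamma u_{n-1}$ involves only the unit-scale, infinitely smoothing $\Gamma$ and is estimated by Cauchy; subtracting the value and derivative at $0$ and using analyticity on the enlarged ball $B'=\la^{-\hf}B_{n-1}$ improves the per-step factor from $\la^{-5/2}$ to $\la^{-3/2}$, so the inductive bound \eqref{Vnite}, $\|V_n\|_{B_n}\leq\la^{(2-11\ga)n}$, closes for small $\ga$ (this is also why the fixed point must be run on the larger domain $B'$, a detail you omit). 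The same issue recurs in your convergence argument: your source term $S_n$ must include $U_{n-1}-U'_{n-1}$, whose $V$-part is again not a difference of the noise fields bounded in \eqref{eq: amevent2} but requires the analogous induction for $V_n-V'_n$ carried out in the paper. Without these two inductions the proof does not close.
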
 

\begin{proof} 
We solve   \eqref{eq: wfptilde}  by Banach fixed point theorem in the ball % \eqref{eq: solball}
$ \|\nu_{n-1}\|_{B'}\leq \la^{(3-\frac{1}{4})(n-1)}$ where $B'=\la^{-\hf} B_{n-1}$ (we only need to prove analyticity in $B_{n-1}$ but for bounding $U_n$  the larger region is needed). By  %Lemma \ref{lem: gammamap}(b) and
Proposition \ref{prop: linrgnorm}  we have
\beq
 \| %h_{n-1-m}
 \caL_n\nu_{n} \|_{B'}\leq  C\la^{-\frac{5}{2}}\la^{(3-\frac{1}{4})n}=C\la^{\frac{1}{4}}
 \la^{(3-\frac{1}{4})(n-1)}
 \label{eq: calnubound}
%\non
\eeq
Next we estimate the $\caF_n$ term in \eqref{eq: wfptilde}.  $\caF_n$ is given in \eqref{eq:Fnequatilde } so we need to start with $\caG_n$ given in \eqref{eq: Gdef1}. Let  $v\in\caW_n(B_n)$ and $\bar v\in \caW_{n-1}(B')$ and
define
\beq
f(v,\bar v)(\phi):= \la^{-5/2}s^{-1}v(s(\phi+\Gamma\xi_{n-1}+\Gamma_{}\bar v(\phi)).%\caG_n(v, z\bar v)(\phi)=(\caL_nv)(\phi+z\Gamma_{}\bar v(\phi))-(\caL_nv)(\phi)
\label{eq: fdefinition}
%\non
\eeq
For $\phi\in B'$ we have by Lemma \ref{lem: gammamap}(a)  and \eqref{eq: Gammaxibound}
\beq
\|s(\phi+\Gamma\xi_{n-1}+\Gamma_{}\bar v(\phi))\|_{\Phi_n}\leq %\la^\hf
\la^{-2\ga (n-1)}+\la^{-\ga n}
+ C(\la)\|\bar v\|_{B'}\non
%\label{eq:lnanal}
\eeq
Hence $f(v,\bar v)\in\caW_{n-1}(B') $ provided
\beq
\|\bar v\|_{B'}\leq c(\la)\la^{-2\ga n}
%|z|\leq c(\la)\la^{-2\ga n}\|\bar v(\phi)\|_{\caV_{n-1}}^{-1}.
\label{eq:fanalregion}
\eeq
We use this first to estimate 
$$g_n:=\caG(u_n,u_{n-1})-Du_{n-1} \Gamma u_{n-1}.
$$
We have $g_n=f(1)-f(0)-f'(0)$ where $f(z)=f(u_n,zu_{n-1})$. By \eqref{eq:fanalregion} $f$ is analytic in
$$|z|<c(\la)\la^{-2\ga n}\|u_{n-1}\|_{B'}^{-1}
$$
 and so by 
a Cauchy estimate and \eqref{eq:unormbound } 
\beq
\|g_n\|_{B'} %C(\la)\|\bar v(\phi)\|^2_{\caV_{n-1}} \la^{4\ga(n-1)}\|\caL_nv\|_{B_{n-1}}
\leq C(\la) \la^{4\ga n}\| u_n\|_{B_n}\|u_{n-1}\|^2_{B'}\leq C(\la) \la^{(3-14\ga)n}
%\label{eq: Gdef1new}
\label{eq: geen}
%\non
\eeq
Next we write 
\beq
\caF_n(w)=g_n+\caG_n(w_n,u_{n-1}+w)+h_n(w)
%\caG_n( u_n+ w_n, u_{n-1}+w)-D\tilde u_{n-1} \Gamma \tilde u_{n-1}%\caG(u_n,u_{n-1}))%+\tilde\caG_n.%(u_n,u_{n-1}).   
\label{eq:Fnequatilde1 }
%\non
\eeq
with
\beq
h_n(w)=\caG_n(u_n,u_{n-1}+w)-\caG(u_n,u_{n-1})
%\label{eq:Fnequatilde1 }
\non
\eeq
We have $h_n(w)=\tilde f(1)-\tilde f(0)$ with $\tilde f(z)=f(u_n,u_{n-1}+zw)$ which is analytic in
$$|z|<c(\la)\la^{-2\ga n}\|w\|_{B'}^{-1}.
$$
Hence by a Cauchy estimate
\beq
\|h_n(w)\|_{B'} \leq  C(\la) \la^{2\ga n} \|u_n \|_{B_n}\|w\|_{B'}.
\label{eq: heen}
%\non
\eeq
Finally in the same way
\beq
\|\caG_n(w_n,u_{n-1}+w)\|_{B'} \leq C(\la) \|w_n \|_{B_n}(\|u_{n-1}\|_{B'}+\|w\|_{B'}).
\label{eq:caGnbound2 }
\eeq
Recalling  \eqref{eq:vndeco } we have
\beq
 \|w_n \|_{B_n}\leq  \|U_n \|_{B_n}+ \|\nu_n \|_{B_n}
\label{wnbound}
\eeq
so to proceed we need a bound for $U_n$. It is defined iteratively in  \eqref{eq:Uiteration and ic } which we write as
\beq
U_{n-1}=\caL U_n+\tilde U_{n-1}
\non
\eeq
where $\tilde U_{n-1}=Du_{n-1} \Gamma u_{n-1}=f'(0)$ where  $f(z)=f(u_n,zu_{n-1})$ as above.
 Again by Cauchy we get
\beq
\|\tilde U_{n-1}\|_{B'}\leq C(\la)\la^{2(1-6\ga)n+2\ga n}.
\label{tildeun-1}
\eeq
Recall the definition of $V_n$ in \eqref{eq: U_n}. It satisfies 
\beq
V_{n-1}(\phi)=(\caL_n V_n)(\phi)-(\caL_n V_n)(0)-D(\caL_n
V_n)(0)\phi+\tilde V_{n-1}(\phi).
\label{eq: Viteration11}
\non
\eeq
where
$\tilde V_{n-1}=\tilde U_{n-1}-\tilde U_{n-1}(0)-D\tilde U_{n-1}(0)\phi$. From \eqref{tildeun-1} we get
\beq
\|\tilde V_{n-1}\|_{B'}\leq C(\la)\la^{(2-10\ga)n}.
\label{tildeVn-1}
\eeq
Assume inductively
\beq
\|V_{n}\|_{B_n}\leq \la^{(2-11\ga)n}.
\label{Vnite}
\eeq
Proposition \ref{prop: linrgnorm} combined with a Cauchy estimate (here we use $B'=\la^{-\hf}B_{n-1}$) and \eqref{tildeVn-1} gives
\beq
\| V_{n-1}\|_{B_{n-1}}\leq C\la^{-3/2}\| V_{n}\|_{B_{n}}+C(\la)\la^{(2-10\ga)n}
\label{Vniteee}
\eeq
which proves the induction step taking $\gamma$ small enough and $n\geq n(\la)$. Since $U_N$ is linear by
\eqref{eq:Uiteration and ic } the induction starts with $V_N=0$. 
Combining \eqref{Vnite} with \eqref{eq:Unormbound} and the initial condition in \eqref{eq:Uiteration and ic }
we then arrive at
$$
 \|U_n \|_{B_n}\leq 2\la^{(2-11\ga)n}.
$$
Combining this bound with  \eqref{eq: geen}, \eqref{eq: heen} and \eqref{eq:caGnbound2 } gives, for
$\gamma$ small enough
\beq
\|\caF_n( U_{n-1}+ \nu_{n-1})\|_{B_{n-1}}
\leq \la^{(3-\frac{1}{4})n}+\la^{\hf n}(\|\nu_{n-1}\|_{B_{n-1}}+
\|\nu_{n}\|_{B_n})+\|\nu_{n-1}\|_{B_{n-1}}
\|\nu_{n}\|_{B_{n}}
\non
\eeq
Recalling  \eqref{eq: calnubound} and    Lemma \ref{lem: gammamap}(c) to bound the  $h_{n-1-m}$ factor in \eqref{eq: wfptilde} we conclude that  the ball 
$ \|\nu_{n-1}\|_{B_{n-1}}\leq \la^{(3-\frac{1}{4})(n-1)}$ is mapped by the RHS of   \eqref{eq: wfptilde}
to itself. The map is also a contraction if $n\geq n(\la)$ since by \eqref{eq: calnubound}  $\caL$ is and
\beq
\|\caF_n(U_{n-1}+\nu_1)-\caF_n(U_{n-1}+\nu_2)\|_{B_{n-1}}\leq  C(\la) \la^{(1-6\ga)n}
\|\nu_1-\nu_2\|_{B_{n-1}}.
 %\label{eq: }
\non
\eeq

Let us address the convergence as $N\to\infty$ and cutoff dependence of $\nu_n=\nu_n^{(N)}$. Recall we can deal with both questions together with $\nu'_n$. Since $u_n$ and $U_n-V_n$ are
polynomials in $\phi$ with coefficients $\zeta_n$ satisfying 
the estimate \eqref{eq: amevent2} we get
\baq
\|u_n-u'_n\|_{B_n}&\leq& C \la^{\ga (N-n)} \la^{(1-6\ga)n}%\|\chi-\chi'\|_\infty%(\|\phi\|_{\Phi_n}+\la^{-\ga n})^3
\label{eq:deltaunormbound1 }\\
\| (U_n-V_n)-(U'_n-V'_n)\|_{B_n}&\leq& C \la^{\ga (N-n)} \la^{(2-3\ga)n}.%\|\chi-\chi'\|_\infty.%(\|\phi\|_{\Phi_n}+\la^{-\ga n})^5%\log\la^n
\label{eq:deltaUnormbound1}
\eaq
To study $\caV_n:=V_n-V'_n$ we need to estimate (recall \eqref{eq: Viteration11})
$\tilde V_n-\tilde V'_n$ which in turn is determined by $\tilde U_n-\tilde U'_n$. This is again estimated by Cauchy and we get%=f(1)-f(0)$ where$f(z)=f(u'_n+z(u_n-u'_n),u'_{n-1}+z(u_{n-1}-u'_{n-1}))$. By a Cauchy estimate we obtain
$$\|\tilde U_n-\tilde U'_n\|_{B'}\leq C(\la) \la^{\ga (N-n)} \la^{(2-10\ga)n}.
$$
Proceeding as in the derivation of \eqref{Vniteee} we get
\beq
\| \caV_{n-1}\|_{B_{n-1}}\leq C\la^{-3/2}\| \caV_{n}\|_{B_{n}}+C(\la) \la^{\ga (N-n)} \la^{(2-10\ga)n}
\non%\label{Vniteee}
\eeq
leading to
\beq
\| \caV_{n}\|_{B_{n}}\leq C(\la) \la^{\ga (N-n)} \la^{(2-10\ga)n}.
\non%\label{Vniteee}
\eeq
Combining this with \eqref{eq:deltaUnormbound1} we arrive at
\beq
\| U_{n}-U'_n\|_{B_{n}}\leq  \la^{\ga (N-n)} \la^{(2-11\ga)n}.
\label{eq:deltaunormbound3 }
\eeq
Finally using \eqref{eq:deltaunormbound1 } and \eqref{eq:deltaunormbound3 } it is now straightforward to prove, for $\ga$ suitably small,
\beq
%\|\caF^{(N+1)}_n(U^{(N+1)}_{n-1}+ \nu^{(N+1)}_{n-1})-\caF^{(N)}_n(U^{(N)}_{n-1}+ \nu^{(N)}_{n-1})\|_{B_{n-1}}
\|\caF_n-\caF'_n\|_{B_{n-1}}
\leq  \la^{\ga(N-n)} \la^{(3-\frac{1}{4})n}+ \la^{\hf n}\|\nu_{n-1}-\nu'_{n-1}\|_{B_{n-1}}.
 %\label{eq: }
\non
\eeq
As in \eqref{eq: calnubound} we get
\beq
 \| \caL_n(\nu_{n}-\nu'_{n}) \|_{B_{n-1}}\leq  C\la^{-\frac{5}{2}} \| \nu_{n}-\nu'_{n} \|_{B_{n}}
  \label{eq: calnubound1}
%\non
\eeq
Hence for small $\ga$ we obtain inductively for $m\leq n\leq N$
\beq
 \| \nu_{n}-\nu'_{n} \|_{B_{n}}\leq  C\la^{\ga(N-n)}\la^{(3-\frac{1}{4})n}.
 % \label{eq: calnubound1}
\non
\eeq
This establishes the convergence of $ \nu^{(N)}_{n}$ to a limit that is independent on the 
short time cutoff.
 \end{proof}
 \begin{remark}\label{rem: cutoff1} Let us briefly indicate how the  cutoff \eqref{eq: regpdespace } can be accommodated to our scheme. We only need to modify the first RG step. For $n=N$ in \eqref{eq: vn+1new} the noise is replaced by the spatially smooth noise $\tilde\xi_{N-1}=\rho\ast\xi_{n-1}$ and 
 $\Gamma$ by $\tilde\Gamma$ where we use the cutoff $\chi(t-s)$ in \eqref{eq:Gala }.  $\tilde\Gamma$  is not infinitely smoothing but $s\tilde\Gamma v_{N-1}^N\in B_N$ nevertheless since at this scale
$v_{N-1}^N$ is as smooth as $\phi$ is.   

\end{remark}

%:
\section{Proof of Theorem \ref{main result} } \label{se:main result}

We are now ready to construct the solution $\phi^{(\ep)} $ of the $\ep$ cutoff equation
\eqref{eq: regpde }.  Recall that  formally $\phi^{(\ep)} $ is given on time interval
$[0,\la^{2m}]$ by eq. \eqref{eq: finalsolution} (with $n=m$) with $\phi_m$ given as the solution of eq. 
\eqref{eq:neweq1 } on  time interval
$[0,1]$. Hence we first need to study the $f$ iteration eq.  \eqref{eq: fn+1new}.
This is very similar to the $v$ iteration \eqref{eq: vn+1new} except there is no fixed point problem to be solved and there is no multiplicative $\la^{-2}$ factor. As  in \eqref{eq: vn+1new1tilde} for $v^{(N)}_{n}$ we study instead of  \eqref{eq: fn+1new} the localized iteration
\beq
\tilde f^{(N)}_{n-1}(\phi)=h_{n-1-m} %(\caL_n
s^{-1}\tilde f^{(N)}_n(s(\phi+\Gamma_{}(\tilde v^{(N)}_{n-1}(\phi)+\xi_{n-1})))% :=\caF_n(v^{(N)}_n,v^{(N)}_{n+1})
 \label{eq: fn+1new1}
%\label{eq: }
\eeq
for
$\tilde f_{n}^{(N)}=h_{n-m}f_{n}^{(N)}$. The following Proposition is immediate :

\begin{proposition}\label{prop: solution of fiteration} 
 Let $\tilde\nu_{n}^{(N)}\in \caW_{n}(B_n)$, $m\leq n\leq N$ be as in 
Proposition \ref{prop: solution of fp}. Then for $m\leq n\leq N$ $\tilde f_{n}^{(N)}\in \caW_{n}(B_n)$ and 
\beq
 f_{n}^{(N)}(\phi)=\phi+\eta_{n}^{(N)} +g_{n}^{(N)}(\phi).\label{eq: fdecompss}
%\non
\eeq
with
\beq
 \|\tilde g_{n}^{(N)}\|_{B_{n}}\leq \la^{\frac{3}{4}n}.\label{eq: solballforf}
%\non
\eeq
and $g_n^{(N)}$ converge in $\caW_n(B_n)$ as $N\to\infty$ to a limit $\tilde g_n\in\caW_n(B_n)$ which is independent on the short time cutoff.
\end{proposition}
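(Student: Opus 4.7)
My plan is to prove Proposition \ref{prop: solution of fiteration} by downward induction on $n$ starting at $n = N$, mirroring the structure of Proposition \ref{prop: solution of fp} but now without a fixed point since the iteration \eqref{eq: fn+1new1} is explicit. The base case $n = N$ is immediate: $f_N^{(N)}(\phi) = \phi$, and $\eta_N^{(N)} = \Ga_N^N\xi_N = 0$ since $\chi_{N-N} = \chi-\chi \equiv 0$, so $g_N^{(N)} = 0$. For the inductive step I would substitute the ansatz \eqref{eq: fdecompss} into \eqref{eq: fn+1new1}, which (before applying the $h_{n-1-m}$ cutoff) gives
\[
f_{n-1}^{(N)}(\phi) = \phi + \Ga v_{n-1}^{(N)}(\phi) + \Ga\xi_{n-1} + s^{-1}\eta_n^{(N)} + s^{-1} g_n^{(N)}\bigl(s\tilde\phi\bigr),
\]
with $\tilde\phi = \phi + \Ga v_{n-1}^{(N)}(\phi) + \Ga\xi_{n-1}$. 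Matching against the ansatz at level $n-1$ produces the identity $\eta_{n-1}^{(N)} = \Ga\xi_{n-1} + s^{-1}\eta_n^{(N)}$ together with the explicit recursion $g_{n-1}^{(N)}(\phi) = \Ga v_{n-1}^{(N)}(\phi) + s^{-1} g_n^{(N)}(s\tilde\phi)$.

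The only nontrivial step is verifying this noise identity; I expect this to be the main conceptual hurdle, because the two fresh inputs $\Ga\xi_{n-1}$ and $s^{-1}\eta_n^{(N)}$ must reassemble exactly into the intrinsic object $\eta_{n-1}^{(N)} = \Ga_{n-1}^N\xi_{n-1}$. I would check it at the kernel level using three ingredients: the decomposition $\chi_{N-(n-1)}(s) = \chi_1(s) + \chi_{N-n}(\la^{-2}s)$ of the bump \eqref{eq: gammaN-n}; the noise rescaling $\xi_n = \la^2 s\xi_{n-1}$ (immediate from $\xi_n = \la^{2n} s_{\la^n}\Xi$ and $s_\la s_{\la^{n-1}} = s_{\la^n}$); and the heat-kernel scaling $H_n(\la^{-2}t, \la^{-1}x) = \la^3 H_{n-1}(t,x)$. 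A direct computation then shows that $\la^2(s^{-1}\Ga_n^N s)$ has kernel $\chi_{N-n}(\la^{-2}\cdot)\,H_{n-1}$, so $\la^2(s^{-1}\Ga_n^N s) + \Ga = \Ga_{n-1}^N$ and the identity follows.

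With the identity in hand, the bound $\|\tilde g_n^{(N)}\|_{B_n} \leq \la^{3n/4}$ is induction on $n$. From Proposition \ref{prop: solution of fp} combined with \eqref{eq:unormbound } and \eqref{eq:Unormbound}, I have $\|v_{n-1}^{(N)}\|_{B_{n-1}} \leq C\la^{(1-6\ga)(n-1)}$, so Lemma \ref{lem: gammamap}(a) yields $\|h_{n-1-m}\Ga v_{n-1}^{(N)}\|_{B_{n-1}} \leq C(\la)\la^{(1-6\ga)(n-1)}$. For the composition term I check, using Lemma \ref{lem: gammamap}(a,b) and \eqref{eq: Gammaxibound}, that $\|s\tilde\phi\|_{\Phi_n} \leq \la^{1/2}\la^{-2\ga(n-1)} + C(\la)\la^{(1-6\ga)(n-1)} + \la^{-\ga n} \ll \la^{-2\ga n}$ for $\phi \in B_{n-1}$ and $n \geq n(\ga,\la)$, so $s\tilde\phi \in B_n$ and the composition is analytic in $\phi \in B_{n-1}$ with
\[
\|h_{n-1-m} s^{-1} g_n^{(N)}(s\tilde\phi)\|_{B_{n-1}} \leq C\la^{-1/2}\|g_n^{(N)}\|_{B_n} \leq C\la^{3n/4 - 1/2}.
\]
Choosing $\ga < 1/24$ and $\la$ small enough, both contributions fit inside $\la^{3(n-1)/4}$, closing the induction (and preserving analyticity, since every operation in the recursion preserves it).

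Finally, for convergence as $N \to \infty$ and short-time cutoff independence I would run the same Cauchy-difference argument as in the last part of Proposition \ref{prop: solution of fp}. Setting $\Delta g_n := g_n^{(N)} - g_n^{(N')}$ (or $g_n - g_n'$) and subtracting the recursions, the driving term $\Ga(v_{n-1}^{(N)} - v_{n-1}^{(N')})$ acquires the factor $\la^{\ga(N-n)}$ via \eqref{eq:deltaunormbound1 } and \eqref{eq:deltaunormbound3 }, while the composition contribution is controlled by a Cauchy estimate inside $B_n$ exactly as in Step 3. Iterating the resulting recursive bound from $n = N$ downwards gives $\|\Delta g_n\|_{B_n} \leq C\la^{\ga(N-n)}\la^{3n/4}$, proving convergence to a limit $\tilde g_n \in \caW_n(B_n)$ that does not depend on $\chi$.
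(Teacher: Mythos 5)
Your proposal is correct and follows essentially the same route as the paper: the explicit recursion $\tilde g_{n-1}^{(N)}(\phi)=h_{n-1-m}\bigl(\Gamma \tilde v^{(N)}_{n-1}(\phi)+s^{-1}\tilde g^{(N)}_n(s(\phi+\Gamma(\tilde v^{(N)}_{n-1}(\phi)+\xi_{n-1})))\bigr)$, the bound via Lemma \ref{lem: gammamap} and the estimate on $\tilde v^{(N)}_{n-1}$ giving $C(\la)\la^{(1-O(\ga))(n-1)}+C\la^{-1/2}\la^{3n/4}\leq\la^{3(n-1)/4}$, and convergence/cutoff-independence inherited from that of $v^{(N)}_n$. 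The only difference is presentational: you verify explicitly at the kernel level the identity $\Gamma+\la^2 s^{-1}\Gamma^N_n s=\Gamma^N_{n-1}$ (hence $\eta^{(N)}_{n-1}=\Gamma\xi_{n-1}+s^{-1}\eta^{(N)}_n$), which the paper leaves implicit in its derivation of the $g$-recursion.
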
 

\begin{proof} %As in \eqref{eq: vn+1new1tilde} we may write  \eqref{eq: fn+1new} as
%\beq
%\tilde f^{(N)}_{n-1}(\phi)=h_{n-1-m}\la^2(\caL_n\tilde f^{(N)}_n)(\phi+\Gamma_{}\tilde v^{(N)}_{n-1}(\phi))% :=\caF_n(v^{(N)}_n,v^{(N)}_{n+1})
 %\label{eq: fn+1new1}
%\label{eq: }
%\eeq
We have
$$
\tilde g_{n-1}^{(N)}(\phi)=h_{n-1-m}(\Gamma_{}\tilde v^{(N)}_{n-1}(\phi)+
s^{-1}\tilde g^{(N)}_n(s(\phi+\Gamma_{}(\tilde v^{(N)}_{n-1}(\phi)+\xi_{n-1})))
%\la^2(\caL_n 
$$ 
Since $ \| \tilde v^{(N)}_{n-1} \|_{B_{n-1}}\leq C\la^{(1-3\ga)(n-1)}$ Lemma  \ref{lem: gammamap}(b) implies
\beq
 \|\tilde g_{n-1}^{(N)} \|_{B_{n-1}}\leq  C(\la)\la^{(1-3\ga)(n-1)}+C\la^{-\frac{1}{2}}\la^{\frac{3}{4}n}\leq
 \la^{\frac{3}{4}(n-1)}
%  \label{eq: calnubound}
\non
\eeq
The convergence and cutoff independence follows from that of $v^{(N)}_{n}$.
\end{proof}

We need the following lemma: % for solving \eqref{eq:neweq1 }:
\begin{lemma}\label{lem: G1map}  
$G_1  $ is a bounded operator from $\caV_n$  to $\Phi_n$ and
 $G_1(h_{n-1-m}(\la^2\cdot) v)=G_1v $.
%with norm
%$$\|G_1\|\leq C(\la)\la^{-2(n-m)}.$$
\end{lemma}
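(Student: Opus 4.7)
The plan is to exploit the smoothness and decay of $G_1$'s integral kernel. Write $(G_1 v)(t,x) = \int_0^t\!\int_{\bbT_n} k(t-s,x-y)\,v(s,y)\,ds\,dy$ with $k(\tau,z) = (1-\chi(\tau))H_n(\tau,z)$. The factor $1-\chi$ confines $\tau$ to $[1,\infty)$, on which $H_n$ and all its derivatives are smooth with uniform Gaussian decay in $|z|$. Since $k \equiv 0$ for $\tau \leq 1$, one gets $(G_1v)(t,\cdot) \equiv 0$ for $t \leq 1$, so $\partial_t^i(G_1v)(1,x) = 0$ for $i \leq 2$, consistent with the boundary condition in $\Phi_n$.

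To obtain boundedness, I will pair the smooth kernel with $v \in \caV_n$ via the identity $v = K^{-1}(Kv)$ where $K^{-1} = (-\partial_t^2+1)(-\Delta+1)^2$. Approximating $v$ by $C_0^\infty$ elements (as in the definition of $\caV_n$) and integrating by parts in $(s,y)$ gives
\[
(G_1 v)(t,x) = \iint \tilde k(t-s,x-y)\,(Kv)(s,y)\,ds\,dy,
\qquad \tilde k := (-\partial_\tau^2+1)(-\Delta_z+1)^2 k,
\]
where $\tilde k$ is smooth, compactly supported in $\tau$, and exponentially decaying in $|z|$. A Cauchy--Schwarz on each unit cube $c_j \subset \bbR\times\bbT_n$ then yields
\[
|(G_1 v)(t,x)| \leq \|v\|_{\caV_n}\,\sum_j \bigl\|\tilde k(t-\cdot,\,x-\cdot)\bigr\|_{L^2(c_j)},
\]
with the sum bounded uniformly in $(t,x)$ thanks to the decay. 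Derivatives $\partial_t^i\partial_x^\alpha (G_1 v)$ with $i \leq 2$, $|\alpha| \leq 4$ come from differentiating $k$, which preserves smoothness and decay; this gives the full $\Phi_n$ bound.

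For the second assertion, I observe that the integrand of $(G_1 v)(t,\cdot)$ is supported in $\{s : t-s \in [1,2]\}$, so for $t \in [1,\tau_{n-m}]$ it lies in $\{s : s \leq \tau_{n-m}-1\}$. Using $\tau_{n-m} = \la^{-2(n-m)}$ one checks $\la^2(\tau_{n-m}-1) = \tau_{n-1-m} - \la^2$, hence $\la^2 s \leq \tau_{n-1-m} - \la^2$ throughout the integration region, which is exactly where $h_{n-1-m}(\la^2 s) = 1$ by definition of $h_k$. Thus multiplying $v$ by $h_{n-1-m}(\la^2\cdot)$ leaves the integrand unchanged and $G_1(h_{n-1-m}(\la^2\cdot)v) = G_1 v$. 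The only genuinely technical point I anticipate is the rigorous duality and integration-by-parts for distributional $v \in \caV_n$, which is routine once boundedness has been established on the dense $C_0^\infty$ subset used to complete $\caV_n$.
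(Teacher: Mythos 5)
Your overall strategy is the same as the paper's: write $v=(-\partial_t^2+1)(-\Delta+1)^2Kv$, integrate by parts onto the kernel, and estimate cube by cube with Cauchy--Schwarz; and your support argument for the identity $G_1(h_{n-1-m}(\la^2\cdot)v)=G_1v$ (using $s\le t-1\le\tau_{n-m}-1$ and $\la^2\tau_{n-m}=\tau_{n-1-m}$) is correct and in fact more explicit than what the paper records. However, there is a genuine error in your boundedness step. The kernel of $G_1$ is $(1-\chi(t-s))e^{(t-s)\Delta}$, and $1-\chi(\tau)$ is \emph{not} compactly supported: it vanishes for $\tau\le 1$ but equals $1$ for all $\tau\ge 2$. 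So your statements that the integrand is ``supported in $\{s: t-s\in[1,2]\}$'' and that $\tilde k$ is ``compactly supported in $\tau$'' are false (you appear to have transferred the support property of the kernel of $\Gamma$, which involves $\chi(\tau)-\chi(\tau/\la^2)$, to $G_1$). Moreover, the claimed uniform Gaussian decay fails at large times: on $\bbT_n$ the periodized heat kernel $H_n(\tau,\cdot)$ flattens out to the constant $\la^{3n}$ for $\tau\gtrsim\la^{-2n}$, and this constant mode survives the application of $(-\partial_\tau^2+1)(-\Delta+1)^2$. Hence the sum $\sum_j\|\tilde k(t-\cdot,x-\cdot)\|_{L^2(c_j)}$ is not controlled ``thanks to the decay''; if $t$ were unrestricted the bound would in fact grow linearly in $t$, so the uniform-in-$(t,x)$ estimate you assert does not hold as stated.

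The repair is exactly the point the paper makes: since $\Phi_n$ only involves $t\in[1,\tau_{n-m}]$ and the $s$-integral runs over $[0,t]\subset[0,\tau_{n-m}]$, while $\bbT_n$ is compact, only finitely many unit cubes contribute, each with a bounded $L^2$ kernel norm (the heat kernel and its derivatives are bounded for $\tau\ge1$). This yields $\|G_1v\|_{\Phi_n}\le C(\la,n)\|v\|_{\caV_n}$ with a constant depending on $n$ (and $m$), which is all the lemma claims — note it asserts boundedness, not uniformity in $n$, in contrast to the $s\Gamma$ bound of Lemma \ref{lem: gammamap}(a) where the genuine compact support of $\chi(\tau)-\chi(\tau/\la^2)$ does give a uniform constant. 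With the false support/decay claims replaced by this finite-time-horizon argument, your proof coincides with the paper's.
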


%\begin{proposition}\label{prop: solution of phi_m eq}  With the assumptions of Proposition  \ref{prop: solution of fp} the equation  \eqref{eq:neweq1mod } has, for $n=m+1$, a
%unique solution $\phi_m^{(N)}\in B_m$ and as $N\to\infty$ $\phi_m^{(N)}\to\phi_m\in B_m$  which is independent on the short time cutoff.
%\end{proposition} 
%\begin{proof}
%Recall that  $\phi\in B_m$ is defined on time interval $[0,1]$. Thus $(G_1v)(t)$, $t\in [0,1]$, depends on $v$ on $[0,1-\la^2]$. Hence $G_1v=G_1h_0v$ and we get
%\beq
%\|G_1(v_m^{(N)}(\phi)+\xi_m)\|_{\Phi_m}=\|G_1 h_0(v_m^{(N)}(\phi)+\xi_m)\|_{\Phi_m}\leq
%C(\la)(\la^{(1-6\ga)m}+\la^{-\ga m})
 % \label{eq: calnubound1}
%\non
%\eeq
%Hence the RHS of  \eqref{eq:neweq1 } maps $B_m$ to itself if $m\geq m(\la,\ga)$ and
%is readily seen to be a contraction. Convergence and cutoff independence as $N\to\infty$ follows then from that of $v_m^{(N}$.

%\end{proof}

\noindent {\it Proof of Theorem \ref{main result}  }.
We claim that if  $\Xi\in\caA_m$ the solution $\varphi^{(N)}$ of equation \eqref{eq: regpde } with $\ep=\la^N$ is
given by  (recall \eqref{eq: finalsolution}) 
\beq
%s_{\ep_n}
\varphi^{(N)}=s^{-m}\tilde f^{(N)}_m(0)%\phi^{(N)}_m)
 \label{eq: finalsolution1}
%\non
\eeq 
on the time interval $[0,\hf \la^{-2m}]$.
%We proceed by induction in {\it increasing} $n$. 
Let $\phi_n\in\Phi_n$ be defined inductively by $\phi_m=0$ and for $n>m$
\beq
 \label{eq:phinplus1 }
\phi_{n}= s(\phi_{n-1}+\Ga(\tilde v^{(N)}_{n-1}(\phi_{n-1})+\xi_{n-1})).%\non
\eeq
We claim that for all $m\leq n\leq N$ $\phi_n\in B_n$ and
\beq
 \label{eq:iteration for solution }
\phi_{n}= G_1 (\tilde v^{(N)}_{n}(\phi_{n})+\xi_{n}).%\non
\eeq
Indeed, this holds trivially for $n=m$ since the RHS vanishes identically on $[0,1]$.
Suppose $\phi_{n-1}\in B_{n-1}$ satisfies
\beq
 \label{eq:neweq1mod }
\phi_{n-1}= G_1 (\tilde v^{(N)}_{n-1}(\phi_{n-1})+\xi_{n-1}).%\non
\eeq
Then, first by  by  Lemma  \ref{lem: gammamap}(b) 
\beq
 \non%\label{eq:iteration for solution }
\|\phi_{n}\|_{\Phi_{n}}\leq \la^\hf\|\phi_{n-1}\|_{\Phi_{n-1}}+C(\la)\la^{-\ga n}\leq \la^{-2\ga n}%\non
\eeq
so that 
 $\phi_{n}\in B_{n}$.  Second, we have
by
 \eqref{eq:neweq1mod } and \eqref{eq:phinplus1 }
\baq
 \label{eq:phinplus1' }
\phi_{n}&=& s((G_1+\Ga)(\tilde v^{(N)}_{n-1}(\phi_{n-1})+\xi_{n-1}))=G_1\la^2s(\tilde v^{(N)}_{n-1}(\phi_{n-1})+\xi_{n-1})\non\\
&=& G_1(h_{n-1-m}(\la^2\cdot)\tilde v^{(N)}_{n}(\phi_{n})+\xi_{n})=G_1(\tilde v^{(N)}_{n}(\phi_{n})+\xi_{n})
\eaq
where in the third equality we used the RG iteration \eqref{eq: vn+1new1tildeee} and in the last equality
Lemma \ref{lem: G1map}. 

From \eqref{eq: fn+1new1} we have since $\phi_m=0$
\beq
\tilde f^{(N)}_{m}(0)=h_{0} %(\caL_n
s^{-1}\tilde f^{(N)}_{m+1}
(\phi_{m+1})=h_{0} h_{1}(\cdot/\la^2) %(\caL_n
s^{-2}\tilde f^{(N)}_{m+2}(\phi_{m+2})=h_{0} 
s^{-2}\tilde f^{(N)}_{m+2}(\phi_{m+2})
%(s(\phi+\Gamma_{}(\tilde v^{(N)}_{n-1}(\phi)+\xi_{n-1})))% :=\caF_n(v^{(N)}_n,v^{(N)}_{n+1})
% \label{eq: fn+1new1}
%\label{eq: }
\non
\eeq
where we used \eqref{eq: h la commu}. Iterating we get
%\beq
%\tilde f^{(N)}_{n-1}(\phi_{n-1})=h_{n-1-m} %(\caL_n
%s^{-1}\tilde f^{(N)}_n
%(\phi_n)=h_{n-1-m} h_{n-m}(\cdot/\la^2) %(\caL_n
%s^{-2}\tilde f^{(N)}_{n+1}(\phi_{n+1})=\dots
%(s(\phi+\Gamma_{}(\tilde v^{(N)}_{n-1}(\phi)+\xi_{n-1})))% :=\caF_n(v^{(N)}_n,v^{(N)}_{n+1})
% \label{eq: fn+1new1}
%\label{eq: }
%\non
%\eeq
%Since $h_{n-1-m} $ is supported on $[0,\tau_{n-1-m}-\hf\la^2]$ and $ h_{n-m}(\cdot/\la^2)=1$
%on $[0,\tau_{n-1-m}-\la^4]$ their product equals  $h_{n-1-m} $ and we get since $\phi_m=0$
\beq
\tilde f^{(N)}_{m}(0)=h_{0} %(\caL_n
s^{-(N-m)}\tilde f^{(N)}_N=h_{0}h_{N-m}(\cdot/\la^{2(N-m)})s^{-(N-m)}\phi_N=h_{0}s^{-(N-m)}\phi_N
\label{eq:tildefn0 }
\eeq
again by \eqref{eq: h la commu}.
Now $\phi_N\in B_N$ solves \eqref{eq:iteration for solution } with
 $\tilde v^{(N)}_N(\phi)=h_{N-m} v^{(N)}_N(\phi)$ with $v^{(N)}_N$ given by \eqref{eq: first v}.
Since $h_{N-m} =1$ on $[0,\tau_{N-m}-\la^2]$ we obtain
\beq
\non
% \label{eq:iteration for solution }
\phi_{N}= G_1 ( \tilde v^{(N)}_{n}(\phi_{n})+\xi_{n})=G_1 ( v^{(N)}_{n}(\phi_{n})+\xi_{n}).%\non
\eeq
and thus $\varphi^{(N)}=s^{-N}\phi_{N}$ solves
\eqref{eq: finalsolution1} on the time interval $[0,\la^{-2m}]$. \eqref{eq:tildefn0 } then gives
\beq
\non
% \label{eq:iteration for solution }
h_0(\la^{-2m}\cdot)\varphi^{(N)}=s^{-m}\tilde f^{(N)}_{m}(0)%\non
\eeq
so that \eqref{eq: finalsolution1} holds on the time interval $[0,\hf \la^{-2m}]$.

By Proposition \ref{prop: solution of fiteration} %and \ref{prop: solution of phi_m eq} 
$f^{(N)}_m(%\phi^{(N)}_m
0)$ converges in $\caV_m$ to a limit $\psi_m$ which is independent on the
short distance cutoff. %cutoff function $\chi$ in  \eqref{eq: regpde }. That is: if we run the iteration with the cutoff \eqref{eq: gammaN-ndef} $\psi_m$ is independent of $\chi'$. It is obviously also independent of the $\chi$ that
%enters the inductive steps.
Convergence in  $\caV_m$ implies convergence in $\caD'([0,1]\times \bbT_m)$. 
The claim follows from continuity of $s^{-m}:\caD'([0,1]\times \bbT_m)\to \caD'([0,\la^{2m}]\times \bbT_1)$.
 \qed

%:
\section{Kernel estimates}\label{se:kernel estimates}

In this Section we prove Lemmas  \ref{lem: gammamap}, \ref{lem: G1map}  and  \ref{lem: heatkernel} and give bounds for the various kernels entering the  proof of Proposition \ref{prop: mainproba}.
%\vskip 2mm
\subsection{Proof of Lemma  \ref{lem: gammamap} and \ref{lem: G1map}  }

%\noindent {\it Proof of Lemma \ref{lem: gammamap}} 
{\it Lemma  \ref{lem: gammamap} (a) and Lemma  \ref{lem: G1map} }
Let
$v\in C_0^\infty(\bbR_+\times\bbT_{n-1})$. Then
\beq
  s\Ga v(t)= \int_0^\infty k(\la^2 t-s)e^{(t-s)\Delta} v(s)ds
  %\label{eq: gaagain}%
  \non
\eeq
where $k(\tau)=\la^\hf(\chi(\tau)-\chi(\tau/\la^2))$ vanishes for $\tau\leq \la^2$. Hence 
$ s\Ga v\in C_0^\infty([1,\infty)\times\bbT_n)$. Next,  write
 $v= (-\partial_t^2+1)(-\Delta+1)^2Kv$ so that setting
$w=Kv$ we have
 \beq
  s\Ga v(t)= \int_\bbR k(\la^2 t-s)(-\Delta+1)^2e^{(\la^2t-s)\Delta} (-\partial_s^2+1)w(s)ds
  \label{eq: gaagain}%\non
\eeq
Integrating by parts  we get
 \beq
  s\Ga v(t)= \int_\bbR ((-\partial_s^2+1)k(\la^2t-s)(-\Delta+1)^2e^{(\la^2t-s)\Delta}) w(s)ds
  .
  \non
\eeq
The  kernels $\partial^a_t(k(\la^2t-s)\partial^\al_xe^{(\la^2t-s)\Delta}(x-y))$ are smooth, exponentially decreasing in $|x-y|$ and  and supported on $\la^2t-s\in [\la^2,2]$ for all $a$ and $\al$. Hence
the corresponding operators $O_{a\al}$ satisfy
$$
|(O_{a\al}1_{c_i}w)(t,x)|\leq C(\la)e^{-cd(i,(t,x))}\|w\|_{L^2(c_i)}
$$
which in turn yields our claim
$
\|s\Ga v\|_{\Phi_n}\leq C(\la) \|v\|_{\caV_{n-1}}
$. Hence $s\Gamma$ extends to a bounded operator  from $\caV_{n-1}$  to $\Phi_n$.

$G_1v$  is given by \eqref{eq: gaagain} with $k$ is replaced by
$1-\chi(t-s)$. The time integral is confined to  $[0,\tau_{n-m}]$ so that $\|G_1 v\|_{\Phi_n}\leq C(\la,n) \|v\|_{\caV_{n}}$.

{\it Lemma  \ref{lem: gammamap} (b)}. Recall $(s\phi)(t,x)=\la^\hf\phi(\la^2t,\la x)$ on $[\la^{-2},\tau_{n-1-m}]\times\bbT_{n-1}$.  Since $\la< 1$ we then get $\|s\phi\|_{\Phi_n}\leq \la^\hf\|\phi\|_{\Phi_{n-1}}$. 
Next, let $v\in C_0^\infty(\bbR_+\times\bbT_{n})$ and $w=Kv$.
 First write
$$Ks^{-1}v=
Ks^{-1}(-\partial_t^2+1)(-\Delta+1)^2w=K_1(-\la^4\partial_t^2+1)K_2(-\la^2\Delta+1)^2s^{-1}w.
$$
Next, $K_1(-\la^4\partial_t^2+1)=\la^4+(1-\la^4)K_1$ and $$K_2(-\la^2\Delta+1)^2=\la^4+2\la^2(1-\la^2)(-\Delta+1)^{-1}+(1-\la^2)^2K_2.
$$
Thus we need to show the operators $K_i$, $(-\Delta+1)^{-1}$ and $\la^\hf s^{-1}$ are bounded
in the norm $\sup_i\|\cdot\|_{L^2(c_i)}$ uniformly in $\la$. For  $K_i$ this follows from
the bounds \eqref{eq: K_1def} and \eqref{eq: K2bounded} and for the third one  from
$
(-\Delta+1)^{-1}(x,y)\leq Ce^{-c|x-y|}|x-y]^{-1}
$. 
Finally, let $c_i/\la:=\{(t/\la^2,x/\la)|(t,x)\in c_i\}$
\beq
\|\la^\hf s^{-1} w\|_{L^2(c_i)}^2=\la^5\int_{c_i/\la}|w|^2\leq\la^5\sum_{c\cap (c_i/\la)\neq\emptyset}%
\|w\|
_{L^2(c)}^2\leq C %\label{eq: }
\non
\eeq

%\vskip 2mm

\noindent {\it Lemma  \ref{lem: gammamap} 
(c) } Let $\phi\in C^{2,4}(\bbR\times\bbT_n)$ and $v\in C_0^\infty(\bbR_+\times\bbT_n)$ and set again $w=Kv$ so that 
\beq
\phi v=\phi(-\partial_t^2+1)(-\Delta+1)^2w%
%(-\partial_t^2+1)(-\Delta+1)^2(\phi w)+(-\partial_t^2+1)\sum_{|\al|\leq 3}
%\partial_x^\al(w\partial_x^{\be}\phi)%
%\label{eq: tocommute}
\non
\eeq
Using $\phi\partial_t^2f=\partial_t^2(\phi f)-2\partial_t(\partial_t\phi f)+\partial_t^2\phi f$ and similar commutings for $\Delta$ we get
\beq
K(\phi v)=%\phi w+2\partial_tK_1(\partial_t\phi w)-K_1(\partial_t^2 \phi w).
\phi w+\sum_a\caO_a(\phi_aw)
\non
\eeq
where the operators $\caO_a$ belong to the set $\{\partial_t^nK_1, \partial^\al_xK_2,\partial_t^nK_1\partial^\al_xK_2\}$ with $n\leq 1$ and $|\al|\leq 3$. The functions $\phi_a$ are multiples of $\partial^m_t\partial_x^\beta\phi$ with $m\leq 2$ and $|\beta|\leq 4$ and hence bounded in sup norm by $C\|\phi\|_{C^{2,4}}$. 
We get 
$$
\|\phi v\|_{\caV_n}\leq C\|\phi\|_{C^{2,4}}\max_a\sup_i\sum_j\|\caO_a\|_{L^2(c_j)\to L^2(c_i)}
$$
The operators $\partial_tK_1$ and $K_1$ have bounded exponentially decaying kernels. The operators $\partial^\al_xK_2$ are bounded in $L^2$ hence from $L^2(A)\to L^2(B)$ with $A,B\subset\bbT_n$.
Moreover their kernels $\partial^\al_xK_2(x-y)$ are smooth for $x-y\neq 0$ and exponentially decaying.
We conclude 
$$
\|\caO_a\|_{L^2(c_j)\to L^2(c_i)}\leq Ce^{-c|i-j|}
$$
and then
$$
\|\phi v\|_{\caV_n}\leq C\|\phi\|_{C^{2,4}}\|v\|_{\caV_n}
.$$ 
\qed

\subsection{Proof of Lemma \ref{lem: heatkernel} } 
%\noindent {\it Proof of Lemma \ref{lem: heatkernel} } 

Let $
H(t,x)=e^{t\Delta}(0,x)=(4\pi s)^{-3/2}e^{-x^2/4t}
$
be the heat kernel on $\bbR^3$. Then
\beq
H_n(t,x)=\sum_{i\in\bbZ^3}H(t,x+\la^{-n}i).
\label{eq:heatkernelf }
\eeq
Denoting  $\ep=\la^{2(N-n)}$ and separating the $i=0$ term we have 
 \beq
\bbE \eta^{(N)}_{n}(t,x)^2=\int_0^t(8\pi s)^{-3/2}(\chi(s)^2-\chi(s/\ep^2)^2)ds+\al(t)%\caO(e^{-c\la^{-2n}})
\label{eq:eeeta }
%\non
\eeq 
where
$$|\al(t)|\leq \sum_{i\neq 0}\int_0^{2}(8\pi s)^{-3/2}e^{-i^2/(4s\la^{2n})}\leq  Ce^{-c\la^{-2n}}
.
$$
Let ${\al'}(t)$ have the lower cutoff replaced by $\chi'$. Then
$$
|\al(t)-{\al'}(t)\|\leq C%\int_{\la^2\ep}^{2\ep}
\int s^{-3/2}|\chi(s/\ep'^2)^2-\chi(s/\ep^2)^2|e^{-\la^{-2n}/4s}ds\leq Ce^{-c\la^{-2N}}\|\chi-\chi'\|_\infty.
$$
Denote  the first term in \eqref{eq:eeeta } by  $\beta(t,\ep)$ and $\beta'(t,\ep)$ where the lower cutoff is $\chi'$. Then
$$
\beta'(\infty,\ep)=\ep^{-1}\int_0^\infty(8\pi s)^{-3/2}(\chi(\ep^2s)^2-\chi'(s)^2)ds=\ep^{-1}\rho'-\rho
$$
%with $\rho'=\int_0^{\infty}(8\pi s)^{-3/2}(1-\chi'(s)^2)ds<\infty$. 
So
$$
\delta_n^{(N)}(t)=\al(t)+\rho-\ga(t,\ep)
$$
with 
$$
\ga(t,\ep)=\beta(\infty,\ep)-\beta(t,\ep)=\int_t^\infty(8\pi s)^{-3/2}(\chi(s)^2-\chi(s/\ep^2)^2)ds\leq Ct^{-\hf}.
$$
Moreover
$$
|\ga(t,\ep)-\ga'(t,\ep)|=\int_t^\infty(8\pi s)^{-3/2}|\chi(s/\ep^2)^2-\chi'(s/\ep^2)^2|ds\leq Ct^{-\hf}\|\chi-\chi'\|_\infty1_{[0,\ep]}(t).
$$
%Thus
%$$
%|\delta_n^{(N)}(t)-\delta'_n^{(N)}(t)|\leq|\al_n^N(t)-\al'_n^N(t)|+|\ga(t,\ep)-\ga'(t,\ep)|\leq
%C(t^{-\hf}1_{[0,\la^{2(N-n)}]}(t)+e^{-c\la^{-2N}})\|\chi-\chi'\|_\infty
%$$
\qed

\subsection{Covariance and response function bounds}

We prove now bounds for the  covariance and response kernels \eqref{eq: gamman-ndef} and  \eqref{eq:Cndef } that are needed for the probabilistic estimates in Section \ref{se:noise}.  These kernels are translation invariant in the spatial variable and we will denote $C^{(N)}_n(t',t, x',x)$ simply by
$C^{(N)}_n(t',t, x'-x)$ and similarly for the other kernels. As before primed kernels and fields have the lower cutoff 
 $\chi'$.
%Denote  by $\eta'^{(N)}_{n}$ the corresponding field \eqref{eq:etandefinition } and 
We need to introduce  the mixed covariance
\beq
C'^{(N)}_n(t',t, x'-x):= \bbE \eta'^{(N)}_{n}(t',x') \eta^{(N)}_{n}(t,x)%G_n(t'-t)\int_0^tG_n(2s)(\chi(s)-\chi(\la^{-2n}s))^2dsG_n(t'-t)
%e^{(t'-t)\Delta}
%\int_0^tH_n(t'-t+s,x'-x)\chi_{N'-n}(t'-t+s)\chi_{N-n}(s)ds:=C^{(N',N)}_n(t',t, x'-x)
\label{eq:CNN'def }
\eeq
Let us define 
\baq
 \caC_n(\tau,x)&=&\sup_{|t'-t|=\tau}\sup_{N\geq n} C'^{(N)}_n(t',t, x)
 %\label{eq: }
\non
\\
 \caC^N_n(\tau,x)&=&\sup_{|t'-t|=\tau} |C'^{(N)}_n(t',t, x)-C^{(N)}_n(t',t, x)|
 %\label{eq: }
\non
\\
\caG_n(t,x)&=&\sup_{N\geq n} \Gamma^N_n(t, x) 
 %\label{eq: }
\non
\\
\caG^N_n(t,x)&=&|\Gamma'^{N}_n(t, x) -\Gamma^N_n(t, x) |
%\label{eq: }
\non
\eaq

The regularity of these kernels is summarized in
 
\begin{lemma}\label{lem: greg}  $\caC_n\in L^p(\bbR\times\bbT_n)$   uniformly in $n$
for  $p<5$ 
and 
\beq
 \|\caC_n^N\|_p\leq C\la^{\ga_p(N-n)}\|\chi-\chi'\|_\infty.\label{eq: cacnnbound}
%\non
\eeq
for $\ga_p>0$ for $p<5$.
 $\caG_n\in L^p(\bbR\times\bbT_n)$   uniformly in $n$
for  $p<5/3$ 
and 
\beq
 \|\caG_n^N\|_p^p\leq C\la^{\ga'_p(N-n)}
 \|\chi-\chi'\|_\infty.\label{eq: cacnnbound}
%\non
\eeq
for $\ga'_p>0$ for $p<5/3$.
 
\end{lemma}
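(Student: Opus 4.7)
\noindent\textit{Proof sketch.}
My plan is to reduce each of the four statements to explicit heat-kernel and cutoff bounds, using parabolic scaling to identify the $L^p$ thresholds. The common point is that both $p < 5/3$ (for $\caG_n$) and $p < 5$ (for $\caC_n$) reflect the parabolic dimension $d + 2 = 5$ in $d = 3$; near these endpoints the pure heat-kernel integrals are logarithmically divergent, so the estimates are borderline.

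For the response function $\caG_n$, I start from $\Gamma^N_n(t,x) = \chi_{N-n}(t) H_n(t,x)$ and observe that $\chi(\la^{-2(N-n)}t)\to 0$ as $N\to\infty$ for each fixed $t > 0$, so $\caG_n(t,x) = \chi(t) H_n(t,x)$; the non-zero periodic images in \eqref{eq:heatkernelf } are $O(e^{-c\la^{-2n}})$ and negligible. A Gaussian integration gives $\int H(t,\cdot)^p\, dx \sim t^{-3(p-1)/2}$, so $\int_0^2 t^{-3(p-1)/2}\, dt$ converges iff $p < 5/3$, uniformly in $n$. For $\caG^N_n$, the difference $\chi_{N-n} - \chi'_{N-n}$ equals $\chi'(\la^{-2(N-n)}t) - \chi(\la^{-2(N-n)}t)$ and is supported on the thin slab $t \in [\ep^2, 2\ep^2]$, where $\ep := \la^{N-n}$ (because $\chi = \chi' = 1$ on $[0,1]$), and integrating $H^p$ over this slab yields $\|\caG^N_n\|_p^p \lesssim \ep^{5-3p}\|\chi-\chi'\|_\infty$.

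For the covariance $\caC_n$, the substitution $u = \tau + 2s$ in \eqref{eq:Cndef } converts the integral into $\caC_n(\tau, x) \lesssim \int_\tau^{\tau+2} H_n(u, x)\, du$ after bounding the cutoffs by one, and a standard change of variables $v = x^2/4u$ yields the anisotropic bound $\caC_n(\tau, x) \lesssim (\tau + |x|^2)^{-1/2}$ on bounded regions with Gaussian decay outside. Parabolic rescaling $\tau = R^2 \tilde\tau$, $x = R \tilde x$ reduces $\int (\tau + |x|^2)^{-p/2}\, d\tau\, dx$ on $\{\tau + |x|^2 \leq 1\}$ to $\int_0^1 R^{4-p}\, dR$, finite iff $p < 5$. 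For $\caC^N_n$ there are two sub-cases according to the sign of $t' - t$. When the primed cutoff sits on the heat-kernel variable (say $t' \geq t$), the factor $[\chi - \chi']_{N-n}(\tau + s)$ forces $\tau + s \in [\ep^2, 2\ep^2]$ and together with $s \geq \ep^2$ it pins $\tau, s = O(\ep^2)$, giving the pointwise bound $|\caC^N_n|(\tau, x) \lesssim \ep^{-1} e^{-cx^2/\ep^2}\|\chi-\chi'\|_\infty$ on $|\tau| \lesssim \ep^2$. In the reversed case the primed cutoff acts on $s$ alone, so $\tau$ can be as large as $O(1)$ while still forcing $s = O(\ep^2)$; the pointwise bound is then $\lesssim \ep^2 H_n(\tau, x)\|\chi-\chi'\|_\infty$. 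Splitting the $L^p$ integral at $\tau = \ep^2$ (parabolic rescaling on $\tau \leq \ep^2$ contributing $\ep^{5-p}$; and $\ep^{2p}\int_{\ep^2}^2 \tau^{3(1-p)/2}\,d\tau$ on $\tau \geq \ep^2$ contributing $\ep^{\min(5-p,\,2p)}$) yields $\|\caC^N_n\|_p^p \lesssim \ep^{\min(5-p, 2p)}\|\chi-\chi'\|_\infty^p$, a positive power of $\ep$ for all $p < 5$.

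I expect the main obstacle to be the sharpness of the thresholds $p = 5$ and $p = 5/3$: both saturate the parabolic scaling and no slack is available near the endpoints. In the difference bounds, the positive exponent $\ga_p$ comes from the genuinely thin parabolic width $\ep$ of the support of $\chi_{N-n} - \chi'_{N-n}$, but the bookkeeping must handle both sub-cases (the primed cutoff sitting on either time variable) and both sub-regions ($|\tau| \lesssim \ep^2$ versus $\ep^2 \lesssim \tau \lesssim 1$) so that the exponents match across the crossover scale $\tau \sim \ep^2$.
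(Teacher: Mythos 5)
Your argument is correct and essentially the paper's own: bound the cutoff factors by indicators to reduce everything to heat-kernel integrals with Gaussian decay (the bound $\caC_n(\tau,x)\lesssim e^{-cx^2}(x^2+\tau)^{-1/2}1_{[0,2]}(\tau)$ and $\caG_n(t,x)\lesssim t^{-3/2}e^{-cx^2/t}1_{[0,2]}(t)$), read off the thresholds $p<5$ and $p<5/3$ by parabolic scaling, and get the positive power $\la^{\ga_p(N-n)}$ from the $O(\la^{2(N-n)})$-thin support of $\chi_{N-n}-\chi'_{N-n}$. Your explicit two-case/two-region bookkeeping for $\caC^N_n$ (giving the exponent $\min(5-p,2p)$ across the crossover $\tau\sim\la^{2(N-n)}$) is just an unwound version of the paper's scaling identity $c_M(\tau,x)=\la^{-M}c_0(\la^{-2M}\tau,\la^{-M}x)$ applied with $s\in[0,2\la^{2M}]$ and $\tau$ up to $O(1)$.
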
 

\begin{proof} As in  \eqref{eq:Cndef } we have, for $t'\geq t$:
\beq
C'^{(N)}_n(t',t, x'-x)=\int_0^tH_n(|t'-t|+2s,x'-x)\chi^1_{N-n}(t'-t+s)\chi^2_{N-n}(s)ds
\label{eq:CNN'defi }
\eeq
where $\chi^1=\chi'$, $\chi^2=\chi$ or vice versa depending on $t'>t$ or  $t'<t$. The heat kernel $H_n$ is pointwise positive.  
 Using  $\chi^1_{N-n}(t'-t+s)\chi^2_{N-n}(s)\leq 1_{[0,2]}(s)1_{[0,2]}(t'-t)$ we
may bound
\beq
C'^{(N)}_n(t',t,x)\leq C_n(t'-t,x)
1_{[0,2]}(t'-t)
\label{eq:c'nnbound}
\eeq
where %$C(t'-t,x)$ is the kernel $\caC(0,x)$ of the operator 
\beq
 C_n(\tau,x)=\int_0^2 H_n(\tau+2s,x)ds%\label{eq: }
\non
\eeq
From \eqref{eq:heatkernelf } we get
%Recalling that $H_n$ is the heat kernel  on $\bbT_n$ we have
\beq 
H_n(\tau,x)=\sum_{m\in \bbZ^3}(4\pi t)^{-3/2}e^{-\frac{(x+\la^{-n}m)^2}{4t}}.%
%c(\tau,x+\epsilon_n^{-1}m)
\label{eq:persum1 }
%\non
\eeq
Therefore
\beq C_n(\tau,x)=\sum_{m\in \bbZ^3}%(4\pi t)^{-3/2}e^{-\frac{(x+\epsilon_n^{-1}m)^2}{4t}}%
c(\tau,x+\la^{-n}m)
\label{eq:persum }
%\non
\eeq
where  
\beq
 c(\tau,x)=\int_0^2(4\pi (\tau+2s))^{-3/2}e^{-\frac{x^2}{4(\tau+2s)}}ds\leq
  C(e^{-cx^2}(x^2+\tau)^{-\hf}1_{\tau\in[0,2]}+\tau^{-3/2}e^{-cx^2/\tau}1_{\tau>2})).\label{eq:ctaudeco }
%\non
\eeq
%Since $\tau\leq 2$ we may bound
%\beq
 %c(\tau,x)\leq Ce^{-cx^2}(x^2+\tau)^{-\hf}
%\label{eq: cbound}
%\non
%\eeq
Combining with  \eqref{eq:persum } and \eqref{eq:c'nnbound} (with $\chi'=\chi$)
\beq
\caC_n(\tau,x)\leq Ce^{-cx^2}(x^2+\tau)^{-\hf}1_{[0,2]}(\tau)
\label{eq: cbound}
%\non
\eeq
and so  $\caC_n\in L^p$ if $p<5$. 
To get \eqref{eq: cacnnbound} use
$$|\chi^1_{k}(t'-t+s)\chi^2_{k}(s)-\chi^1_k(t'-t+s)\chi^2_{k}(s)|\leq 1_{[0,2\la^{2k}]}(s)1_{[0,2]}(t'-t)
\|\chi-\chi'\|_\infty.$$
Hence 
\beq
\caC_n^N(\tau,x)\leq \sum_{m\in \bbZ^3}
 c_{N-n}(\tau,x+\la^{-n}m)1_{[0,2]}(\tau)\|\chi-\chi'\|_\infty %\label{eq: }
\non
\eeq
where
\beq
c_{M}(\tau,x):=\int_0^{2\la^{2M}}(4\pi (\tau+2s))^{-3/2}e^{-\frac{x^2}{4(\tau+2s)}}ds=\la^{-M}
c_{0}(\la^{-2M}\tau,\la^{-M}x).
 \label{eq:cMtaux }
%\non
\eeq
Hence using \eqref{eq:ctaudeco }
$$
\|c_{M}1_{[0,2]}\|_p^p=\la^{(5-p)M}\|c_{0}1_{[0,2\la^{-2M}]}\|_p^p\leq 
\la^{(5-p)M}(1+\int \tau^{\frac{3}{2}(1-p)}1_{[2,2\la^{-2M}]}dt)\leq
C\la^{\ga M}$$
with $\ga>0$ for $p<5$. This yields  \eqref{eq: cacnnbound}.

%To estimate $ \caS_n(t,x)$ we get first
In the same way
\beq
 \caG_n(t,x)\leq Ct^{-3/2}e^{-cx^2/t}1_{[0,2]}(t)
 \label{eq:caGbound }
%\non
\eeq 
which is in  $L^p$ for $p<5/3$. \eqref{eq: cacnnbound} follows then as above.
\end{proof}
We will later also need the properties of the kernel
\beq
S^{(N)}_n(t',t,x):= C^{(N)}_n(t',t,x)\Ga^{N}_n(t',t,x).\label{eq: Skernel}
%\non
\eeq
Set
\beq
 \caS_n(\tau,x):=\sup_{|t'-t|=\tau}\sup_{N\geq n} \caS^{(N)}_n(t',t, x)%\leq \caC_n(\tau,x) \caG_n(\tau,x)
\label{eq: calSdefin}
%\non
\eeq
We get from  \eqref{eq: cbound} and \eqref{eq:caGbound }
\beq
 \caS_n(t,x)\leq Ct^{-2}e^{-cx^2/t}1_{[0,2]}(t)\in L^p,\ \ \  p<5/4.
 \non
 \eeq
Finally 
let $S'^{N}_n$ have $\chi'$ in all the lower cutoffs in $C^{N}_n$ and $\Ga^{N}_n$ and set
\beq
\caS'^{(N)}_n(\tau,x)=\sup_{|t'-t|=\tau} |S'^{(N)}_n(t',t, x)-S^{(N)}_n(t',t, x)| .%\label{eq: }
\non
\eeq
Then
\beq
\| \caS_n^N\|_p^p\leq C\la^{\ga''_p(N-n)}\|\chi-\chi'\|_\infty.\label{eq: casnbou}
%\non
\eeq
for some $\ga''_p>0$ for $p<5/4$.

\begin{remark}\label{rem: other initial1}  Recall from Remark \ref{rem: other initial} that
or the initial condition \eqref{eq: regpdeini } the covariance of  $\eta^{(N)}_n$ is the stationary one
\eqref{eq:Cndefstat }. Hence Lemma \ref{lem: greg}  holds for it as well.\end{remark}

\section{Proof of Proposition \ref{prop: mainproba}}\label{se:noise}

In this section we prove Proposition \ref{prop: mainproba}.
%\begin{proposition}\label{pro: proba}
%Let $\zeta^{(N)}_n$ be any of the fields in \eqref{eq: noisefields}. There exists
%$b$ s.t. for all $R>0$ and $p<\infty$
%\beq
%\bbP(\exists N\geq n: \|\zeta^{(N)}_n\|_{\caV_n}\geq R)\leq C(p,\la)R^{-p}\la^{-5n}
%\label{eq: noisestzetai}
%\non
%\eeq
%The same estimate holds for $ \|s\Gamma \xi_{n-1}\|_{\Phi_n}$ as well (which is $N$-independent).
%\end{proposition} 
The strategy is straightforward. We need to compute the covariances of the various fields in \eqref{eq: noisefields} and establish
enough regularity for them. Covariance estimate is all we need since the probabilistic bounds are readily derived from it. The covariances have of course expressions in terms of Feynman diagrams only one of which is diverging as $N\to\infty$.  The renormalization constant $b$ is needed to cancel that divergence. We don't introduce the terminology of diagrams since the ones that enter are simple enough to be expressed without that notational device.
 
\subsection{Covariance bound}

 We will deduce Proposition \ref{prop: mainproba} from a covariance bound for  the fields in \eqref{eq: noisefields}.  Let $\zeta_n^{(N)}(t,x)$ or  $\zeta_n^{(N)}(t,x,s,y)$ be any of the fields in \eqref{eq: noisefields}. 
Let
  \beq
 \tilde K(t',t,x)=e^{\hf\dist(t',I)}K(t'-t,x)h_{n-m}(t)
 \label{eq:ktildedefi}
\eeq
where $I=[0,\la^{-2(n-m)}]$ and define 
 $$\rho^{(N)}_n=\tilde K \zeta_n^{(N)}\ \ \ {or}\ \ \ \rho^{(N)}_n=\tilde K\otimes \tilde K
 \zeta_n^{(N)}
.$$
Then
  \beq
\|K\tilde\zeta^{(N)}_n\|_{L^2(c_i)}\leq Ce^{-\hf \dist (i_0,I)}\|\rho^{(N)}_n\|_{L^2(c_i)}.
\label{eq:rhozeta}
\eeq
where $i_0$ is the time component of $i$ and similarly for the bi-local case.
We bound the covariance of $\rho^{(N)}_n$:

\begin{proposition}\label{pro: proba1}
There exist renormalization constants $m_1,m_2, m_3$   and $\ga>0$ s.t.  for all $0\leq n\leq N<\infty$
\baq
\bbE\rho_n^{(N)}(t,x)^2&\leq& C \label{eq: twopoint}\\
\bbE({\rho'}_n^{(N)}(t,x)-\rho_n^{(N)}(t,x))^2&\leq& C\la^{\ga (N-n)}\|\chi-\chi'\|_\infty
  \label{eq: twopoint1}\\
  \bbE\rho_n^{(N)}(t,x,s,y)^2&\leq& Ce^{-c(|t-s|+|x-y|)}  \label{eq: twopoint2}\\
\bbE({\rho'}_n^{(N)}(t,x,s,y)-\rho_n^{(N)}(t,x,s,y))^2&\leq& C\la^{\ga (N-n)}e^{-c(|t-s|+|x-y|)} \|\chi-\chi'\|_\infty
  \label{eq: twopoint3}
\eaq

\end{proposition}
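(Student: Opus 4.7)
The plan is to decompose each noise field $\zeta^{(N)}_n$ in \eqref{eq: noisefields} into its Wiener chaos components, express the covariance of $\rho^{(N)}_n=\tilde K\zeta^{(N)}_n$ as a sum of Feynman-type integrals in the covariance kernel $C^{(N)}_n$ of $\eta_n$ and the response kernel $\Ga^N_n$, and bound each such integral uniformly in $N$ by Hölder together with the $L^p$ estimates of Lemma \ref{lem: greg}. For the Wick powers $\eta_n$, $\eta_n^2-\rho_{N-n}$, $\eta_n^3-3\rho_{N-n}\eta_n$ the covariance factorises as $k!\,C^{(N)}_n(t,t',x-x')^k$ ($k=1,2,3$), and \eqref{eq: twopoint} follows from the uniform bound $\caC_n^k\in L^1_{\mathrm{loc}}$ combined with the smoothing provided by $\tilde K$. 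The composite fields $\mathfrak z_n,z_n,\omega_n$ are polynomials of order at most six in $\eta_n$, so their chaos expansion produces finitely many Feynman diagrams; all but one are handled in the same spirit.

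The only divergent diagram is the ``tadpole'' in which two $\eta_n$'s contract via two $C^{(N)}_n$ lines and one $\Ga^N_n$ line. In the chaos-$0$ part of $z_n$ it yields
\[
\bbE\,z_n(t,x)=18\!\int\!\Ga^N_n(t,s,x-y)\,C^{(N)}_n(t,s,x-y)^2\,ds\,dy-m_2\log\la^N-m_3,
\]
and by the short-distance asymptotics $\caC_n(\tau,x)\le Ce^{-cx^2}(\tau+x^2)^{-1/2}1_{[0,2]}(\tau)$, $\caG_n(\tau,x)\le C\tau^{-3/2}e^{-cx^2/\tau}1_{[0,2]}(\tau)$ (see \eqref{eq: cbound},\eqref{eq:caGbound }) the integral is logarithmically divergent in the UV cutoff $\la^N$. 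One fixes $m_2$ as the universal coefficient of this logarithm and $m_3$ to absorb the finite, $\chi$-dependent residue. The \emph{same} tadpole sub-diagram reappears in the chaos-$1$ part of $(\eta_n^2-\rho_{N-n})\Ga^N_n(\eta_n^3-3\rho_{N-n}\eta_n)$, multiplying one free $\eta_n$, so the counterterm $(m_2\log\la^N+m_3)\eta_n$ in \eqref{eq:U_n(0) } cancels it with the \emph{same} universal $m_2$. The constant $m_1$ (and the choice $a=-3\rho$ from \eqref{eq: 1srgconstant}) absorbs the power-like divergence in the linear-$\phi$ term of $v^{(N)}_N$.

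All remaining diagrams are finite by Hölder applied to the kernel-product integrand: each $C^{(N)}_n$ edge contributes $\|\caC_n\|_{p}$ with $p<5$, each $\Ga^N_n$ edge $\|\caG_n\|_{p}$ with $p<5/3$, and each mixed edge $\|\caS_n\|_{p}$ with $p<5/4$; the power-counting exponents balance with room to spare. Convolving the two external vertices with $\tilde K=e^{\dist(\cdot,I)/2}K_1K_2$ gives \eqref{eq: twopoint}; for the bi-local $z_n(t,x,s,y)$ the Feynman integrand carries a factor $\Ga^N_n(t,s,x-y)$ with Gaussian decay in $|x-y|$ and compact support in $|t-s|\le 2$, which combined with the exponential decay of $K_1,K_2$ in $\tilde K$ produces the $e^{-c(|t-s|+|x-y|)}$ factor in \eqref{eq: twopoint2}. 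The cutoff-comparison bounds \eqref{eq: twopoint1},\eqref{eq: twopoint3} follow by a telescoping argument: replace one kernel at a time in each diagram by its primed version and invoke \eqref{eq: cacnnbound} or \eqref{eq: casnbou}, which supply precisely the $\la^{\ga(N-n)}\|\chi-\chi'\|_\infty$ decay. The main obstacle is the clean identification of the single logarithmic divergence across the chaos expansions of $z_n$ and $\omega_n$ and verifying that a single universal $m_2$ simultaneously renormalizes both; once this is done the rest is routine Hölder bookkeeping on Feynman integrals using Lemma \ref{lem: greg}.
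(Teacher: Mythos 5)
Your treatment of the convergent diagrams matches the paper's strategy (Wick decomposition, translation-invariant kernel envelopes, H\"older/Young with the $L^p$ bounds of Lemma \ref{lem: greg}, telescoping in the cutoff for the primed differences), and that part would go through. The genuine gap is in the divergent term. You frame the renormalization as subtracting the logarithmically divergent \emph{number} $18\int \Ga^N_n C_n^2$ from an expectation and fixing $m_2,m_3$ accordingly, after which "the rest is routine H\"older bookkeeping". But what the proof actually needs is an \emph{operator} statement: the chaos-one term $(18C_n^2\Ga^N_n-m_2\log\la^N-m_3)\eta_n$ in \eqref{eq: tildeomega} has variance of the form $\int R\ast R\ast C_n$, so one must show that $R^{(N)}_n=\tilde K\bigl(C_n^2\Ga^N_n-(\beta_2\log\la^{N-n}+\beta_3)\,\mathrm{id}\bigr)$ is bounded, uniformly in $N$, both on $L^\infty$ and from $L^{\infty,1}$ to $L^\infty$, together with the $\|\chi-\chi'\|_\infty$ comparison — this is exactly Proposition \ref{pro: renormalization} and the bounds \eqref{eq:MNbound1 }, and the same operator bound also controls the deterministic part of \eqref{eq: tildez}. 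This cannot be obtained by absolute-value kernel estimates: the kernel $C_n^2\Ga^N_n$ has a critical parabolic $|z|^{-5}$ singularity, so $\sup_t|B(t+\cdot,t,\cdot)|$ is \emph{not} in $L^1$ uniformly in $N$, and after subtracting the multiple of the identity the remainder is still not absolutely integrable near the diagonal; one must exploit cancellation. The paper does this via the fluctuation--dissipation identity \eqref{eq: fldiss}, $\partial_{t'}C=-\hf\Ga+A$, which converts $C^2\Ga$ into a total time derivative of $C^3$ plus regular pieces (the decomposition $B\phi=D\ast\phi+\partial_t(E\phi)+F\phi$), followed by a Fourier analysis of the equal-time kernel $D=12C(t,t,\cdot)^3$ that isolates $\beta_2\log\ep$ and proves $\beta_2$ universal, and a scaling analysis of the $F$-terms with an explicit delta-function extraction. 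None of this is supplied, or even sketched, by your proposal; asserting that a single universal $m_2$ works is precisely the point that has to be proved.

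Two smaller omissions: (i) the fields in \eqref{eq: noisefields} are $\eta_n^2-\rho_{N-n}$ and $\eta_n^3-3\rho_{N-n}\eta_n$, which differ from the true Wick powers by the $t$-dependent correction $\delta_n(t)=\bbE\eta_n^2-\rho_{N-n}$ of Lemma \ref{lem: heatkernel}; the $\delta_n$-terms are singular like $t^{-1/2}$ and must be estimated separately (the paper does this at the end of the "Regular fields" subsection) — routine, but your factorization "$k!\,C^k$" silently assumes they are absent; (ii) for \eqref{eq: twopoint} you only invoke $\caC_n^k\in L^1_{\mathrm{loc}}$, whereas the argument needs the global $L^1$ bound coming from the compact time support and Gaussian spatial decay of $\caC_n$, which is what makes the convolution with $\tilde K$ close.
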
 
\noindent{\it Proof of Proposition} \ref{prop: mainproba}. 
$\rho_n^{(N)}(t,x)^2$ belongs to the
inhomogenous Wiener Chaos of bounded order $m$ (in fact $m\leq 10$). Thus we get for 
 all $p>1$  %there exits a constant $C_p$  s.t. 
 \beq
\bbE\rho^{(N)}_n(t,x)^{2p}\leq (2p-1)^{pm}(\bbE\rho^{(N)}_n(t,x)^2)^p
\label{eq: 2ppoint}
\eeq
(see  \cite{nualart},  page 62). Using H\"older, \eqref{eq: 2ppoint} and \eqref{eq: twopoint} in turn we deduce
\baq
\bbE(\|\rho^{(N)}_n\|_{L^2(c_i)}^{2p})&\leq&\bbE(\|\rho^{(N)}_n\|_{L^{2p}(c_i)}^{2p})
=\int_{c_i} \bbE(\rho^{(N)}_n(t,x))^{2p}\non\\
&\leq& C_p\int_{c_i} ((\bbE\rho^{(N)}_n(t,x))^2)^p
\leq C_p
\label{eq: 2ppoint4}
\eaq
and thus by \eqref{eq:rhozeta}
\beq
\bbE(\|K\tilde\zeta^{(N)}_n\|_{L^2(c_i)}^{2p})\leq C_pe^{-p\dist(i_0,I_n)}
%\label{eq: noisest1}
\non
\eeq
so that
\beq
\bbP(\|K\tilde\zeta^{(N)}_n\|_{L^2(c_i)}\geq R)\leq C_p(R^{-1}e^{-\hf \dist(i_0,I_n)})^{2 p}
\label{eq: noisest1}
\non
\eeq
and finally 
\beq
\bbP(\|\tilde\zeta_n^{(N)}\|_{\caV_n}\geq \la^{-\ga n})\leq \sum_iC_p(\la^{\ga n}e^{-\hf \dist(i_0,I_n)})^{2 p}
\leq
C_p\la^{2\ga pn}\la^{2m}\la^{-5n}.
\label{eq: noisestim2}
%\non
\eeq
 For the bi-local fields we proceed
in the same way. First as in \eqref{eq: 2ppoint4} we deduce
\beq
\bbE(\|\rho^{(N)}_n\|_{L^2(c_i\times c_j)}^{2p})\leq C_p\int_{c_i\times c_j} ((\bbE\rho^{(N)}_n(t,x,s,y)^2)^p
\leq C_p(Ce^{-c\dist(c_i,c_j)} )^{2p}
%\label{eq: 2ppoint4}
\non
\eeq
and then use exponential decay to control
\beq
\bbP(\|\tilde\zeta_n^{(N)}\|_{\caV_n}\geq \la^{-\ga n})\leq \sum_{i,j}C_p(\la^{\ga n}e^{-c( \dist(i_0,I_n)+\dist(c_i,c_j))})^{ 2p}
\leq
C_p\la^{2p\ga n}\la^{2m}\la^{-5n}.
%\label{eq: noisest2}
\non
\eeq

Next we turn to \eqref{eq: amevent2} which we recall we want to hold for all cutoff functions $\chi, \chi'$ with bounded $C^1$ norm. We proceed by a standard Kolmogorov continuity argument.

Let $f(\chi):=K\tilde\zeta^{(N)}_n$. Without loss we may consider $\chi$ in the ball $B_r$ of radius $r$  at origin in
$C^1[0,1]$. As above we conclude from  \eqref{eq: twopoint1} 
\beq
\bbE(\|f(\chi)-f(\chi')\|_{L^2(c_i)}^{2p})
\leq C_p(\ep \|\chi-\chi'\|_\infty e^{-\dist(i_0,I_n)})^p
\label{eq: ffprime}
\eeq
with $\ep=\la^{\ga(N-n)}$.
Let $\chi_n$, $n=1,2,\dots$  be the Fourier coefficients of $\chi$ in the basis $1$, $\sin 2\pi x$, $\cos  2\pi x$, $\sin  4\pi x$, $\cos  4\pi x,\dots$.
Hence  $|\chi_n|\leq Crn^{-1}$. Let $\chi^m_n= \chi_n$ for $n\leq m$ and $\chi^m_n= \chi'_n$ for $n> m$. Let $Q_N$ be the dyadic rationals in $[0,1]$. Then, for $\be\in (0,1)$
\baq
\|f(\chi)-f(\chi')\|_{L^2(c_i)}\leq\sum_{m=1}^\infty\|f(\chi^m)-f(\chi^{m+1})\|_{L^2(c_i)}%\non\\
%&\leq&
\leq\sum_{m=1}^\infty|\chi_m-\chi'_{m}|^\be\sum_{N=0}^\infty
2^{\beta N}\Delta_{N,m}
%\label{eq: ffprime}
\non
\eaq
where
\beq
\Delta_{N,m}=\sup_{t,t'\in Q_N, |t-t'|=2^{-N}}\|g_m(t)-g_m(t')\|_{L^2(c_i)}
%\label{eq: ffprime}
\non
\eeq
and $g_m(t)=f(\chi_1,\dots,\chi_{m-1},t,\chi'_{m+1},\dots)$. Using \eqref{eq: ffprime} for
$g_m$ we get 
\beq
\bbP(\Delta_{N,m}>R)\leq C2^N(R^{-2}\ep 2^{-N}e^{-\dist(i_0,I_n)})^p
%\label{eq: noisestimatt2}
\non
\eeq
and then, taking $\be<\hf$ and $2\be p>1$
\baq
&&\bbP(\sup_{\chi,\chi'\in B_r}\|f(\chi)-f(\chi')\|_{L^2(c_i)}>R)\leq \sum_{m,N}\bbP(\Delta_{N,m}>R|\chi_m-\chi'_{m}|^{-\be}2^{-\be N})\non\\
&& \leq \sum_{m,N}
C2^{N}(R^{-2}\ep 2^{-N(1-2\be)}r^\be e^{-\dist(i_0,I_n)}m^{-2\be})^p\leq C(R^{-2}\ep r^\be e^{-\dist(i_0,I_n)})^p
%\label{eq: noisestimatt2}
\non
\eaq
since $|\chi_m-\chi'_{m}|\leq Crm^{-1}$. We conclude then
\beq
\bbP(\sup_{\chi,\chi'\in B_r}\|\tilde{\zeta'}_n^{(N)}-\tilde\zeta_n^{(N)}\|_{\caV_n}\geq \la^{\hf\ga(N-n)} \la^{-\ga n})%\leq \sum_iC_p(R^{-1}e^{-\hf c\dist(c_i,\Lambda_n)})^{2 p}
\leq
C_pr^{\be p}\la^{p\ga(N-n)}\la^{(2\ga p-5) n}\la^{2m}
.
\label{eq: noisestimatt2}
%\non
\eeq

We still need to deal with the last condition on $\caA_m$ in \eqref{eq: Gammaxibound}.
 By \eqref{eq:Gala } $\zeta:=\Gamma\xi_{n-1}$ is a Gaussian field with covariance
$$
\bbE\zeta(t',x')\zeta(t,x)=%e^{(t'-t)\Delta}
\int_0^tH_n(t'-t+2s,x'-x)
%e^{2s\Delta}
\chi(t'-t+s)\chi(s)ds\label{eq:zetacov}
$$
where $\chi$ is smooth with support in $[\la^2,2]$.
$
\bbE\zeta(t',x')\zeta(t,y)$ is smooth, compactly supported in $t'-t$ and exponentially decaying
in $x-y$. We get then by standard Gaussian estimates \cite{boga}	for $a\leq 2,|\al|\leq 4$ 
$$
 \bbP(\|\partial_t^a\partial_x^\al\Gamma\xi_{n+1}\|_{L^\infty(c_i)}>r)\leq C(\la)e^{-c(\la)r^2}
$$
and thus 
\beq
 \bbP(\|s\Gamma\xi_{n-1}\|_{\Phi_{n}}> \la^{-2\ga n})\leq  C(\la)\la^{2m}\la^{-5n}e^{-c(\la)\la^{-4n}}\label{eq: gaxiest}
\eeq
Combining \eqref{eq: noisestim2}, \eqref{eq: noisestimatt2} and \eqref{eq: gaxiest}
with a  Borel-Cantelli argument gives the claims  \eqref{eq: amevent1} and
\eqref{eq: amevent2}.
\qed

\subsection{Normal ordering}

Since the fields   \eqref{eq: noisefields}  are polynomials in gaussian fields the computation of their covariances is straightforward albeit tedious. To organize the computation it is useful to
express   them  in terms
of "normal ordered" expressions in the field $\eta_n$.  This provides also a transparent way to see why
 the renormalization constant $b$ is needed. We suppress again the superscript ${}^{(N)}$ unless needed for clarity. 
 
Define the "normal ordered" random fields
\beq
%h_n^{(0)}=1, \ \ 
:\eta_n^{}:=\eta_n, \ \ :\eta_n^{2}:=\eta_n^2-\bbE\eta_n^2, \ \ :\eta_n^{3}:=\eta_n^3-3\eta_n\bbE\eta_n^2
\label{eq: hndef}
\eeq
and   (recall Lemma \ref{lem: heatkernel})
 \beq
\delta_n(t):=\bbE(\eta^{(N)}_n(t,x))^2-3\rho_{N-n}.
\label{eq: deltan}
\eeq
Then
\beq
\eta_n^2-\rho_{N-n}=:\eta_n^{2}:+\delta_n,\ \ \eta_n^3-3\rho_{N-n}\eta_n=:\eta_n^{3}:+3\delta_n\eta_n
 %\label{eq: noisefields}
\non
\eeq
In virtue of Lemma \ref{lem: heatkernel} $\delta_n$ terms will turn out to give negligible contribution.

These normal ordered fields have zero mean and    the following covariances:
\beq 
 \bbE :\eta_n^{2}:(t,x):\eta_n^{2}:(s,y)=2C_n(t,s,x,y)^2\label{eq: exp1}
%\non
\eeq
and 
\beq 
  \bbE :\eta_n^{3}:(t,x):\eta_n^{3}:(s,y)%h_n^{(3)}(t,x)h_n^{(3)}(s,y)
 =6C_n(t,s,x,y)^3\label{eq: exp2}.
\eeq
where $C_n$ is defined in \eqref{eq:Cndef }.

Next we process $\omega_n$, $\zeta_n$ and $z_n$: % By \eqref{eq: noisefields}
\baq
\omega_n&=&3:\eta_n^{2}:\Gamma^N_n:\eta_n^{3}:+(m_2\log\la^N+m_3)\eta_n+3\delta_n\Gamma^N_n:\eta_n^{3}:+9
:\eta_n^{2}:\Gamma^N_n\delta_n\eta_n+9\delta_n:\Gamma^N_n\delta_n\eta_n
%\label{eq:omegadefin }
\non
\\
\frz_n&=&6\eta_n\Gamma^N_n:\eta_n^3:+18\eta_n\Gamma^N_n\delta_n\eta_n
%\label{eq:zetadefin }
\non
\\
z_n&=&9
:\eta_n^2:\Gamma^N_n:\eta_n^2:+(m_2\log\la^N+m_3)+9\delta_n\Gamma^N_n:\eta_n^2:+9:\eta_n^2:\Gamma^N_n\delta_n
%\label{eq:zdefin }
\non
\eaq
(Recall that $\omega_n(t,x)$ and $\frz_n(t,x)$ are functions, whereas $z_n(t,x,s,y)$ is a kernel).
Consider first the terms not involving  $\delta_n$, call them $\tilde\omega_n$, $\tilde \frz_n$ and $\tilde z_n$. We normal order $\tilde\omega_n$ and $\tilde z_n$:
\baq
\tilde\omega_n&=&(18C^2_n\Gamma^N_n-m_2\log\la^N-m_3)\eta_n+18:\eta_nC_n\Gamma^N_n\eta_n^{2}:+3:\eta_n^{2}\Gamma^N_n\eta_n^{3}:
\label{eq: tildeomega}
\\
\tilde z_n&=&(18
C^2_n\Gamma^N_n-m_2\log\la^N-m_3)+36:\eta_nC_n\Gamma^N_n\eta_n:+9:\eta_n^2\Gamma^N_n\eta_n^2:
\label{eq: tildez}
\eaq
where the product means point wise multiplication of kernels, e.g. the operator $C^2_n\Gamma^N_n$ has the kernel
\beq
B_n(t,x,s,y):=C_n(t,x,s,y)^2\Gamma^N_n(t,x,s,y).
\label{eq:Bndef }
\eeq
For Proposition  \ref{pro: proba1} it suffices to bound separately the covariances of all the fields in
\eqref{eq: tildeomega} and \eqref{eq: tildez} as well as the $\delta_n$-dependent ones.

\subsection{Regular fields}

We will now prove the claims of Proposition \ref{pro: proba1} for the fields $\zeta_n$ not requiring
renormalization.

It will be convenient to denote the space time points
$(t,x)$ by the symbol $z$. We recall from \eqref{eq: K_1def} and \eqref{eq: K2bounded} the bounds for  the kernel $K(z'-z)$ of the operator $K$ which imply a similar bound for  $\tilde K$ of \eqref{eq:ktildedefi}: 
\beq
0\leq  \tilde K(z',z)\leq Ce^{-\hf |z'-z|}:=\caK(z'-z).
  \label{eq: Ktildenewbound}
\eeq
We start
with the local fields. Let  $\zeta_n(z)$ be one of them and
\beq
\bbE\zeta_n(z_1)\zeta_n(z_2):=
H_n(z_1,z_2)
\non%  \label{eq: kxiesti}
\eeq
Then ($H_n$ is non negative, see below)
\baq
\bbE\rho_n(z)^2&=&\int%_{\La_n\times\La_n} 
\tilde K(z,z_1)\tilde K(z,z_2)H_n(z_1,z_2)dz_1dz_2
\non\\%  \label{eq: twopointmore}
&\leq&C
 \int%_{(\bbR\times\bbT_n)^2}
  e^{-\hf( |z-z_1|+ |z-z_2|)}H_n(z_1,z_2)dz_1dz_2\leq C\|\caH_n\|_1%\non
  \label{eq: ronbound}
\eaq
where
\beq
\caH_n(z):=\sup_{z_1-z_2=z}H_n(z_1,z_2)
\non%  \label{eq: kxiesti}
\eeq
Since $H_n$ is given in terms of products of the covariances $C_n$ and $\Ga_n^N$ we 
get an upper bound for $\caH$ by replacing $C_n$ and $\Ga_n^N$ by the translation invariant upper bounds $\caC_n$ and $\caG_n^N$ defined and bounded  in  Lemma \ref{lem: greg}. Let us proceed case by case.

\vskip 2mm

\noindent (a) $\ \zeta_n=:\eta_n^i:$. Using \eqref{eq: exp1} and \eqref{eq: exp2} we have
$$
\caH_n(z)\leq \caC_n(z)^i
$$
so by Lemma \ref{lem: greg} $\|\caH_n\|_1\leq \|\caC_n\|_i^i<\infty$.
In the same way we get
\beq
\bbE(\rho'%^{(N')}
_n(z)-\rho%^{(N)}
_n(z))^2\leq
%C \int e^{-\hf( |z-z_1|+ |z-z_1|)}\caC_n(z_1-z_2)^{i-1}\caC^{(N)}_n(z_1-z_2)^{}dz_1dz_2\non\\
%&\leq &
C\|\caC_n\|^{i-1}_i\|\caC^{(N)}_n\|_i\leq C\la^{\frac{5-i}{i}(N-n)}\leq C\la^{\frac{2}{3}(N-n)}\non
 % \label{eq: twopoint2}
\eeq

\vskip 2mm

 \noindent (b)  $\ \zeta_n=:\eta_nC_n \Gamma^{(N)}_n\eta_n^2:$.
We have
\beq
\bbE  :\eta_n(z_1)\eta_n(z_2)^2::\eta_n^{}(z_3)\eta_n(z_4)^2:=
2C_n(z_1,z_3)C_n(z_2,z_4)^2+4C_n(z_1,z_4)C_n(z_2,z_3)C_n(z_2,z_4)
%\dots\eta_n(z_k): :\eta_n(z'_1)\dots\eta_n(z'_{k'}):= \delta_{kk'}\sum_{\pi\in S_k} \prod_{i}C_n(z_i,z'_{\pi(i)})%\label{eq: }
%\eqref{eq: cbound} 
\non
\eeq
and so
\baq
\bbE  \zeta_n(z_1) \zeta_n(z_3)&=&\int%_{\La_n\times\La_n} 
(2C_n(z_1,z_3)C_n(z_2,z_4)^2+4C_n(z_1,z_4)C_n(z_2,z_3)C_n(z_2,z_4))\nonumber\\
&\cdot&S(z_1,z_2) S(z_3,z_4)dz_2dz_4
%\dots\eta_n(z_k): :\eta_n(z'_1)\dots\eta_n(z'_{k'}):= \delta_{kk'}\sum_{\pi\in S_k} \prod_{i}C_n(z_i,z'_{\pi(i)})%
\label{eq: contractions} 
%\non
\eaq
where
\beq
S(z_1,z_2):=C_n(z_1,z_2) \Gamma_n(z_1,z_2)\leq\caS_n(z_{12}) 
%\eqref{eq: cbound} 
\non
\eeq
where we use the notation $z_{12}=z_1-z_2$ and $\caS_n$ is defined in \eqref{eq: calSdefin}.  
%\beq S(z_1,z_2) \leq C|t_1-t_2|^{-2}e^{-(x_1-x_2)^2/4(t_1-t_2)}1_{[0,2]}(t_1-t_2):=\caS(z_1-z_2).%\label{eq: }\non\eeq
%Calling the bound in  Lemma \ref{lem: greg} by
%\beq C_n(z_1,z_2)\leq Ce^{-c|x_1-x_2|}(|x_1-x_2|^2+|t_1-t_2|)^{-\hf}1_{[0,2]}(t_1-t_2):=\caC(z_1-z_2).%\label{eq: }\non\eeq
Thus
\baq
0\leq \bbE  \zeta_n(z_1) \zeta_n(z_3)&\leq&
2\caC_n(z_{13})\int%_{(\bbR\times\bbT_n)^2} 
\caS_n(z_{12}) \caS_n(z_{34})
\caC_n(z_{24})^2dz_2dz_4+\non\\
&&4\int%_{(\bbR\times\bbT_n)^2}
 \caS_n(z_{12}) \caS_n(z_{34})
\caC_n(z_{14})\caC_n(z_{23})\caC_n(z_{24})dz_2dz_4\non\\
&:=&\caA(z_{13})+\caB(z_{13})
\label{eq: zeta2bound} 
%\non
\eaq
so that 
\beq
\bbE\rho_n(z)^2\leq %\int_{(\bbR\times\bbT_n)^2} e^{-\hf( |z-z_1|+ |z-z_3|)}(\caA(z_1-z_3)+\caB(z_1-z_3) )dz_1dz_3\leq 
C(\|\caA\|_1+\|\caB\|_1).
 \label{eq: twopoint2}
\eeq
Since $\caA=\caC_n( \caS_n\ast\caS_n\ast\caC_n^2)$ we get by H\"older and Young's inequalities
\beq
 \|\caA\|_1\leq  \|\caC_n\|_p \|\caS_n\ast\caS_n\ast\caC_n^2\|_q\leq  \|\caC_n\|_p \|\caS_n\|_r^2 \|\caC_n^2\|_{s}%=  \|\caS\|_1^2 \|\caC\|_{p}^3
 %\label{eq: }
\non
\eeq
where $2+\frac{1}{q}=\frac{2}{r}+\frac{1}{s}$. We can take e.g. $p=3$, $q=3/2$, $r=1$ and $s=3/2$ and
by Lemma \ref{lem: greg} this is finite. 
As for $\caB$,  we write
\baq
\|\caB\|_1&=&4\int \caS_n(z-z_2) \caS_n(z_4)
\caC_n(z-z_4)\caC_n(z_2)\caC_n(z_{24})dzdz_2dz_4\non\\&=&
(\caS_n\ast(\caC_n(\caS_n\ast
\caC_n))\ast\caC_n)(0)\leq \|\caS_n\|_r\|\caC_n(\caS_n\ast
\caC_n)\|_p\|\caC_n\|_q
%\int  \caS(z_4)\caC_n(z_2)(\caC_n(\caS\ast\caC_n))(z_2-z_4))dz_2dz_4
 %\label{eq: }
\non
\eaq
by Young's inequality with $2=\frac{1}{r}+\frac{1}{p}+\frac{1}{q}$.
Since $\caC_n\in L^a$  for $a<5$ and $\caS_n\in L^b$  for $b<5/4$ we have $\caS_n\ast
\caC_n\in L^c$ for $c<\infty$ and so $\caC_n(\caS_n\ast
\caC_n)\in L^p$ for $p<5$. So we may take e.g. $p=q=2$ and $r=1$.

In the same way, using  \eqref{eq: cacnnbound} and \eqref{eq: casnbou} we obtain
\beq
\bbE(\rho'%^{(N')}
_n(z)-\rho%^{(N)}
_n(z))^2\leq C\la^{\ga(N-n)}\|\chi-\chi'\|_\infty%(\la^{\frac{2}{3}(N-n)}+\la^{\frac{1}{4}(N-n)})\non
 \eeq
 for some $\ga>0$.
 
 \vskip 2mm

 \noindent (c) 
$\zeta_n=:\eta_n^{2}\Gamma^N_n\eta_n^{3}:$. Now
\baq
&&\bbE  :\eta_n(z_1)^2\eta_n(z_2)^3::\eta_n(z_3)^{2}\eta_n(z_4)^3:=
12C_n(z_1,z_3)^2C_n(z_2,z_4)^3\label{eq: threeterms} \\
&&+36C_n(z_1,z_4)^2 C_n(z_2,z_3)^2C_n(z_2,z_4)
+36C_n(z_1,z_3)C_n(z_1,z_4)C_n(z_2,z_3)C_n(z_2,z_4)^2.
\non%\label{eq: threeterms} 
\eaq
The first two terms have the same topology (as Feynman diagrams!) as the $\caA$ and $\caB$ above and we call their
contributions with those names again. Thus
\beq
 \|\caA\|_1\leq  \|\caC_n^2\|_p \|\Ga_n^N\ast\Ga_n^N\ast\caC_n^3\|_q\leq  \|\caC_n^2\|_p \|\Ga_n^N\|_r^2 \|\caC_n^3\|_{s}%=  \|\caS\|_1^2 \|\caC_n\|_{p}^3
 %\label{eq: }
\non
\eeq
where $2+\frac{1}{q}=\frac{2}{r}+\frac{1}{s}$. Now $\Ga_n^N$ is in $L^r$ for $r<5/3$. So we may take for
instance $p=2$, $q=2$, $s=4/3$, $r=8/7$ so that $ \|\caA\|_1\leq C \|\caC_n\|_4^5\|\Ga_n^N\|_{8/7}^2$.
For $\caB$ we get 
$$
\|\caB\|_1=36
(\Ga_n^N\ast(\caC_n(\Ga_n^N\ast
\caC_n^2))\ast\caC_n^2)(0)\leq 36
\|\Ga_n^N\|_r\|\caC_n(\Ga_n^N\ast
\caC_n^2)\|_p\|\caC_n^2\|_q
$$
 with $2=\frac{1}{r}+\frac{1}{p}+\frac{1}{q}$.
Since $\caC_n\in L^a$  for $a<5$ and $\Ga_n^N\in L^b$  for $b<5/4$ we have by Young $\Ga_n^N\ast
\caC_n^2\in L^c$ for $c<5$ and so $\caC_n(\Ga_n^N\ast
\caC_n)\in L^p$ for $p<5/2$. So we may take e.g. $p=q=2$ and $r=1$.

Finally the last term in \eqref{eq: threeterms}, call it $\caD$, is bounded by
\baq
 \|\caD\|_1&\leq& 36\int\caC_n(z) \Ga_n^N(z-z_2) \Ga_n^N(z_4)
\caC_n(z-z_4)\caC_n(z_2)\caC_n(z_{24})^2dzdz_2dz_4\non\\
&:=&\int f(z_2,z_4)g(z_2,z_4)dz_2dz_4
 %\label{eq: }
\non
\eaq
where $f(z_2,z_4)=\int\caC_n(z) \Ga_n^N(z-z_2)
\caC_n(z-z_4)dz$. We have
\beq
\int f(z_2,z_4)dz_2=(\caC_n\ast\caC_n)(z_4)
 \label{eq: fl1}
%\non
\eeq
which is in $L^\infty$ since $\caC_n\in L^p$, $p<5$. Hence
$$
 \|\caD\|_1\leq C\int g(z_2,z_4)dz_2dz_4=C(\caC_n\ast\caC_n^2\ast \Ga_n^N)(0)<\infty
$$
since $ \Ga_n^N\in L^1$ and $\caC_n\ast\caC_n^2\in L^\infty$.

\vskip 2mm

Now we turn to the bi-local fields $\zeta(z_1,z_2)$ and set
\beq
\bbE\zeta(z_1,z_2)\zeta(z_3,z_4):=
H_n(\mathbf z)%(z_1,z_2,z_3,z_4)
\non%  \label{eq: kxiesti}
\eeq
We proceed as in \eqref{eq: ronbound}
\baq
\bbE\rho_n(z',z)^2&\leq& C \int_{(\bbR\times\bbT_n)^4}
  e^{-\hf( |z-z_1|+ |z-z_3|+ |z'-z_2|+ |z'-z_4|)}H_n(\mathbf z)%H_n(z_1,z_2,z_3,z_4)
  d\mathbf z. %\prod_idz_i.\non
  \label{eq: ronboundbi}
\eaq
Let
\beq
Y:=\sup_i\sum_j\sup_{z'\in c_i,z\in c_j} \bbE\rho_n(z',z)^2e^{c|z-z'|}.
\non%  \label{eq: kxiesti}
\eeq
\eqref{eq: twopoint2} follows from  $Y<\infty$. Let
\beq
\caH_n(\mathbf z):=\sup_{u}H_n(z_1+u,z_2+u,z_3+u,z_4+u)e^{\hf(|z_1-z_2|+|z_3-z_3|)}
\non%  \label{eq: kxiesti}
\eeq

We have
\baq
Y&\leq& C\sup_z \int_{(\bbR\times\bbT_n)^4}
  e^{-\hf( |z-z_1|+ |z-z_3|+ |z'-z_2|+ |z'-z_4|+|z_1-z_2|+|z_3-z_3|)}e^{c|z-z'|}\caH_n(\mathbf z)d\mathbf z.\non
 % \label{eq: ronbound}
\eaq
The integrand is actually independent on $z$ as $\caH_n$ is translation invariant:
\beq
\caH_n(\mathbf z)=\tilde\caH_n(z_{14}, z_{24}, z_{34})
\non%  \label{eq: kxiesti}
\eeq
with $\tilde\caH_n(z_1,z_2,z_3)=\caH(z_1,z_2,z_3,0)$. We can then conclude
\beq
Y\leq C\|\tilde\caH_n\|_1
\non%  \label{eq: kxiesti}
\eeq
i.e. we need to show for the various bi-local fields that $\|\tilde\caH_n\|_1<\infty$. Let us again proceed by cases.

\vskip 2mm

 \noindent (d)  $\zeta_n= :\eta_nC_n \Gamma_n^N\eta_n :
 $. We have  
 \beq
e^{\hf(|z_{12}|+|z_{34}|)}\caH_n(\mathbf z))\leq \tilde\caS_n(z_{12})\tilde\caS_n(z_{34})(
\caC_n(z_{13})\caC_n(z_{24})+\caC_n(z_{14})\caC_n(z_{23}))\non
\eeq
where $\tilde\caS_n(z)=e^{c|z|}\caS_n(z)$ so that 
 \beq
\|\tilde\caH_n\|_1\leq 2\|\tilde\caS_n\ast\caC_n\|_2^2<\infty
\non%  \label{eq: kxiesti}
\eeq
 since by Lemma \ref{lem: greg} $\tilde\caS_n$ is in $L^p$, $p<5/4$ and $\caC_n$ is in $L^p$, $p<5$
 so that $\tilde\caS_n\ast\caC_n\in L^p$, $p<\infty$. \eqref{eq: twopoint3} goes in the same way where at least one of the $\caC_n$ or $\caS_n$ is replaced by $\caC_n^{(N)}$ or $\caS_n^{(N)}$.

\vskip 2mm

 \noindent (e)  $\zeta_n= :\eta^2_n \Gamma_n^N\eta^2_n :$. We get
\baq
e^{\hf(|z_{12}|+|z_{34}|)}\caH_n(\mathbf z))&\leq& 4\tilde\Gamma_n^N(z_{12})\tilde\Gamma_n^N(z_{23})(
\caC_n(z_{13})^2\caC_n(z_{24})^2+\caC_n(z_{14})^2\caC_n(z_{23})^2\non\\
&+&\caC_n(z_{13})\caC_n(z_{24})\caC_n(z_{14})\caC_n(z_{23}))\non
\eaq
 The first two terms on the RHS have the same topology as in (d): their contribution to $\|\tilde\caH_n\|_1$
 is bounded by  $C\|\tilde\Gamma_n^N\ast\caC^2_n\|_2^2$ which is finite since $\tilde\Gamma_n^N$ is in $L^1$ and $\caC^2_n$ in $L^2$.
 
 The third term is treated as the analogous one $\caD$ in (c), let us call it  $\caD$ again.
 Again  its $L^1$ norm is given by
 $\|\caD\|_1=\int f(z_2,z_3)g(z_2,z_3)dz_2dz_3$ where
  $f$ is as above and is in $L^\infty$.  
  $g(z_2,z_3)=\caC_n(z_2-z_3)\caC_n(z_2)\Ga_n^N(z_3)$and thus
  $$
 \|\caD\|_1\leq C\int g(z_2,z_4)dz_2dz_4=C(\caC_n\ast\caC_n\ast \Ga_n^N)(0)<\infty.
$$

Let us finally turn to the $\delta_n$ terms in $\omega_n$,  $\zeta_n$
and  $z_n$. Starting with $\omega_n$, and the term $\zeta_n=\delta_n\Gamma^N_n:\eta_n^{3}:$ we have
$$
H_n(z_1,z_2)\leq \delta_n(t_1)\delta_n(t_2)(\Gamma^N_n\ast\caC_n^3\ast\Gamma^N_n)(z_{12}).
$$
The function $g_n=\Gamma^N_n\ast\caC_n^3\ast\Gamma^N_n$ is in $L^p$, $p<5/3$. Letting
$f_n(z_i)=e^{-c|z-z_i|} \delta_n(t_i)$ we see from Lemma \ref{lem: heatkernel}  that $f_n\in L^p$, $p<2$
and so
\beq
\bbE\rho_n(z)^2\leq C(f_n\ast g_n\ast f_n)(0)\non
 % \label{eq: ronbound}
\eeq
is finite by Young. To get the bound  \eqref{eq: twopoint3} we use \eqref{eq: deltaNnbound} to get for $p<2$
$$
\|f'_n%^{(N)}
-f_n%^{(N')}
\|_p\leq C\la^{\frac{2-p}{2p}(N-n)}\|\chi-\chi'\|_\infty.
$$
Consider next the case where $\delta_n$ is on the "other side": $\zeta_n=:\eta_n^{2}:\Gamma^N_n\delta_n\eta_n$. Replacing $C_n$ by the translation invariant upper bound $\caC_n$ we have
$$
H_n(z_1,z_2)\leq \int G_n(z_1,z_2,z_3,z_4)\delta_n(t_3)\delta_n(t_4)dz_3dz_4$$
with 
\beq
G_n(\mathbf z)=\Gamma^N_n(z_{13})\Gamma^N_n(z_{24})(2\caC_n(z_{12})^2\caC_n(z_{34})+4\caC_n(z_{12})\caC_n(z_{13})\caC_n(z_{24})+4\caC_n(z_{12})\caC_n(z_{14})\caC_n(z_{23}))
\non
  %\label{eq: ronbound}
\eeq
At the cost of replacing $\caC_n$ and $\Gamma^N_n$ by $\tilde\caC_n$ and $\tilde\Gamma^N_n$ we 
may replace $\delta_n$ by the $f$ in the upper bound for $\bbE\rho_n(z)^2$:
\beq
\bbE\rho_n(z)^2\leq  \int \tilde G_n(z_1,z_2,z_3,z_4)f(z_3)f(z_4)dz_1dz_2dz_3dz_4=
 \int g_n(z_3-z_4)f(z_3)f(z_4)dz_3dz_4\non
  %\label{eq: ronbound}
\eeq
This is bounded if $g_n$ is in $L^p$, $p>1$ which is now straightforward.

As the last case consider an example of a bi-local field the second last  term in $z_n$: $\zeta_n=\delta_n\Gamma^N_n:\eta_n^2:$. By the now familiar steps
\beq
\bbE\rho_n(z)^2\leq  \int g(z_1-z_2)f(z_1)f(z_2)dz_1dz_2
 \non
  %\label{eq: ronbound}
\eeq
with $g=  \tilde\Gamma^N_n\ast \tilde\caC^2\ast  \tilde\Gamma^N_n $ in $L^p$, $p<5/2$.

\qed
%:proof of prop 6.1. 
\subsection{Renormalization}

We are left with the first terms in \eqref{eq: tildeomega} and \eqref{eq: tildez} that require fixing the renormalization constants $m_2$ and $m_3$. Define (product is of kernels as usual)
 \beq
B^{(N)}_n=
(C^{(N)}_n)^2\Ga^{N}_n
 \label{eq:BNndeff}
\eeq

Let $\caB_\infty$ denote the set of bounded operators $L^\infty(\bbR\times\bbT_n)\to L^\infty(\bbR\times\bbR^3)$ and  $\caB_1$ the ones  $L^{\infty,1}(\bbR\times \bbT_n)\to L^\infty(\bbR\times\bbR^3)$ (here $L^{\infty,1}$ has 
$L^\infty$-norm in $t$ and $L^1$-norm in $x$).
We prove:

\begin{proposition}\label{pro: renormalization} There exist  
 constants $\beta_i$ s.t. the operator 
$$
R^{(N)}_n:=\tilde K(B^{(N)}_n-(\beta_2\log\la^{N-n}+\beta_3) {\ 
 id})
 $$ 
 satisfies
 \beq
\|R^{(N)}_n\|_{\caB_i}\leq C,\ \ \  \|{R'}^{(N)}_n-R^{(N)}_n\|_{\caB_i}\leq C\la^{ (N-n)}\|\chi-\chi'\|_\infty,\ \ \ i=1,\infty
\label{eq:MNbound1 }
\eeq
uniformly in $0\leq n\leq N<\infty$.  $\beta_2$ universal (i.e. independent on $\chi$).
 
\end{proposition}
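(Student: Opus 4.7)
The plan is to isolate the logarithmic divergence of $B^{(N)}_n$ as a scalar counterterm and then bound the remainder uniformly by combining the H\"older regularity of $\tilde K$ with a uniform fractional-moment estimate on $B^{(N)}_n$.

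Since $B^{(N)}_n(z,z') = C^{(N)}_n(z,z')^2\,\Ga^N_n(z,z')$ is translation invariant and non-negative, I would first compute $I^N_n := \int B^{(N)}_n(z)\, dz$ and establish
$$I^N_n \;=\; \beta_2 \log \la^{N-n} + \beta_3 + O(\la^{N-n}),$$
using the pointwise bounds $C^{(N)}_n \leq c(\tau,x)$ from \eqref{eq:ctaudeco } and $\Ga^N_n(\tau,x) \leq C\tau^{-3/2} e^{-cx^2/\tau} 1_{[\la^{2(N-n)},2]}(\tau)$ from \eqref{eq:caGbound }. Spatial integration of $c(\tau,x)^2 \Ga^N_n(\tau,x)$ produces a behaviour $\sim \tau^{-1}$, whose time integral over $\tau \in [\la^{2(N-n)},2]$ yields $\log \la^{N-n}$. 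The coefficient $\beta_2$ is universal because the scale-invariant short-distance contribution is obtained in the limit where the lower cutoff $\chi$ drops out; $\beta_3$ is defined as the finite part and the $O(\la^{N-n})$ error is controlled via the comparison in Lemma \ref{lem: greg}. The constants $m_2, m_3$ in \eqref{eq: tildeomega}--\eqref{eq: tildez} are fixed to match $\beta_2, \beta_3$.

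Next, setting $c := \beta_2 \log \la^{N-n} + \beta_3$ and using translation invariance, one rewrites the kernel of $R^{(N)}_n$ as
$$R^{(N)}_n(z,z') \;=\; \int B^{(N)}_n(\zeta) \bigl[\tilde K(z, z'+\zeta) - \tilde K(z,z')\bigr]\, d\zeta \;+\; (I^N_n - c)\, \tilde K(z,z').$$
The second term is $O(\la^{N-n})\tilde K(z,z')$ by the previous step and contributes a uniformly bounded amount to both $\caB_i$ via \eqref{eq: Ktildenewbound}. For the first term, $K = K_1 K_2$ is H\"older continuous of some exponent $\alpha > 0$ (Lipschitz in time since $K_1$ is, H\"older in space since $(-\Delta+1)^{-2}$ has symbol $(1+|k|^2)^{-2}\in L^1(\bbR^3)$), and this transfers through \eqref{eq: Ktildenewbound} to
$$\bigl|\tilde K(z, z'+\zeta) - \tilde K(z,z')\bigr| \;\leq\; C\,\min(|\zeta|^\alpha, 1)\,e^{-c|z-z'|}.$$
The same pointwise bound as in the first step yields
$$\int |\zeta|^\alpha B^{(N)}_n(\zeta)\, d\zeta \;\leq\; C \int_{\la^{2(N-n)}}^{2} \tau^{\alpha/2 - 1}\, d\tau \;\leq\; C/\alpha,$$
uniformly in $N$ for any $\alpha > 0$: the extra fractional power converts the log divergence into a finite integral.

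Combining these gives the pointwise estimate $|R^{(N)}_n(z, z')| \leq C e^{-c|z-z'|}$ uniformly in $N, n$, from which both the $\caB_\infty$ and $\caB_1$ norm bounds follow by the standard Schur-type kernel criteria (for $\caB_1$ one checks $\sup_z \int dt' \sup_{x'} e^{-c|z-(t',x')|} < \infty$). The cutoff-dependence bound \eqref{eq:MNbound1 } is obtained by applying the same scheme to $B^{(N)}_n - B'^{(N)}_n$, decomposing $B - B' = (C - C')(C + C')\Ga + C'^2(\Ga - \Ga')$ and using the comparison bounds \eqref{eq: cacnnbound} and \eqref{eq: casnbou} together with the fact that $C - C'$ is concentrated in the short-distance cutoff region of width $\sim\la^{2(N-n)}$, which is precisely what produces the extra factor $\la^{N-n}\|\chi-\chi'\|_\infty$. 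The main obstacle is the asymptotic analysis of the first step, specifically extracting the coefficient $\beta_2$ as a genuinely scale-invariant short-distance constant independent of $\chi$; once this universal piece is identified, the remainder of the proof is essentially routine kernel bookkeeping.
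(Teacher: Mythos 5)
Your route is genuinely different from the paper's: the paper derives a fluctuation--dissipation identity $\partial_{t'}C=-\hf\Gamma+A$, writes $B=D\ast{}+\partial_t E+F$ with $D(t,x)=12\,C(t,t,x)^3$, and computes $\hat D(t,p)$ explicitly to extract the universal $\al_0$; you instead subtract the total mass of $B$ and exploit the Lipschitz/H\"older regularity of $\tilde K$ to convert the logarithmic short-scale divergence into a uniformly convergent integral. That Taylor-remainder mechanism is sound in outline and is arguably a cleaner way to get the uniform $\caB_i$ bounds. But there are two genuine gaps. First, $B^{(N)}_n$ is \emph{not} translation invariant in time: $C^{(N)}_n(t',t,\cdot)$ in \eqref{eq:Cndef } carries the $\int_0^t$, so stationarity holds only for $t\gtrsim 2$. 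Consequently ``$I^N_n=\int B$'' is a $t$-dependent quantity, and for small $t$ it differs from $\beta_2\log\la^{N-n}+\beta_3$ by terms that grow like a logarithm of the cutoff (this is exactly the $\log(1+1/\sqrt t)$ showing up in the paper's bound on $d(t,p)$); your asserted pointwise bound $|R^{(N)}_n(z,z')|\leq Ce^{-c|z-z'|}$ then fails near $t'=0$. The scheme can be repaired (define $\beta_2,\beta_3$ via the stationary covariance, cf.\ \eqref{eq:Cndefstat }, and check that the non-stationary region $t\in[0,2]$ contributes only integrable $\log$/power singularities to both norms), but the proposal as written does not see the problem.

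Second, the quantitative heart of the Proposition is asserted rather than proved. The asymptotics $I=\beta_2\log\la^{N-n}+\beta_3+O(\la^{N-n})$ and the $\chi$-independence of $\beta_2$ cannot follow from the \emph{upper} bounds \eqref{eq:ctaudeco } and \eqref{eq:caGbound } alone; upper bounds give the order of the divergence but not its coefficient, its finite part, or the error rate, and it is precisely this computation (the $\ep\partial_\ep$ analysis and the identity exhibiting $\al_0$ as a $\chi$-free scale-invariant integral) that occupies most of the paper's proof. You flag this as ``the main obstacle'', but it is the content of the statement, not a routine step. Similarly, for the second bound in \eqref{eq:MNbound1 } the comparison estimates \eqref{eq: cacnnbound} and \eqref{eq: casnbou} from Lemma \ref{lem: greg} only yield factors $\la^{\ga_p(N-n)}$ with small $\ga_p$, not the full $\la^{N-n}$: to reach that strength you must also subtract the $\chi$-dependent finite part (i.e.\ use $R'-R=\tilde K\bigl(B'-B-(\beta_3'-\beta_3)\,\mathrm{id}\bigr)$) and rerun the Taylor-gain argument on $B'-B$, using that this difference is concentrated at parabolic scale $\la^{N-n}$ so that the Lipschitz increment of $\tilde K$ supplies the missing factor. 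Your sketch gestures at the concentration but does not perform either step, so the claimed rate is not established as written.
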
 

We fix the renormalization constants $m_i=18\beta_i$. This means that e.g. the first term in
\eqref{eq: tildeomega} becomes 
$$
18R^{(N)}_n\eta_n+(m_2\log\la^n+m_3)\eta_n.
$$
The second term once multiplied by $\la^{2n}$ fits into the bound \eqref{eq:Unormbound}.

The $i=\infty$ case of Proposition \ref{pro: renormalization}  takes care of the deterministic term in \eqref{eq: tildez} since
$$
\|h_{n-m}(B^{(N)}_n-\beta_1\log\la^{N-n}+\beta_2)
\phi\|_{\caV_n}\leq \|R^{(N)}_n\|_{\caB_\infty}\|\phi\|_\infty.
$$
and $\|\phi\|_\infty\leq \|\phi\|_{\Phi_n}$. The  $i=1$ case is needed for the first term in \eqref{eq: tildeomega}. Indeed,
we have
\beq
\bbE\rho^{(N)}_n(z)^2= \int (R^{(N)}_n(t,t_1)\ast R^{(N)}_n(t,t_2)\ast C^{(N)}_n(t_1,t_2))(0)dt_1dt_2
%  \label{eq: twopoint5}
\non
\eeq
where  $\ast$ is spatial convolution. We have
\beq
 \sup_{t_1,t_2}\|C^{(N)}_n(t_1,t_2)\|_1<\infty%\label{eq: }
\non
\eeq
so that indeed
\beq
\bbE\rho^{(N)}_n(z)^2\leq C \|R^{(N)}_n\|_{\caB_\infty}\|  \|R^{(N)}_n\|_{\caB_1}.%  \label{eq: twopoint5}
\non
\eeq
Recall also that by \eqref{eq: ronbound} a sufficient condition for $\|\tilde KB\|_{\caB_i}$ to be bounded is
 \beq
 %\caB:=
 \sup_t|B(t+\cdot,t,\cdot)|\in L^1(\bbR\times \bbT_n).
 \label{eq: suffi}
\eeq
%\eqref{eq:MNbound1 } to hold is 

\begin{proof} Let us first
remark that it suffices to work in $\bbR^3$ instead of $\bbT_n$. Indeed,  recall  $B^{(N)}_n=18
(C^{(N)}_n)^2B^{(N)}_n$ where the product is defined as pointwise multiplication of kernels.
Let $\tilde C^{(N)}_n$ and $\tilde \Gamma^{(N)}_n$ be given by \eqref{eq:Cndef } and \eqref{eq: gamman-ndef} where the $\bbT_n$ heat kernels $H_n$ are replaced by the  $\bbR^3$ heat kernel $H$ and similarly let
 Let
 \beq
 \delta B^{(N)}_n(t',t,x):= B^{(N)}_n(t',t,x)-\tilde B^{(N)}_n(t',t,x)
\non
\eeq
 From \eqref{eq:persum } and a similar representation for $\Gamma^{(N)}_n$ 
 we infer that $ C^{(N)}_n-\tilde C^{(N)}_n$ and  $ \Gamma^{(N)}_n-\tilde\Gamma^{(N)}_n$ are in $L^p(\bbR\times\bbT_n)$ for all
 $1\leq p\le\infty$. Proceeding as in Lemma \ref{lem: greg}  
 we then conclude  $\sup_t| \delta B^{(N)}_n(t+\tau,t,x) %B^{(N)}_n(t+\tau,t,x)- \tilde B^{(N)}_n(t+\tau,t,x)
 |$ is in $L^p$ for $p<5/4$ and the analogue of \eqref{eq: casnbou} holds. 
 Hence \eqref{eq: suffi} holds. 
 
For the rest of this Section we work in $\bbR\times\bbR^3$ and fix the UV cutoff $\la^{N-n}:=\ep$ and denote the operators simply by
$C$ and $\Gamma$. Also, we set $\chi_\ep(s):=\chi(s)-\chi(s/\ep^2)$. With these preliminaries 
we will start to work towards  extracting from the operator $B^{(N)}_n$ \eqref{eq:Bndef } the divergent part responsible for the renormalization. First we'll derive  a version of the  fluctuation-dissipation relation relating $C$ and $\Gamma$:
\beq
\partial_{t'}C(t',t)=-\hf \Gamma_{}(t',t)+A(t',t).\label{eq: fldiss}
\eeq
where $A(t',t)$ will give a non singular contribution to $B$.
To derive \eqref{eq: fldiss} write  \eqref{eq:Cndef } in operator form and differentiate  in $t'$:
\beq
\partial_{t'}C(t',t)=%e^{(t'+t)\Delta}\chi_{N-n}(t')\chi_{N-n}(t)+
e^{(t'-t)\Delta}\int_0^t(\Delta+\partial_{t'})e^{2s\Delta}\chi_{\ep}(t'-t+s)\chi_{\ep}(s)%(\chi(s)-\chi(\la^{-2n}s))^2
ds
\non
\eeq
Next, write $\Delta e^{2s\Delta}=\hf\partial_se^{2s\Delta}$ and integrate by parts to get 
\baq
\partial_{t}C(t',t)&=&-\hf e^{(t'+t)\Delta}\chi_{\ep}(t')\chi_{\ep}(t)+
e^{(t'-t)\Delta}\int_0^te^{2s\Delta}(\partial_{t}-\hf\partial_s)(\chi_{\ep}(t'-t+s)\chi_{\ep}(s))
ds\non\\
&:=&a_0(t',t)+\tilde a(t',t)\non
\eaq
Now recall that $\chi_{\ep}(s)=\chi(s)-\chi(\ep^{-2}s)$ and write denoting $\tau=t'-t$:
\beq
%&&(\partial_{t}+\hf\partial_s)(\chi_{N-n}(t'-t+s)\chi_{N-n}(s))=\hf(-\chi'_{N-n}(t'-t+s)\chi_{N-n}(s)+\chi_{N-n}(t'-t+s)\chi'(s))\non\\
%&&-\hf\chi_{N-n}(t'-t+s)\partial_s\chi(\la^{-2(N-n)}s):=\rho_{N-n}(t'-t,s)-\hf\chi_{N-n}(t'-t+s)\partial_s\chi(\la^{-2(N-n)}s)\non
(\partial_{t'}-\hf\partial_s)(\chi_{\ep}(t'-t+s)\chi_{\ep}(s))=\rho_1(\tau,s)+\rho_2(\tau,s)+\rho_3(\tau,s)
\eeq
with
\baq
\rho_1(\tau,s)&=&\hf(-\chi_{\ep}(\tau+s)\chi'(s)+\chi'(\tau+s)\chi_{\ep}(s))
\non\\
\rho_2(\tau,s)&=&-\hf\chi_{\ep}(s)\partial_s\chi(\ep^{-2}(\tau+s))
\non\\
\rho_3(\tau,s)&=&\hf\chi_{\ep}(\tau+s)\partial_s\chi(\ep^{-2}s)
\non
\eaq
and correspondingly 
$$\tilde a(t',t)=a_1(t',t)+a_2(t',t)+a_3(t',t).$$
$\rho_1$ localizes $s$-integral to $s>\caO(1)$ and gives a smooth contribution as $\ep\to 0$.
$d\mu(s):=-\partial_s\chi(\ep^{-2}s)ds$ is a
probability measure supported on $[\ep^{2},2\ep^{2}]$ so $\rho_2$ localizes $s$ and $\tau$
to $\caO(\ep^{2})$. % and will be bounded  as $\ep\to 0$. 
 The main term comes from $\rho_3$
\baq
%\partial_{t}C(t',t)=
a_3(t',t)&=&-\hf
e^{\tau\Delta}\int_0^te^{2s\Delta}\chi_{\ep}(\tau+s)d\mu(s)
\non\\
&=&-\hf \Gamma(t',t)+a_4(t',t)
\eaq
where
\beq
a_4(t',t)=\hf
e^{\tau\Delta}\int_0^t(\chi_{\ep}(\tau)-e^{2s\Delta}\chi_{\ep}(\tau+s))d\mu(s)%\tilde A_n(t',t)=%\hf e^{(t'+t)\Delta}\chi_{N-n}(t')\chi_{N-n}(t)
%e^{(t'-t)\Delta}\int_0^te^{2s\Delta}\rho_n(t'-t,s)
\label{eq:a4definition }%\non
\eeq
\eqref{eq: fldiss} follows then with
\beq
A(t',t)=a_0(t',t)+a_1(t',t)+a_2(t',t)+a_4(t',t).\label{eq:Andefinition }
%\non
\eeq
Inserting  \eqref{eq: fldiss} into 
 \eqref{eq:Bndef } and using
 \beq
 \int dy\int_0^tds\partial_tC(t,s,x-y)^{ 3} \phi(s,y)= \partial_t\int dy\int_0^tdsC(t,s,x-y)^{ 3} \phi(s,y)- \int C(t,t,x-y)^{ 3}\phi(t,y)dy
%\label{eq: }
\non
\eeq
we obtain
\beq
(B\phi)(t)=D(t)\ast \phi(t)% \int C(t,t,x-y)^{ 3}\phi(t,y)dy
+\partial_t(E\phi)(t)%)(t,x).%\non
+(F\phi)(t)
\label{eq: bb'deco}
\eeq 
with
\baq
D(t,x)&=&12C(t,t,x)^{ 3}
%\label{eq: }
\non\\
(E\phi)(t,x)&=&-12 \partial_t\int dy\int_0^tdsC(t,s,x-y)^{ 3} \phi(s,y)\non\\
(F\phi)(t,x)&=&-36 \int dy\int_0^tdsA(t,s,x-y)C(t,s,x-y)^{ 2} \phi(s,y).\non
\eaq
%\beq(B'\phi)(t,x)=-12 \partial_t\int dy\int_0^tdsC(t,s,x-y)^{ 3} \phi(s,y)-36 \int dy\int_0^tdsA(t,s,x-y)C(t,s,x-y)^{ 2} \phi(s,y).\non\eeq 
We estimate these three operators in turn. 
%:D estimate
\vskip 2mm
%:(a)

\noindent (a) $D$. %We start with the  convolution operator in \eqref{eq: bb'deco}. 
%By the relation \eqref{eq:persum } we may work on $x\in\bbR^3$.
The Fourier transform of $D$ in $x$ is given by
\beq
\hat D(t,p)=\int \hat C(t,t,p+k) \hat C(t,t,k+q) \hat C(t,t,q)dkdq 
\label{eq:ftofD }
%\non
\eeq
where %$\int_ndkdq$ denotes the Riemann sum in $k,q\in (2\pi \epsilon_n\bbZ)^3$ and
\beq
 \hat C(t,t,q)=\int_0^te^{-sq^2}\chi_\ep(s)^2ds%-\chi(%\la^{-2(N-n)}
 %s/\ep^2))^2ds%,\ \ \ \la^{N-n}\equiv\epsilon%:=c_n(t,q,\chi)
% \leq \int_{\la^{2n}}^2e^{-sq^2}ds=q^{-2}(e^{-\la^{2n}q^2}-e^{-2q^2}).
\label{eq: cnestim1}
%\non
\eeq
%and in this section we denote for simplicity $\la^{N-n}\equiv\epsilon$.
%Using  $1_{[2%\ep^{2}
%\ep^2, 1]}(s)\leq \chi(s)-\chi(%\la^{-2(N-n)}
%s\ep^2)\leq 1_{[\ep^2%\ep^{2}
%, 2]}(s)$
%\beq
 %q^{-2}(e^{-2\ep^{2}q^2}-e^{-t\wedge 1q^2})1_{t\geq 2\ep^{2}}\leq  \hat C_n(t,t,q)%c_n(t,q,\chi)
% \leq \int_{\la^{2n}}^2e^{-sq^2}ds=
%\leq q^{-2}(e^{-\ep^{2}q^2}-e^{-t\wedge 2q^2})1_{t\geq \ep^{2}}.
%\label{eq: cnestim2}
%\non
%\eeq
The  integral  in \eqref{eq:ftofD } diverges at $p=0$ logarithmically  as
$\ep\to 0$. Doing the gaussian integrals over $k,q$ we have 
\beq
\hat D(t,p)=(4\pi)^{-3}\int_{[0,t]^3}e^{-\al(\bss) p^2}%d(s_1,s_2,s_3)
d(\bss)^{-3/2}\prod_{i=1}^3\chi_\ep(s_i)^2ds_i
%\label{eq: }
\non
\eeq
where $\al(\bss):=-\frac{s_1s_2s_3}{d(\bss)}$ and $d(\bss):=s_1s_2+s_1s_3+s_2s_3$. Let us
study the cutoff dependence of $\hat D$. First by
 differentiating and changing variables
\beq
\ep\partial_\ep\hat D(t,p)=-\frac{3}{32\pi^3}\int_{[0,t/\ep^2]^3}
e^{-\al(\bss)(\ep p)^2}d(\bss)^{-3/2}s_1\partial_{s_1}\chi(s_1)^2ds_1
\prod_{i=2}^3(\chi(\ep^2s_i)^2-\chi(s_i)^2)ds_i.
\label{eq: epdep}
%\non
\eeq
Let
\beq
\al_\ep(t,p)
:=-\frac{3}{32\pi^3}\int_{[0,t/\ep^2]^3}
e^{-\al(\bss)(\ep p)^2}d(\bss)^{-3/2}s_1\partial_{s_1}\chi(s_1)^2ds_1
\prod_{i=2}^3(1-\chi(s_i)^2)ds_i.
\label{eq: epdep}
%\non
\eeq 
%$\al_\ep(t,p)$ be given by \eqref{eq: epdep} where $\chi(\ep^2s_i)^2$ is replaced by $1$ 
and set
%\beq
$\tilde\al_\ep(t,p)=\ep\partial_\ep\hat D(t,p)-\al_\ep(t,p)$.
%\label{eq: epdep}
%\non
%\eeq
Since the $s_1$ integral is supported on $[1,2]$ and the others on $s_i\geq 1$ we have
$
d(\bss)\asymp s_2s_3,\ \ \ \al(\bss)\asymp 1
$
which leads to
\beq
|\tilde\al_\ep(t,p)|\leq C\int_{\bbR_+^3}(s_2s_3)^{-3/2}1_{[1,2]}(s_1)1_{[\ep^{-2},\infty)}(s_2)
1_{[1,\infty)}(s_3)\leq C\ep.
\label{eq: epdep1a}
%\non
\eeq
%since $1-\chi(\ep^2s_i)^2$  is supported on $[\ep^{-2},\infty]$.
Furthermore, let $\tilde\al'_\ep(t,p)$ be gotten by replacing the lower cutoffs $\chi(s_i)$
by  another one $\chi'(s_i)$. Then
\beq
|\tilde\al'_\ep(t,p)-\tilde\al_\ep(t,p)|\leq C\ep\|\chi-\chi'\|_\infty.
\label{eq: epdep1b}
%\non
\eeq
Since $\tilde\al_0(t,p)=0$ we get that $\int_0^\ep\tilde\al_{\ep'}(t,p)\frac{d\ep'}{\ep'}$ satifies 
\eqref{eq: epdep1a} and \eqref{eq: epdep1b} as well.
Thus all the divergences come from $\al_\ep(t,p)$. Note that $\al_0(t,p)=\al_0$ is independent on $t$ and $p$. Set
 $a_\ep(t,p)=\al_\ep(t,p)-\al_0$.
We get
\beq
|a_\ep(t,p)|\leq C((\ep^2p^2+\ep/\sqrt{t})\wedge 1)
\label{eq: epdep1aa}\\
%\non
%|a'_\ep(t,p)-a_\ep(t,p)|&\leq &C((\ep^2p^2+\ep/\sqrt{t})\wedge 1)\|\chi-\chi'\|_\infty
%\label{eq: epdep1aaa}
%\non
\eeq
We fix the renormalization constant $\beta_2=\al_0$ and define
\beq
d(t,p):=\hat D(t,p)-\al_0\log\ep.
\label{eq: smallddef}
%\non
\eeq
Combining above we get
\beq
|d(t,p)|=|\int_\ep^1(\tilde\al_{\ep'}(t,p)+a_{\ep'}(t,p))\frac{d\ep'}{\ep'}|\leq C(1+\log(1+p^2+1/\sqrt{t})).
\label{eq: smallddef}
%\non
\eeq
Next we write
\baq
\al_\ep(t,p)&=&-\frac{1}{32\pi^3}\int_{[0,t/\ep^2]^3}
e^{-\al(\bss)(\ep p)^2}d(\bss)^{-3/2}\sum_is_i\partial_{s_i}
\prod_{i=1}^3(1-\chi(s_i)^2)ds_i
%\label{eq: }
\label{eq: ipp}\\
&=&-\frac{1}{32\pi^3}\int_0^\infty d\la
\int_{[0,t/\la\ep^2]^3}\delta(\sum_{i=1}^3s_i-1)e^{-\la\al(\bss)(\ep p)^2}d(\bss)^{-3/2}
\partial_\la
\prod_{i=1}^3(1-\chi(\la s_i)^2)ds_i
\non
\eaq
At $\ep=0$ this implies after an integration by parts
\beq
\al_0
=
-\frac{1}{32\pi^3}\int_{\bbR_+^3}\delta(\sum_{i=1}^3s_i-1)d(\bss)^{-3/2}%(s_1s_2+s_1s_3+s_2s_3)^{-3/2}
\prod_{i=1}^3ds_i.
\non
\eeq
i.e.  $\al_0$ is {\it universal} (i.e. independent of  $\chi$). Finally let us vary the cutoff. Replace 
$\chi$ by $\chi_\sigma=\sigma\chi+(1-\sigma)\chi'$. Since $\partial_\sigma a_\ep=\partial_\sigma (a_\ep-a_0)$
we get from \eqref{eq: ipp} 
\baq
\partial_\sigma\al_\ep(t,p)
&=&%-\frac{1}{32\pi^3}
\int_0^\infty d\la
\int_{[0,t/\la\ep^2]^3}%\delta(\sum_{i=1}^3s_i-1)
\al(\bss)(\ep p)^2e^{-\la\al(\bss)(\ep p)^2}d(\bss)^{-3/2}
d\mu(\bss)\non\\
&+&
\sum_{i=1}^3\int_0^\infty d\la(t/\la^2\ep^2)
\int_{[0,t/\la\ep^2]^2}%\delta(\sum_{i=1}^3s_i-1)
e^{-\la\al(\bss)(\ep p)^2}d(\bss)^{-3/2}
d\mu(\bss)|_{s_i=t/\la\ep^2}
%\prod_{i=1}^3\chi(\la s_i)^2ds_i%\non
\label{eq: ipp1}
\eaq
where 
$$
d\mu(\bss)=\frac{3}{16\pi^3}\delta(\sum_{i=1}^3s_i-1)(\chi_\sigma(\la s_1)-\chi'_\sigma(\la s_1))
\chi_\sigma(\la s_1)\prod_{i=2}^3(1-\chi_\sigma(\la s_i)^2).
$$
Start with the first term in \eqref{eq: ipp1}.  Since $\la s_i\geq 1$ the $\la$-integral is supported in
$\la\geq 3$. On the support of $\chi_\sigma(\la s_1)-\chi'_\sigma(\la s_1)$
$s_1\in [\la^{-1},2\la^{-1}]$. By symmetry we may assume $s_2\leq s_3$ and then $s_3\geq 1/6$.
Hence in the support of the $\chi$'s $\al\asymp s_1$ and $d\asymp s_2$. We get the bound
$$
C(\ep p)^2e^{-(\ep p)^2}\int_{1/3}^\infty d\la\int_{1/\la}^{2/\la}ds_1\int_{1/\la}^{1}ds_2s_2^{-3/2}\ep\|\chi-\chi'\|_\infty\leq C(\ep p)^2e^{-(\ep p)^2}\|\chi-\chi'\|_\infty.
$$
For the second term, if $i=1$ then $s_1=t/\la\ep^2\in [1/\la,2/\la]$ implies $t\in [\ep^2,2\ep^2]$.
Again $\al\asymp s_1$, $d\asymp s_2\leq s_3$ and we end up with the bound
$$
C1(t\in [\ep^2,2\ep^2])\|\chi-\chi'\|_\infty.
$$
If $i\neq 1$ the same bound results. We may summarize this discussion in
\beq
|d(t,p)-d'(t,p)|\leq C(\ep+1(t\in [\ep^2,2\ep^2])\|\chi-\chi'\|_\infty.
\label{eq: smallddef}
%\non
\eeq
	The operator $K(t',t)d(t)$ acts as a Fourier multiplier with $ \hf e^{-|t'-t|}(p^2+1)^{-2}d(t,p)$.
Since $d(t,p)$ is analytic in a strip $|\Im p|\leq c$ we get in $x$-space from the
above bounds
\baq
|(K(t',t)d(t))(x)|&\leq& Ce^{-|t'-t|-c|x|}(1+\log(1+t^{-\hf}))
%\log(2+(\ep^2+t)^{-1})%(1+ 1_{t\in[\ep^2,1]}\log 1/t)
\non\\
|(K(t',t)(d'(t)-d(t)))(x)|&\leq &Ce^{-|t'-t|-c|x|}(\ep +1(t\in [\ep^2,2\ep^2]))\|\chi-\chi'\|_\infty.
\non
\eaq
Hence
\beq
|(\tilde Kd\phi)(t',x)|\leq \int e^{-\hf|t'-t|-c|x-y|}(1+\log(1+t^{-\hf}))\phi(t,y)dtdy
\non
\eeq
which is in $L^\infty$ for $\phi\in L^\infty\times L^\infty$ and for $\phi\in L^\infty\times L^1$ as well. This gives the first bound in  \eqref{eq:MNbound1 }. The second is similar.
%$$|(K(d-d')1_{I_n}\phi)(t,x)|\leq  C\ep e^{-\dist(t,I_n)}\|\phi\|_\infty$$
%which implies the claim of the Proposition.
\vskip 2mm
%:(b)
\noindent (b) $E$. We have $E\phi=\partial_tC^3\phi$ where $C^3(t,s,x):=C(t,s,x)^3$. Thus
integrating by parts %So recalling \eqref{eq: Kdef} and that $K_1$ has kernel $\hf e^{-|t'-t]}$ we get
\beq
Kh_{n-m}E\phi=K\partial_th_{n-m}C^3\phi+\partial_tKh_{n-m}C^3\phi.
\label{eq: deccom}
%\non
\eeq
By  \eqref{eq: K_1def} $|\partial_tK(z)|=K(z)$ so we may use   \eqref{eq: Ktildenewbound}
for the second term as well to get
$$
\|\tilde Kh_{n-m}E\phi\|_\infty\leq C\|\caK\ast\caC^3\ast\phi\|_\infty.
$$
By Lemma \ref{lem: greg}  $\caC^3$ is in $L^1(\bbR\times\bbT_n)$ and hence
$\caK\ast\caC^3$ is in $L^1(\bbR\times\bbT_n)$ and in $L^1(\bbR)\times L^\infty(\bbT_n)$
so that the first estimate of  \eqref{eq:MNbound1 } follows. The second is similar.

\vskip 2mm

%:(c)
\noindent (c) $F$.   By \eqref{eq:Andefinition } $F$  has four contributions, call them
$F_0,F_1,F_2, F_4$. Start with $F_1$. Since $\rho_1$ is supported in $s\geq 1$ the kernel
is bounded (in fact smooth)
$$
|a_1(t',t,x)|\leq Ce^{-c|x]}.
$$
and so by \eqref{eq: cbound} we get
$$
|F_1(t+\tau,t,x)|\leq Ce^{-c|x]}(x^2+\tau)^{-1}.
$$
Hence \eqref{eq: suffi} holds for $F_1$
uniformly in $\ep$ and the first bound in  \eqref{eq:MNbound1 } follows. For the second one we proceed as in Lemma \ref{lem: greg} to get for  $f_1(\tau,x):=\sup_t|F_1(t+\tau,t,x)-F'_1(t+\tau,t,x)|$ that 
%\beq
$\|f_1\|_1%b_1^{(N')}-b_1^{(N)}\|_p^p
\leq C\ep^{\ga}$
for some $\ga>0$.%\la^{(5/2-p)(N-n)}
%\label{eq: boneest}
%\non
%\eeq
%Let $\tilde K(t,x)=e^{\hf|t|}K(t,x)$. Then
%\beq
%|(K(F_1-F'_1)1_{I_n}\phi)(t,x)|\leq Ce^{-\hf |t-\tau_n|}\|\tilde K\ast b_1\ast \phi\|_\infty
%\leq Ce^{-\hf |t-\tau_n|}\|\tilde K\|_1\|b_1\|_1\|\phi\|_\infty
%\label{eq: }
%\non
%\eeq
%which combined with \eqref{eq: boneest} gives the claim.

 Consider next $F_2=-36a_2C^2$.  We show:
 \beq
F_2(t',t,x)=\ep^{-5}p((t'-t)/\ep^2, x/\ep)
%\frac{\tau}{\ep^2},\frac{x}{\ep})
+r(t',t,x)
\label{eq: Ftwo2deco}
%\non
\eeq
where 
\beq
|p(\tau,x)|\leq Ce^{-cx^2}1_{\tau\leq 2}%t\wedge 2.
\label{eq: pikkup}
%\non
\eeq
and $r$ satisfies  \eqref{eq:MNbound1 }. To derive \eqref{eq: Ftwo2deco} we note that by
a change of variables
 %\eqref{eq:cMtaux }
 \beq
a_2(t+\tau,t,x)=-\hf \int_0^{t/\ep^2}H(\tau+2\ep^2s,x)(1-\chi(s))\partial_s\chi(\tau/\ep^2+s)ds\non%\\&=&-\hf \ep^{-3}\int_0^{t/\ep^2}H_N(\tau/\ep^2+s,x/\ep)(1-\chi(s))\chi'(\tau/\ep^2+s)ds:= \ep^{-3}\alpha_N(\frac{\tau}{\ep^2},\frac{x}{\ep},\frac{t}{\ep^2})\non
%(4\pi (\tau+s))^{-3/2}e^{-\frac{x^2}{4(\tau+s)}}ds1_{\tau\in[0,2\ep^{2} ]}+C\ep^{2}e^{-c|x]}.
%\int H_n(\tau+2s,x)%(4\pi (\tau+2s))^{-3/2}e^{-\frac{x^2}{4(\tau+2s)}}
%d\mu(\tau+s)%+C\ep^{2}e^{-c|x]}
%\leq C\la^{-3(N-n)}e^{-c|x|/\la^{N-n}}1_{\tau\in[\ep^{2} ,2\ep^{2} ]} .\non
\eeq
where we also noted since $\partial_s\chi$ is supported on $[1,2]$  $\chi(\ep^2s)=1$. Using
 scaling property of the heat kernel $H(\tau+\ep^2s,x)=\ep^{-3}H(\tau/\ep^2+2s,x/\ep)$ we get then
\beq
a_2(t+\tau,t,x)=%-\hf \ep^{-3}\int_0^{t/\ep^2}H(\tau/\ep^2+s,x/\ep)(1-\chi(s))\chi'(\tau/\ep^2+s)ds:=
 \ep^{-3}
\alpha(\frac{\tau}{\ep^2},\frac{t}{\ep^2},\frac{x}{\ep})\non
\eeq
with 
\beq
\alpha(\tau,t,x)
=-\hf \int_0^{t}H(\tau+s,x)(1-\chi(s))\partial_s\chi(\tau+s)ds.\label{eq: alphantau}
\eeq
 Note that $\al$ depends on $t$ only on $t\leq 2$ and is bounded by %supported on $\tau\leq 2 $. We have
\beq
|\alpha(\tau,t,x)|\leq Ce^{-cx^2}1_{\tau\leq 2},\ \ \ |\alpha(\tau,t,x)-\alpha(\tau,\infty,x|\leq Ce^{-cx^2}1_{\tau\leq 2}1_{t\leq 2}.%t\wedge 2.
%\label{eq: }
\non
\eeq
Comparing two lower cutoffs $\chi$ and $\chi'$ we get 
\beq
|\alpha(\tau,t,x)-\alpha'(\tau,t,x|\leq Ce^{-cx^2}1_{\tau\leq 2}\|\chi-\chi'\|_\infty.%t\wedge 2.
%\label{eq: }
\non
\eeq
%We need to consider $\beta_2(t',t,x)=a_2(t',t,x)C^{(N)}(t',t,x)$ i.e. $C^{(N)}$ enters only 
%for $\tau=\caO(\ep^2), |x|=\caO(\ep)$. 
By similar manipulations we obtain
\beq
C(t+\tau,t,x)^2%=\ep ^{-1}\int_0^{t/\ep^2}H(\tau/\ep^2+s,x/\ep)(\chi(\ep^2(\tau+s))-\chi(\tau+s))(\chi(\ep^2s)-\chi(s))ds\non\\
=\ep^{-2}c(\frac{\tau}{\ep^2},\frac{t}{\ep^2}, \frac{x}{\ep};\ep)^2
%\label{eq: }
\non
\eeq
where
\beq
c(\tau,t,x;\ep)=\int_0^{t}H(\tau+2s,x)(\chi(\ep^2(\tau+s))-\chi(\tau+s))(\chi(\ep^2s)-\chi(s))ds.
%\label{eq: }
\non
\eeq
  %since only $\tau\leq 2$ need be considered and
Since $H(\tau+2s,x)\leq C(1+s)^{-3/2}$ on support of the integrand we get
 \beq
 |c(\tau,t,x;\ep)|\leq C(1+|x|)^{-1}
 %\label{eq: }
\non
\eeq
 and
\beq
|c(\tau,t,x;\ep)-c(\tau,\infty,x;0)|\leq C(\ep(1+\ep |x|)^{-1}+(1+|x|+t)^{-\hf})
%|c(\tau,x,t;\ep)-c(\tau,x,\infty;0)|\leq C(\ep+(1+t)^{-\hf}).
%\label{eq: }
\non
\eeq
with an extra $\|\chi-\chi'\|_\infty$ factor if we compare two lower cutoffs.
%Furthermore, comparing two lower cutoffs
%\beq
%|c(\tau,t,x;\ep)-c'(\tau,t,x;\ep)|\leq C\|\chi-\chi'\|_\infty.
%|c(\tau,x,t;\ep)-c(\tau,x,\infty;0)|\leq C(\ep+(1+t)^{-\hf}).
%\label{eq: }
%\non
%\eeq
\eqref{eq: Ftwo2deco} follows with
$$
p(\tau,x)=
-36c(\tau,x,\infty,0)^2\alpha(\tau,\infty, x).
$$ 
The error term satisfies
\beq
 |r(t+\tau,t,x)|\leq C\ep^{-5}(\ep +(1+t/\ep^2)^{-\hf}+1_{t\leq 2\ep^2})e^{-cx^2/\ep^2}1_{\tau\leq 2\ep^2}
\label{eq:rmiIinus r' }
%\non
\eeq
with an extra $\|\chi-\chi'\|_\infty$ factor if we compare two lower cutoffs. Hence
\beq
|(\tilde Kr\phi)(t',x')|\leq \int e^{-c|t'-\tau-t|+|x'-x|}|r(t+\tau,t,x-y)|\phi(t,y)|d\tau dx\leq C\ep\|\phi\|  %\label{eq: }
\non
\eeq
both in the norm  $L^\infty\times L^\infty$ and in $L^\infty\times L^1$. This and similar statement with   $\|\chi-\chi'\|_\infty$  gives \eqref{eq:MNbound1 }. 

%Call the first term in \eqref{eq: Ftwo2deco} 
Let  $\tilde p$ be the operator with the kernel $\ep^{-5}p((t'-t)/\ep^2, (x'-x)/\ep)-p_0\delta(z'-z)$
where  $p_0=\int p(z)dz$. Then
\beq
(Kh_{n-m}\tilde p)(z+v,z)=\int (K(v-u_\ep)h(z+u_\ep)-K(v)h_{n-m}(z))p(u)du.
\label{eq:khtildep }
%\non
\eeq
where $u_\ep=(\ep^2u_0,\ep\bsu)$. The bound \eqref{eq: pikkup}  then implies  $\|\tilde K\tilde p\phi\|_\infty\leq C\ep\|\phi\|$ in
both norms. A similar statement holds  with   $\|\chi-\chi'\|_\infty$. $p_0$ contributes the the renormalization constant $m_2$.
\vskip 2mm

The analysis of $F_4=-36a_4C^2$ parallels that of $F_2$ so we are brief:
 \beq
F_2(t',t,x)=\ep^{-5}q((t'-t)/\ep^2, x/\ep)
%\frac{\tau}{\ep^2},\frac{x}{\ep})
+s(t',t,x)
\label{eq: Ftwo2deco}
%\non
\eeq
where $s$ satisfies  \eqref{eq:MNbound1 } and
$$
q(\tau,x)=
-36c(\tau,x,\infty,0)^2a(\tau,x)
$$
with
$$
a(\tau,x)=
\int(H(\tau,x)(1-\chi(\tau))-H(\tau+2s,x)(1-\chi(\tau+s))\partial_s\chi(s)ds.
$$
Since $\partial_s\chi$ is supported on $[1,2]$  and $1-\chi$ on $\tau\geq 1$ we get $|a(\tau,x)|\leq 
(1+\tau)^{-5/2}e^{-cx^2/\tau}$. Since $c(\tau,x,\infty,0)\leq C(1+|x|)^{-1}$ we end up with
$$
|q(\tau,x|\leq C(1+\tau)^{-5/2}(1+|x|)^{-2}e^{-cx^2/\tau}.
$$
We may now proceed as in \eqref{eq:khtildep }.
\vskip 2mm
The analysis of the term $F_0$ proceeds along similar lines and is omitted.

\end{proof}

%:biblio

\end{document}